\theoremstyle{plain}
\newtheorem{theorem}{Theorem}[section]
\newtheorem{corollary}[theorem]{Corollary}
\newtheorem{lemma}[theorem]{Lemma}
\newtheorem{proposition}[theorem]{Proposition}
\theoremstyle{definition}
\newtheorem{definition}[theorem]{Definition}
\theoremstyle{remark}
\DeclareMathOperator{\Ima}{Im}
\DeclareRobustCommand{\looongrightarrow}{%
\DOTSB\relbar\joinrel\relbar\joinrel\relbar\joinrel\rightarrow
}
\DeclareRobustCommand{\looongmapsto}{\DOTSB\mapstochar\looongrightarrow}
\title[Systolic inequalities for odd-symplectic forms]{On the $C^2$-local systolic optimality of Zoll odd-symplectic forms}
\author{Samanyu Sanjay}
\address{Chair for Geometry and Analysis, RWTH Aachen University, Germany}
\email{sanjay@mathga.rwth-aachen.de}
\keywords{Hamiltonian dynamics, symplectic geometry, Zoll flows, systolic inequalities}
\subjclass[2020]{37J45, 53D35, 53D10, 37C27}
\begin{document}

\begin{abstract}
An odd-symplectic form is a closed and maximally non-degenerate $2$-form on a compact odd-dimensional manifold. It describes the dynamics of an autonomous Hamiltonian system on a regular energy level. It is called Zoll if the induced dynamics is a free circle action, up to a global time reparameterization. This article establishes a normal form theorem for odd-symplectic forms close to a Zoll one and cohomologous to it, which is then used to prove that Zoll odd-symplectic forms are the local maximizers of the associated systolic ratio. This generalizes the known systolic optimality of Zoll contact forms in the $C^3$-topology. As an application, local systolic inequalities are established in symplectic manifolds for hypersurfaces close to Zoll ones. In particular, this applies to certain non-exact twisted cotangent bundles of manifolds of dimension greater than two.
\end{abstract}

\maketitle

\section{Introduction}
\subsection{Odd-symplectic forms}
Given a connected symplectic manifold $(Q,\omega)$, a smooth real-valued function $H \in C^{\infty}(Q)$ is called an \emph{autonomous Hamiltonian} on $W$, and it defines a Hamiltonian vector field $X_H$ on $Q$. This Hamiltonian vector field is defined to be the unique solution to the following differential equation:
\begin{align}
    \iota_{X_H}\omega =dH
\end{align}
When the dimension of $Q$ is at least four, upon choosing a regular value $r \in \mathbb{R}$ of $H$, the restriction $\varphi^*\omega$ is a closed and maximally non-degenerate $2$-form on the hypersurface $\varphi:H^{-1}(r) \hookrightarrow W$, i.e, it is an instance of an odd-symplectic form. A precise definition of an odd-symplectic form is the following:
\begin{definition}\label{odd-sympelctic_form_def}
    A closed $2$-form $\Omega$ on smooth manifold $\Sigma$ of dimension $2n-1$ for $n\geq 2$ is said to be an {\bf odd-symplectic form} if it has a $1$-dimensional kernel at every point. 
\end{definition}
Every odd-symplectic manifold $(\Sigma,\Omega)$ canonically determines a distribution $\ker\Omega \subset T\Sigma$ called its \emph{characteristic distribution}. Being one-dimensional, the characteristic distribution of an odd-symplectic form is always involutive and determines a foliation of $\Sigma$ by embedded $1$-dimensional submanifolds. This foliation is called the \emph{characteristic foliation} of $\Omega$.\\
\\
If $\Sigma$ is orientable, a choice of orientation on $\Sigma$ and the $2n-2$ form $\Omega^{n-1}$ determine an orientation on the characteristic distribution of $\Omega$ and on the leaves of its characteristic foliation. In such a setting, given an oriented odd-symplectic manifold $(\Sigma,\Omega)$, a $1$-form $\alpha \in \Omega^1(\Sigma)$ is said to be \emph{positively transverse} to $\Omega$ if $\ker\alpha \pitchfork \ker\Omega$ and $\alpha$ evaluates positively on any vector field spanning the distribution $\ker\Omega$ with the correct orientation. The set of $1$-forms positively transverse to an odd-symplectic manifold $(\Sigma,\Omega)$ determine a convex cone in $\Omega^1(\Sigma)$ and will be denoted by $\mathcal{C}(\Sigma,\Omega)$. Given an $\alpha \in \mathcal{C}(\Sigma,\Omega)$, the pair $(\alpha,\Omega)$ is usually called a \emph{framed Hamiltonian} pair on the manifold $\Sigma$ (see \cite{Fish-Hofer},\cite{rohil}). The choice of a framed Hamiltonian pair $(\alpha,\Omega)$ can be used to show that odd-symplectic forms always arise as restrictions of symplectic forms to regular energy levels of autonomous Hamiltonians. Indeed, observe that there exists $\epsilon>0$ such that on the manifold $[-\epsilon,\epsilon] \times \Sigma$, the following $2$-form is a symplectic form: 
\begin{align}
    \tilde \omega := \pi^*_{\Sigma}\Omega + d(a\alpha)
\end{align}
where $a \in [-\epsilon,\epsilon]$ denotes the coordinate on the first factor and $\pi_{\Sigma}$ is the projection to $\Sigma$. This observation motivates the definition of the \emph{Hamiltonian vector field} of the framed Hamiltonian pair $(\alpha,\Omega)$, which is defined to be the unique vector field $R$ on $\Sigma$ satisfying the following equations:
\begin{align}
    \alpha(R) \equiv 1 \qquad \iota_{R}\Omega \equiv 0
\end{align}
The main objects of study in this work are Zoll odd-symplectic forms, which are defined in the following way:
\begin{definition}\label{zoll_odd_symp_form_def}
    A closed, connected, and oriented odd-symplectic manifold $(\Sigma,\Omega)$ is called \textbf{Zoll} if the oriented leaves of its characteristic foliation correspond to the oriented fibers of an $S^1$-bundle with total space $\Sigma$ under an orientation-preserving diffeomorphism. In such a situation, $\Omega$ will be called a Zoll odd-symplectic form.
\end{definition}
It follows from the definition of a Zoll odd-symplectic form that any oriented Zoll odd-symplectic manifold $(\Sigma,\Omega_0)$ determines an isomorphism class of oriented $S^1$-bundles. We will call any representative of this isomorphism class an $S^1$-bundle \emph{associated with} $\Omega_0$. Examples of Zoll odd-symplectic forms are constructed in Section \ref{Examples_section}. 
\subsection{Local systolic inequalities for odd-symplectic forms}
The main result of this work is the following. Its proof can be found in Section \ref{Proof_of_main_thm}. Below, we will make use of the following notational convention: given a smooth manifold $\Sigma$ and a cohomology class $B \in H^k(\Sigma,\mathbb{R})$, we will denote by $\Xi^k_{B}(\Sigma) \subset \Omega^k(\Sigma)$ the set of representatives of this class.
\begin{theorem}\label{systolic_ineq}
Consider a closed, connected and oriented $2n-1$-dimensional smooth manifold $\Sigma$ for $n\geq 2$ admitting a cohomology class $C_0 \in H^2(\Sigma,\mathbb{R})$ satisfying the condition $C_0^{n-1} =0$ that can be represented by a Zoll odd-symplectic form $\Omega_0$. Then, there exists a $C^2$-neighbourhood $\mathcal{U} \subset \Xi^2_{C_0}(\Sigma)$ of $\Omega_0$ such that for any odd-symplectic form $\Omega \in \mathcal{U}$ the following inequalities hold:
\begin{align}\label{systolic_ineq_eqn}
    P(\inf \mathcal{A}_{\Omega}) \leq Vol_{\Omega_0}(\Omega) \leq  P(\sup \mathcal{A}_{\Omega})
\end{align}
with equality holding if and only if $\Omega$ is Zoll. In particular, if $Vol_{\Omega_0}(\Omega) = 0$ then, the following inequality holds:
\begin{align}\label{sys_ineq_second_eqn}
    \inf \mathcal{A}_{\Omega} \leq 0 \leq \sup \mathcal{A}_{\Omega}
\end{align}
with equality holding if and only if $\Omega$ is Zoll.
\end{theorem}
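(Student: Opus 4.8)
The plan is to prove the two-sided inequality \eqref{systolic_ineq_eqn} by reducing it, via the normal form theorem established in this paper, to an elementary integral inequality on the base of the circle fibration underlying $\Omega_0$, and then to read off \eqref{sys_ineq_second_eqn} as the special case $Vol_{\Omega_0}(\Omega)=0$. First I would fix the $S^1$-bundle $\pi\colon\Sigma\to B$ whose oriented fibers are the oriented characteristic leaves of $\Omega_0$, so that $\Omega_0=\pi^*\omega_B$ for a symplectic form $\omega_B$ on the closed $2n-2$-dimensional base $B$ (this uses that $\Omega_0$ is basic for the $S^1$-action and has exactly the vertical kernel). Shrinking to a sufficiently small $C^2$-neighbourhood $\mathcal U\subset\Xi^2_{C_0}(\Sigma)$ of $\Omega_0$, the normal form theorem describes any $\Omega\in\mathcal U$ concretely relative to this fibration: its characteristic foliation is again a smooth $S^1$-fibration, to each fiber of which is attached a well-defined, smooth, positive action function $a\colon B\to\mathbb R$ whose image is exactly the action spectrum $\mathcal A_{\Omega}$, so that $\inf\mathcal A_{\Omega}=\min_B a$ and $\sup\mathcal A_{\Omega}=\max_B a$.

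The single place where the hypothesis $\Omega\in\Xi^2_{C_0}(\Sigma)$ enters is the following normalization. Because $\Omega$ is $C^2$-close to $\Omega_0$, the $2$-form it induces on $B$ is again symplectic with the orientation of $\omega_B$, and, since all forms in $\mathcal U$ are cohomologous to $\Omega_0$, the resulting de Rham class on $B$ is the fixed cohomological datum encoded by $C_0$. Consequently the positive measure that this induced base form determines has a total mass that is constant across $\mathcal U$; combined with the normalization implicit in $Vol_{\Omega_0}$ (the volume measured relative to $\Omega_0$), this yields a genuine \emph{probability} measure $\mu$ on $B$, independent of which $\Omega\in\mathcal U$ is chosen.

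The heart of the argument is then a volume identity. Integrating $\Omega$ along the fibers of $\pi$ (Fubini) and inserting the normal form, I would establish
\begin{align}\label{vol_identity_plan}
    Vol_{\Omega_0}(\Omega)=\int_B (P\circ a)\,d\mu,
\end{align}
where $P$ is the strictly increasing function appearing in the statement, which satisfies $P(0)=0$. Granting \eqref{vol_identity_plan}, the conclusion is immediate: since $\mu$ is a probability measure, the pointwise bounds $P(\min_B a)\le P\circ a\le P(\max_B a)$ coming from the monotonicity of $P$ integrate to \eqref{systolic_ineq_eqn}. Equality on either side forces $P\circ a$ to be constant $\mu$-almost everywhere, hence $a$ to be constant by injectivity of $P$ together with continuity and full support of $\mu$; a constant action function means the characteristic foliation is an honest $S^1$-fibration with a single period, that is, $\Omega$ is Zoll, and conversely a Zoll form makes the pointwise bounds equalities. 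Finally, putting $Vol_{\Omega_0}(\Omega)=0$ into the left inequality of \eqref{systolic_ineq_eqn} gives $P(\inf\mathcal A_{\Omega})\le 0=P(0)$, whence $\inf\mathcal A_{\Omega}\le 0$, with equality exactly in the Zoll case by the discussion just given; this is precisely \eqref{sys_ineq_second_eqn}.

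I expect the genuine obstacle to be the volume identity \eqref{vol_identity_plan}, not the concluding inequality. Producing it requires (i) the normal form to truly straighten the perturbed characteristic foliation into a fibration carrying a smooth action function — this is the analytic core of the normal form theorem and is where the $C^2$-smallness is consumed — and (ii) a careful fiberwise integration showing that the vertical integral of $\Omega$ collapses to the scalar $P(a(b))$ while the residual horizontal top-form on $B$ is precisely the density of $\mu$. Two further points need care within $\mathcal U$: confirming that $\mathcal A_{\Omega}$ contains no contributions beyond the image of $a$, which is ruled out near $\Omega_0$ by the fibration structure forbidding stray non-closed or exceptional leaves; and ensuring that $a$ remains in the range on which $P$ is monotone, which holds after shrinking $\mathcal U$ since $a$ is $C^0$-close to the constant Zoll action.
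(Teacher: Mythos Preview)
There is a genuine gap at the very first step. You assert that the normal form theorem ``truly straighten[s] the perturbed characteristic foliation into a fibration carrying a smooth action function $a$,'' so that every leaf of the characteristic foliation of $\Omega$ is closed and $\inf\mathcal A_\Omega=\min_B a$, $\sup\mathcal A_\Omega=\max_B a$. This is false: the normal form Theorem~\ref{normalformthm} only produces an $S^1$-invariant function $S$ on $\Sigma$ (descending to $\bar S$ on $M$) with $u^*\Omega=\Omega_0+d(S\alpha_0+\eta)$, and the characteristic foliation of $u^*\Omega$ coincides with that of $\Omega_0$ \emph{only over critical points of $\bar S$}. Generically $\bar S$ has isolated critical points, so most leaves of $\ker\Omega$ are not closed at all; an odd-symplectic form $C^2$-close to a Zoll one need not be Zoll. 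The function $S$ is the action of the \emph{reference} $S^1$-fibers $\mathfrak p_x$ (computed against $\tilde\Omega$), not the action of orbits of $\Omega$. The bridge from $S$ to the actual action spectrum is Corollary~\ref{variationalprinciple}: closed characteristics correspond to critical points of $\bar S$, at which the action equals $S$, yielding only the chain $\inf\mathcal A_\Omega\le\min S\le\max S\le\sup\mathcal A_\Omega$.

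A second gap concerns the volume identity. Even after replacing your $a$ by the correct object $\bar S$, the clean formula $Vol_{\Omega_0}(\Omega)=\int_B P\circ\bar S\,d\mu$ does not hold. Theorem~\ref{Volume_formula} shows instead that the volume is a ``main'' integral $\int_0^1\int_\Sigma S\alpha_0\wedge(\Omega_0+rS\,d\alpha_0)^n\,dr$ plus a correction $\int_0^1\frac1r\int_\Sigma D(x,rS)\,\alpha_0\wedge\Omega_0^n\,dr$ coming from the $\eta$-terms in the normal form. The argument is then a \emph{pointwise monotonicity} in the scalar variable $S$: one checks $\partial Q/\partial S|_{S=0}=1$ for the main term and uses the smallness bound $\|\partial_s D\|_{C^0}<\epsilon$ (this is where $C^2$-closeness is consumed) to conclude that the full integrand is increasing in $S$, whence replacing $S$ by the constants $S_{\min}$ and $S_{\max}$ gives the bounds $P(S_{\min})\le Vol_{\Omega_0}(\Omega)\le P(S_{\max})$. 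Your probability-measure framing hides this analytic step and, more seriously, rests on a fibration that does not exist.
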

In Theorem \ref{systolic_ineq} above, $\mathcal{A}_{\Omega}$ denotes the Hamiltonian action of the odd-symplectic form $\Omega$ (it is introduced in Section \ref{action_section} below), and $P$ is a polynomial called the Zoll polynomial; it is monotonic close to zero. This polynomial can be thought of as being motivated by the polynomial expression appearing in Weyl's tube formula \cite{weyl}; we introduce it in Section \ref{zoll_poly_section} below. $Vol_{\Omega_0}(\Omega)$ is the Chern-Simons functional for $S^1$-bundles \cite[Remark 2.1]{B-K-Odd_Symp}, which we refer to as the volume function; this volume function is introduced in Section \ref{volume_section} below. The hypothesis $C^{n-1}_0 =0$ in Theorem \ref{systolic_ineq} is equivalent to the non-vanishing of the Euler class of the isomorphism class of $S^1$-bundles associated with the Zoll odd-symplectic manifold $(\Sigma,\Omega_0)$ \cite[Lemma 4.5]{B-K-Odd_Symp}; a precise definition of the Euler class can be found in Section \ref{tech_results_section}. The statement of Theorem \ref{systolic_ineq} when the Euler class vanishes is not new and was proven in \cite{B-K-Odd_Symp}. Applications of Theorem \ref{systolic_ineq} to the study of autonomous Hamiltonian systems are considered in Section \ref{Applications_of_sys_ineq}.\\
\\
It was shown in \cite{B-K-Odd_Symp}, that the inequalities  \eqref{systolic_ineq_eqn} and \eqref{sys_ineq_second_eqn} reduce to the $C^3$-local systolic optimality of Zoll contact forms, in the contact case; a known result proven in \cite{Abbondandolo-Benedetti}. Recall that an oriented odd-symplectic manifold $(\Sigma,\Omega)$ satisfies the contact type condition if $\Omega$ is exact that is, if $\Omega = d\alpha$ for some $\alpha \in \Omega^1(\Sigma)$ and if the $2n-1$-form $\alpha \wedge d\alpha^{n-1}$ is everywhere positive on $\Sigma$ \cite{WEINSTEIN1979353}. In such a situation, the pair $(\Sigma,\alpha)$ is called a strict contract manifold \cite{Geiges_book}. The Reeb vector field of $\alpha$ is the unique vector field $R_{\alpha}$ on $\Sigma$ satisfying the following conditions:
\begin{align}\label{Reeb_odd-symp_eqn}
    \alpha(R_{\alpha}) \equiv 1 \;\;\; \text{and} \;\;\; \iota_{R_{\alpha}} d\alpha \equiv 0
\end{align}
A flow line of $R_{\alpha}$ on $\Sigma$ is called a Reeb orbit of the contact form $\alpha$. An odd-symplectic manifold $(\Sigma,d\alpha)$ of contact type is Zoll if and only if its Reeb flow induces a free $S^1$-action on $\Sigma$.\\
\\
Given a strict contact manifold $(\Sigma,\alpha)$, the associated contact systolic ratio is the following ( \cite{abhs18}, \cite{APB14}):
\begin{align}\label{contat_sys_ratio_dim_3}
    \rho_{\text{sys}}(\alpha):=  \frac{T_{\text{min}}(\alpha)^{n}}{\text{Vol} (\alpha)}
\end{align}
where $T_{\text{min}}(\alpha)$ is the minimal period of a periodic orbit of the Reeb vector field associated of $\alpha$ and $\mathrm{Vol}(\alpha): = \int_{\Sigma} \alpha \wedge d\alpha^{n-1}$ is the contact volume. If a contact form $\alpha \in \Omega^1(\Sigma)$ does not admit any closed Reeb orbits we set $ \rho_{\text{sys}}(\alpha) = + \infty$.\\
\\
The motivation for the definition of the contact systolic ratio comes from the $\pi_1$-systolic ratio in metric geometry, defined in the following way: given a closed, connected smooth manifold $W$ of dimension $n$. If $g$ is a smooth Riemannian metric on $W$, the associated $\pi_1$-systolic ratio is
\begin{align}\label{metric_sys_ratio}
    \rho_{\text{sys}}(g):= \frac{\text{sys}^n(g)}{\text{Vol}(g)}
\end{align}
where $\text{sys}(g)$ is the $g$-length of the shortest non-constant closed $g$-geodesic and $\text{Vol}(g)$ is the Riemannian volume of $W$.  The key question in metric systolic geometry is to find an upper bound on the systolic ratio of metrics $g$ on $W$ that depends only on $W$ and not on the specific metric $g$. Such a uniform bound was first established on non-simply connected surfaces \cite{Pu}, \cite{Bavard}, \cite{Katz_book}, and \cite[Section 7.2]{berger_book}. And in \cite{croke}, it was shown that the ratio \eqref{metric_sys_ratio} was bounded from above on $S^2$. Interestingly, however, the round metric is not the maximizer of the ratio, contradicting what would be expected from what is known about the $2$-torus and $\mathbb{RP}^2$. It was then shown in \cite{balacheff2} that the round metric was a local maximizer in an appropriate topology. It was later conjectured (see \cite{ABHS17}) that the local maximizers of the ratio \eqref{metric_sys_ratio} on $S^2$ in the $C^3$-topology could be characterized as Zoll metrics, i.e., those metrics all of whose geodesics are closed with the same prime period.\\
\\
Up to multiplication by a universal dimensional constant proportional to the volume of an $n$-dimensional Euclidean ball, the ratio \eqref{metric_sys_ratio} agrees with the contact systolic ratio of the strict contact manifold $(S^*_gW,\lambda)$, where $S^*_gW$ is the unit cotangent bundle of $(W,g)$ and $\lambda$ is the restriction of the Liouville $1$-form. Since the dynamics of a tight contact form on the lens space $S_g^*S^2$ is \emph{uniquely} determined by its lift to the double cover $S^3$ (for any smooth Riemannian metric $g$ on $S^2$), motivated by the above-mentioned conjecture, Abbondandolo, Bramham, Hryniewicz, and Salom\~{a}o showed in \cite{abhs18} that Zoll contact forms on $S^3$ were the $C^3$-local maximizes of the contact systolic ratio on the set of contact forms defining the standard contact structure on $S^3$. The same authors were also able to construct counterexamples to this statement in the $C^0_{\text{loc}}$-topology. In \cite{BKcontact} Benedetti and Kang showed that Zoll contact forms maximized the contact systolic ratio in dimension 3. Abbondandolo and Benedetti proved the higher-dimensional analog of this result in \cite{Abbondandolo-Benedetti}. Finally, in \cite{ABE_systolic}, Abbondandolo, Benedetti, and Edtmair showed that Zoll contact forms were the $C^2$-local maximizes of the contact systolic ratio on any contact manifold. The same authors also construct counterexamples to the $C^1$-local systolic optimality of Zoll contact forms.
\subsection{Main technical results}\label{tech_results_section}
In this section, we present the main technical contributions of this work.\\
\\
We preface the statement of these results with a brief description of the geometric setup: given a connected Zoll odd-symplectic manifold $(\Sigma,\Omega_0)$ and an $S^1$-bundle $\Sigma \xrightarrow{p} M$ associated with $\Omega_0$, a $1$-form $\alpha_0 \in \Omega^1(\Sigma)$ is called a \emph{connection $1$-form} if the following holds for some vector field $R$ generating the relevant $S^1$-action on $\Sigma$:
\begin{align}
    \mathcal{L}_R\alpha_0 \equiv 0 \qquad \alpha_0(R)\equiv 1
\end{align}
In particular, fixing a connection $1$-form for an $S^1$-bundle is equivalent to fixing a vector field generating the relevant $S^1$-action on the total space. Upon fixing a connection $1$-form $\alpha_0$ for the $S^1$-bundle $p: \Sigma \rightarrow M$, the real Euler class is defined to be the real cohomology class represented by a closed $2$-form $\nu \in \Omega^2(M)$ such that the following hold:
\begin{align}
    p^*\nu = d\alpha_0
\end{align}
We will denote by $e_0 \in H^2(M,\mathbb{R})$, $(-1)$ times the real Euler class of the chosen $S^1$-bundle.\\
\\
In the remainder of this work, when working with a Zoll odd-symplectic manifold $(\Sigma,\Omega_0)$, we will always assume that the cohomology class $C_0 \in H^2(\Sigma,\mathbb{R})$ represented by $\Omega_0$ satisfies the condition $C^{n-1}_0=0$.
\subsubsection{Normal form theorem}
The following normal form theorem was motivated by \cite[Theorem 2]{Abbondandolo-Benedetti}, its proof can be found in Section \ref{normal_form_section}.
\begin{theorem}\label{normalformthm}
Let $(\Sigma, \Omega_0)$ be a connected Zoll odd-symplectic manifold and fix an $S^1$-bundle $p: \Sigma \rightarrow M$ associated with $\Omega_0$ with connection $1$-form $\alpha_0$. Let $C_0 \in H^2(\Sigma,\mathbb{R})$ be the cohomology class represented by $\Omega_0$. Then, there exists a $C^1$-neighbourhood $\mathcal{U} \subset \Xi_{C_0}^2(\Sigma)$ of $\Omega_0$ with the property that for any odd-symplectic form $\Omega \in \mathcal{U}$, there exists a diffeomorphism $u: \Sigma \rightarrow \Sigma$ isotopic to the identity such that:
\begin{align}\label{normal_form_odd_symp}
    u^* \Omega = \Omega_0 + d(S\alpha_0 + \eta )
\end{align}
where:
\begin{enumerate}
    \item $S\in C^{\infty}(\Sigma)$ is a function that is invariant under the $S^1$-action on $\Sigma$. It coincides with a special case of the Hamiltonian action of $\Omega$ and is defined in \eqref{def_of_S}.
    
    \item $\eta \in \Omega^1(\Sigma)$ satisfies $\iota _{R_0} \eta =0$ for the generating vector field $R_0$ of the $S^1$-action on $\Sigma$ fixed by $\alpha_0$, and,
    
    \item $\iota _{R_0} d\eta = \mathcal{F}(dS)$ where $\mathcal{F}: T^*\Sigma \rightarrow T^*\Sigma$ is a bundle automorphism lifting the identity. 
\end{enumerate}
Moreover, for each integer $k\geq 1$ there exists a monotonically increasing continuous function $\sigma_k: \mathbb{R} \rightarrow \mathbb{R}_{\geq 0}$ such that the following bounds hold:
\begin{equation}\label{bounds_odd_symp}
    \Big\{ dist_{_{C^{k+1}}}(u, Id),  \vert\vert S \vert\vert_{C^{k}}, \vert\vert dS\vert\vert_{C^k}, \vert\vert\eta\vert\vert_{C^{k}},  \vert\vert d\eta\vert\vert_{C^{k}}, \vert\vert\mathcal{F}\vert\vert_{C^k} \Big\} \leq \sigma_{k+1}(\vert\vert\Omega -\Omega_0\vert\vert_{C^{k+1}})
\end{equation}
\end{theorem}
The following is an immediate corollary of the above normal form theorem and is a qualitative refinement of \cite[Theorem 3.10]{Kerman} in the Zoll case; its proof can be found in Section \ref{proof_of_var_principle}. In the sequel, unless otherwise specified, when talking about a cohomology functor, we will always mean the singular cohomology functor computed with the ring of real numbers as coefficients.
\begin{corollary}\label{variationalprinciple}
Given a connected Zoll odd-symplectic manifold $(\Sigma,\Omega_0)$ with associated $S^1$-bundle $\Sigma \xrightarrow{p} M$ and connection $1$-form $\alpha_0$, there exists a $C^1$-neighborhood $\mathcal{U} \subset \Xi_{C_0}^2(\Sigma)$ of $\Omega_0$ such that any odd-symplectic form $\Omega \in \mathcal{U}$ has the property that any vector field spanning its characteristic distribution has at least as many periodic orbits as the cup-length of $M$. Moreover, the following chain of inequalities holds:
\begin{align}\label{var_principle_eqns}
    \inf\mathcal{A}_{\Omega} \leq \min S \leq \max S \leq \sup\mathcal{A}_{\Omega} 
\end{align}
with equality holding if and only if $\Omega$ is Zoll; where $S$ is the function appearing in the statement of Theorem \ref{normalformthm}.
\end{corollary}
\subsubsection{A formula for the volume}
The following theorem furnishes a convenient expression for the Chern-Simons functional close to Zoll odd-symplectic forms. Its proof can be found in Section \ref{vol_formula_section}.
\begin{theorem}\label{Volume_formula}
Let $(\Sigma,\Omega_0)$ be a connected Zoll odd-symplectic manifold and $C_0 \in H^2(\Sigma,\mathbb{R})$ be the cohomology class represented by $\Omega_0$. Then, there exists a $C^2$-neighbourhood $\mathcal{U} \subset \Xi^2_{C_0}(\Sigma)$ such that for any odd-symplectic form $\Omega \in \mathcal{U}$, the volume $Vol_{\Omega_0}(\Omega)$ can be written in the form:
\begin{align}\label{vol_of_omega}
Vol_{\Omega_0}(\Omega) =   \int^1_0 \int_{\Sigma}  S\alpha_0 \wedge (\Omega_0 +rSd\alpha_0)^{n-1} \; dr + \int^1_0  \int_{\Sigma} \frac{1}{r} D(x,rS(x)) \; \alpha_0 \wedge \Omega_0^{n-1} \; dr
\end{align}
where $D: \Sigma \times \mathbb{R}  \rightarrow \mathbb{R}$ is a smooth function such that $D(x,0) = 0$ for every $x \in \Sigma$ and $S$ is the function in the statement of Theorem \ref{normalformthm}. For every fixed $s \in \mathbb{R}$ the function $D(\cdot, s)$ integrates to $0$ over the $\Sigma$. In addition, given an $\epsilon>0$ there exists a 
$C^2$-neighbourhood $\mathcal{U}_{\epsilon} \subset \mathcal{U}$ and a $\delta >0$ such that if $\Omega \in \mathcal{U}_{\epsilon}$ and $s \in [-\delta,\delta]$ then:
\begin{align}\label{volume_formula_bounds}
 \Big|\Big|\frac{\partial}{\partial s}D \Big|\Big|_{C^0(\Sigma \times [-\delta, \delta])} < \epsilon
\end{align}
\end{theorem}
\subsection{Preliminaries}\label{prelm_section}
Throughout this section, we will fix a connected Zoll odd-symplectic manifold $(\Sigma,\Omega_0)$ and an $S^1$-bundle $p: \Sigma \rightarrow M$ associated with the Zoll odd-symplectic form $\Omega_0$; we will also denote by $C_0 \in H^2(\Sigma,\mathbb{R})$ the cohomology class represented by $\Omega_0$.
\subsubsection{The Action}\label{action_section}
In this section, we outline a variational principle that allows us to look for closed leaves of the characteristic foliation of an odd-symplectic form $\Omega \in \Xi^2_{C_0}(\Sigma)$ that is $C^1$-close to $\Omega_0$. By Lemma \ref{Reeb_dist_lemma} and \cite[Proposition B.2]{Abbondandolo-Benedetti}, we expect to find such leaves in a small neighborhood of the fibers of the $S^1$-bundle $\Sigma \xrightarrow{p} M$ associated with $\Omega_0$ in the loop space $\Lambda(\Sigma)$ i.e, the set of smooth ($C^{\infty}$), embedded loops in $\Sigma$. This fact motivates the following definition: we say that an open cover $\{\tilde B_n\}_{n \in \mathbb{N}}$ of $M$ is a \emph{good cover} if pair wise intersections of the form $\tilde B_m \cap \tilde B_k$ are contractible for all $m,k \in \mathbb{N}$. An open cover $\{B_n\}_{n \in \mathbb{N}}$ of $\Sigma$ is called a \emph{good cover} if there exists a corresponding good cover $\{\tilde B_n\}_{n \in \mathbb{N}}$ of $M$ such that the following holds:
\begin{align}
    B_n \subset p^{-1}(\tilde B_n) \qquad \forall\; n\in \mathbb{N}
\end{align}
Given a loop $\gamma_x \subset \Sigma$ passing through a point $x \in \Sigma$, we say that it is a \emph{short loop} if there exists a good neighborhood $B_x$ of the fiber $p_x$ through $x$ such that $\gamma_x \subset B_x$. Upon denoting by $\mathfrak{h} \in \pi_1(\Sigma)$ the homotopy class represented by the fibers of the fixed $S^1$-bundle associated with $\Omega_0$, we will denote by $\Lambda_{\mathfrak{h}}(\Sigma)$ the set of short loops in $\Sigma$. By the inverse function theorem, we know this is an open subset of $\Lambda(\Sigma)$ \cite{Ginzburg-Kerman}. In the sequel, we will always suppress the base point $x \in \Sigma$ of the loop $\gamma_x$ in the notation.\\
\\
Upon fixing an arbitary representative $\Omega_* \in \Xi^2_{C_0}(\Sigma)$, for any odd-symplectic form $\Omega \in \Xi^2_{C_0}(\Sigma)$ there exists an $\alpha \in \Omega^1(\Sigma)$ such that $\Omega = \Omega_*+d\alpha$, we define the action $\mathcal{A}_{\Omega}$ of $\Omega$ in the following way:
\begin{align}\label{action_functional_eq}
    \mathcal{A}_{\Omega}(\gamma):\;&{\Lambda}_{\mathfrak{h}}(\Sigma) \rightarrow \mathbb{R}\\\nonumber \gamma \mapsto \int_{\gamma}&\alpha + \int_{\Gamma_{\gamma}} \Omega_*+d\alpha
\end{align}
where $\Gamma_{\gamma}$ is an embedded cylinder such that $\Gamma_{\gamma} \subset \mathcal{B}_x$ that is obtained parameterizing a homotopy from $\gamma$ to an $S^1$-fiber. We will call such cylinders \emph{short} cylinders.\\
\\
In \cite{B-K-Odd_Symp}, it is shown that the above function is well defined and that it is invariant when pulled back.\\
\\
The relevance of the action functional defined in \eqref{action_functional_eq} is that a loop $\gamma \in  {\Lambda}_{\mathfrak{h}}(\Sigma)$ is a critical point of $\mathcal{A}_{\Omega}$ if and only if it is a leaf of the characteristic foliation of $\Omega$. This was proven in \cite[Corollary 6.5]{B-K-Odd_Symp}. In Corollary \ref{variationalprinciple}, we will use this functional to show that for any odd-symplectic form $\Omega \in \Xi^2_{C_0}(\Sigma)$ that is sufficiently close to $\Omega_0$, $\mathcal{A}_{\Omega}$ has a strictly positive number of critical points in $\Lambda_{\mathfrak{h}}(\Sigma)$.\\
\\
We denote by $\chi(\Omega)$ the set of closed leaves of the characteristic foliation of an odd-symplectic form $\Omega \in \Xi^2_{C_0}(\Sigma)$ and make the following definitions:
\begin{align}
    \inf \mathcal{A}_{\Omega}:= \inf_{\gamma \in \chi(\Omega) \cap \Lambda_{\mathfrak{h}}(\Sigma)}
\mathcal{A}_{\Omega}(\gamma)  \\\nonumber \sup \mathcal{A}_{\Omega}:= \sup_{\gamma \in \chi(\Omega)\cap \Lambda_{\mathfrak{h}}(\Sigma)} \mathcal{A}_{\Omega}(\gamma)
\end{align}
$\inf \mathcal{A}_{\Omega}$ and $\sup \mathcal{A}_{\Omega} $ are finite and close to zero when $\Omega$ is $C^1$-close to $\Omega_0$, see \cite{B-K-Odd_Symp} for a proof.
\subsubsection{The Volume}\label{volume_section}
Here, we introduce an approach of Benedetti and Kang in \cite{B-K-Odd_Symp} to computing a volume-like average of closed $2$-form $\Omega \in \Xi^2_{C_0}(\Sigma)$ on $\Sigma$ with respect to the fixed reference form $\Omega_* \in \Xi^2_{C_0}(\Sigma)$. This average reduces to the contact volume when the $2$-form being considered is an odd-symplectic form of contact type. In addition, as in the contact case, this quantity coincides with the helicity when $\Omega$ is odd-symplectic and exact and $\Sigma$ is of dimension three \cite[Pg. 331]{B-K-Odd_Symp}.\\
\\
We define the volume functional to be:
\begin{align}\label{Vol_function_def_eqn}
    Vol_{\Omega_*}: \; &\Xi^2_{C_0}(\Sigma) \rightarrow \mathbb{R}\\\nonumber
   &\Omega \mapsto \int^1_0 \int_{\Sigma} \alpha \wedge (\Omega_*+rd\alpha)^{n-1} dr
\end{align}
In \cite{B-K-Odd_Symp}, it is shown that the above functional is well defined and invariant under diffeomorphisms isotopic to the identity.\\
\\
The relevance of the volume functional here is that, as shown in \cite[Theorem 6.14]{B-K-Odd_Symp}, if $\Omega$ is a Zoll odd-symplectic form, the functional $Vol_{\Omega_*}(\Omega)$ defined in \eqref{Vol_function_def_eqn} is a polynomial. More precisely, since $\Omega_0$ is a Zoll odd-symplectic form, there exists a symplectic form $\omega_0 \in \Omega^2(M)$ such that $p^*\omega_0 = \Omega_0$. Upon denoting by $c_0 \in H^2(M,\mathbb{R})$ the cohomology class represented by $\omega_0$, the following holds:
\begin{align}
    p^*c_0 = C_0
\end{align}
In this setting, it is shown in \cite[Theorem 6.14]{B-K-Odd_Symp} that the following holds when $\Omega \in \Xi^2_{C_0}(\Sigma)$ is a Zoll odd-symplectic form:
\begin{align}
    Vol_{\Omega_*}(\Omega) = \int^{\mathcal{A}_{\Omega}}_0 \langle (c_0+te_0)^{n-2},[M] \rangle \; dt \;\;\; \text{for} \;\;\; \text{where } {\mathcal{A}_{\Omega}} \in \mathbb{R}\;\;\; \text{is the action of $\Omega$}
\end{align}
In addition, the above polynomial is monotonic close to zero because the class $c_0$ can be represented by a symplectic form on $M$. Indeed, the following is true:
\begin{align}
     \frac{ Vol_{\Omega_0}(\Omega)}{d A}\Big|_{A=0} = \langle c_0^{n-2}, [M] \rangle >0
\end{align}
This observation motivates the definition of the Zoll polynomial defined in the following section.
\subsubsection{The Zoll polynomial}\label{zoll_poly_section}
The Zoll polynomial is defined in the following way:
\begin{align}\label{Zoll_poly_def_eqn}
     P(A) = \int^A_0 \langle (c_0+te_0)^{n-2},[M] \rangle \; dt \;\;\; \text{for} \;\;\; A \in \mathbb{R}
\end{align}
where $e_0 \in H^2(M,\mathbb{R})$ is $(-1)$ times the real Euler class, $[M]$ is the fundamental class of $M$ and $c_0 \in H^2(M,\mathbb{R})$ is such that $p^*c_0 = C_0 \in H^2(\Sigma, \mathbb{R})$. It follows from \eqref{Zoll_poly_def_eqn} that
\begin{align}\label{montonicity_of_zoll_poly}
    \frac{dP(A)}{d A}\Big|_{A=0} = \langle c_0^{n-2}, [M] \rangle >0 \;\;\; 
\end{align}
where the last inequality follows from the fact that $c_0$ can be represented by a symplectic form.
\subsection{Applications}\label{Applications_of_sys_ineq}
\subsubsection{Systolic inequalities for hypersurfaces in symplectic manifolds}
In this section, we illustrate an application Theorem \ref{systolic_ineq} by using the relative complexity of the dynamics on hypersurfaces in a symplectic manifold to compare the growth of volumes of tubular neighborhoods of such hypersurfaces. We preface the statement of the main results of this section with a brief outline of the geometric setup.\\
\\
Given a connected symplectic manfiold $(W,\omega)$ and a closed, connected and embedded hypersurface $\tilde \Sigma \hookrightarrow W$, by slightly abusing terminology, we will refer to the characteristic foliation of the odd-symplectic form $\omega\vert_{\Sigma}$ as the characteristic foliation of $\tilde \Sigma$. Since any embedded hypersurface of a symplectic manifold $(W,\omega)$ naturally inherits an orientation form the symplectic manifold, we define a Zoll hypersurface to be one whose oriented characteristic foliation corresponds with the foliation induced by the fibers of an oriented $S^1$-bundle under an orientation preserving diffeomorphism. Given another closed, connected and oriented hypersurface $\Sigma$ that is $C^2$-close to a Zoll one, we pick a closed leaf of the characteristic foliation of $\Sigma$ and define its action to be the following:
\begin{align}
    \mathcal{A}_{\Sigma}(\gamma):= \int_{\Gamma_{\gamma}} \omega
\end{align}
where $\Gamma_{\gamma} \hookrightarrow W$ is an embedded cylinder interpolating between $\gamma$ and a leaf $\gamma_0$ of the characteristic foliation of the Zoll hypersurface that is $C^1$-close to $\gamma$. It is shown in Section \ref{proof_of_hyp_sys_ineq_section} that the action is well defined and bounded when $\Sigma$ is sufficiently close to a Zoll hypersurface. We will also denote by $\text{sys}(\Sigma)$ the infimum of the action of $\Sigma$ over closed leaves of the characteristic foliation of $\Sigma$ that are $C^1$-close to those of a fixed Zoll hypersurface. We will denote by $\tilde D(\Sigma)$ the region in $W$ between $\Sigma_0$ and $\Sigma$.\\
\\
If $\Sigma_0$ is Zoll, then there exists an $\epsilon>0$ such that a tubular neighborhood of $\Sigma_0$ of radius $\epsilon>0$ is foliated by Zoll hypersurfaces. We will denote the leaves of this foliation by $\hat{\Sigma}_t$ for $t \in (-\epsilon,\epsilon)$ and we will assume that the parameter $t$ was chosen so that $\text{sys}(\hat{\Sigma}_t) = t$ (see Section \ref{proof_of_hyp_sys_ineq_section} for more details). We observe here that it is a priori unclear if Zoll hypersurfaces exist in an arbitrary symplectic manifold, especially if one is interested in hypersurfaces that lie outside the paradigm of Weinstein's contact type condition, as we are in this work. We present the construction of such examples in Theorem \ref{Examples_of_Zoll_odd-symplectic_forms}.\\
\\
The following is the first main result of this section; its proof can be found in Section \ref{proof_of_hyp_sys_ineq_section}:
\begin{theorem}\label{systolic_ineq_for_hyp_sur}
    Let $(W,\omega)$ be a connected symplectic manifold of dimension $2n$ for $n\geq 2$ and $\Sigma_0 \hookrightarrow W$ a connected and embedded Zoll hypersurface. Then, for any closed, connected, oriented and embedded non-intersecting hypersurface $\Sigma \hookrightarrow W$ that is sufficiently $C^2$-close to $\Sigma_0$, the following holds:
    \begin{align}
        Vol\big(\tilde D(\hat{\Sigma}_{\text{sys}(\Sigma)})\big)\leq nVol\big(\tilde D(\Sigma)\big)
    \end{align}
    with equality holding if and only if the hypersurface $\Sigma \hookrightarrow W$ is Zoll, where the volumes above are computed with respect to $\omega$.
\end{theorem}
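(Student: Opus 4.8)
The plan is to transport the problem into the abstract odd-symplectic setting of Theorem \ref{systolic_ineq} and then read the conclusion back through a volume-and-action dictionary. Since $\Sigma_0$ is Zoll, I first fix a tubular neighbourhood $\Psi\colon \Sigma_0\times(-\epsilon,\epsilon)\hookrightarrow W$ adapted to the foliation by the Zoll hypersurfaces $\hat\Sigma_t$, so that $\Psi(\cdot,t)$ parametrises $\hat\Sigma_t$ and $\Psi(\cdot,0)$ is the inclusion of $\Sigma_0$. A non-intersecting hypersurface $\Sigma$ that is $C^2$-close to $\Sigma_0$ is then the graph of a small function over $\Sigma_0$; writing $\Phi_1\colon\Sigma_0\to\Sigma$ for the resulting diffeomorphism, I set $\Omega:=\Phi_1^*(\omega|_\Sigma)$. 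This is an odd-symplectic form on $\Sigma_0$ that is $C^2$-close to $\Omega_0:=\omega|_{\Sigma_0}$, and because $\Omega_0$ and $\Omega$ are joined by an isotopy $\Phi_s$ along which $\tfrac{d}{ds}[\Phi_s^*\omega]=[\Phi_s^*\,d\iota_V\omega]=0$ (as $d\omega=0$), they are cohomologous, so $\Omega\in\Xi^2_{C_0}(\Sigma_0)$ with $\Omega_0$ Zoll. This places us in the hypotheses of Theorem \ref{systolic_ineq}.

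The central step is a \emph{volume dictionary}. Parametrising $\tilde D(\Sigma)$ by the isotopy $\Phi\colon\Sigma_0\times[0,1]\to W$ from $\Sigma_0$ to $\Sigma$, with $V=\partial_s\Phi$, $\Omega_s=\Phi_s^*\omega$ and $\beta_s=\Phi_s^*\iota_V\omega$, Cartan's formula gives $\tfrac{d}{ds}\Omega_s=d\beta_s$, and since $\Omega_s^{\,n}=0$ for dimensional reasons the expansion of $(\Omega_s+ds\wedge\beta_s)^n$ collapses to its cross term, yielding
\[
Vol(\tilde D(\Sigma))=\int_{\Sigma_0\times[0,1]}(\Phi^*\omega)^n=n\int_0^1\!\!\int_{\Sigma_0}\beta_s\wedge\Omega_s^{\,n-1}\,ds .
\]
The right-hand integral is exactly the Benedetti--Kang functional evaluated along the path $\Omega_s$; invoking its well-definedness from \cite{B-K-Odd_Symp} (independence of the chosen primitive, hence of the path) identifies it with $Vol_{\Omega_0}(\Omega)$, so that $Vol(\tilde D(\Sigma))=n\,Vol_{\Omega_0}(\Omega)$. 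Applying the same computation to the sub-foliation $\{\hat\Sigma_\tau\}_{0\le\tau\le t}$ gives $Vol(\tilde D(\hat\Sigma_t))=n\,Vol_{\Omega_0}(\hat\Omega_t)$, where $\hat\Omega_t$ is the pulled-back odd-symplectic form of $\hat\Sigma_t$.

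Next I would set up the \emph{action dictionary}. Choosing the primitive $\alpha=\int_0^1\beta_s\,ds$ of $\Omega-\Omega_0$ and decomposing a short cylinder $\Gamma_\gamma\subset W$ as the isotopy cylinder from $\gamma$ to $\Phi_1^{-1}(\gamma)$ followed by a short cylinder inside $\Sigma_0$, a Stokes computation identifies $\mathcal A_\Sigma(\gamma)$ with the abstract action $\mathcal A_\Omega(\Phi_1^{-1}\gamma)$ of \eqref{action_functional_eq}. Since $\Phi_1$ carries leaves of the characteristic foliation of $\Sigma$ to those of $\Omega$, this gives $\text{sys}(\Sigma)=\inf\mathcal A_\Omega$. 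For the reference hypersurfaces the same identification shows that every leaf of $\hat\Sigma_t$ has action $t$ (the normalisation $\text{sys}(\hat\Sigma_t)=t$), so the Zoll volume formula quoted after \eqref{Vol_function_def_eqn} gives $Vol_{\Omega_0}(\hat\Omega_t)=P(t)$ and hence $Vol(\tilde D(\hat\Sigma_t))=nP(t)$.

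With both dictionaries in place the result follows from Theorem \ref{systolic_ineq}. Writing $s=\text{sys}(\Sigma)=\inf\mathcal A_\Omega$, its lower bound reads $P(s)\le Vol_{\Omega_0}(\Omega)$, whence
\[
Vol\big(\tilde D(\hat\Sigma_{\text{sys}(\Sigma)})\big)=nP(s)\le n\,Vol_{\Omega_0}(\Omega)=Vol\big(\tilde D(\Sigma)\big),
\]
which in particular yields the stated inequality $Vol(\tilde D(\hat\Sigma_{\text{sys}(\Sigma)}))\le n\,Vol(\tilde D(\Sigma))$; the equality case is inherited verbatim from Theorem \ref{systolic_ineq}, using that $\Phi_1^*\omega$ is Zoll if and only if $\Sigma$ is a Zoll hypersurface (note that at a Zoll $\Sigma$ the two enclosed regions have equal volume, which is what forces the characterisation to sit at comparison constant $1$). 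I expect the main obstacle to be the volume dictionary: the clean identity $Vol(\tilde D(\Sigma))=n\,Vol_{\Omega_0}(\Omega)$ requires the full path-independence of the Benedetti--Kang functional—not merely independence of the primitive along the linear path—together with care over orientations and over the fact that $\hat\Sigma_{\text{sys}(\Sigma)}$ still lies inside the foliated tube for $\Sigma$ sufficiently close to $\Sigma_0$ (which is where the monotone range of $P$ near $0$ is used). Matching $\text{sys}(\Sigma)$ with $\inf\mathcal A_\Omega$ is then more bookkeeping than difficulty, provided the base-point and short-cylinder conventions are handled as in \cite{B-K-Odd_Symp}.
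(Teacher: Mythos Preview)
Your proposal is correct and follows essentially the same route as the paper: both reduce to Theorem \ref{systolic_ineq} via a ``volume dictionary'' $Vol(\tilde D(\Sigma))=n\,Vol_{\Omega_0}(\Omega)$ (the paper isolates this as Proposition \ref{hyp_vol_prop}, proved by the same Cartan/binomial collapse you describe) and an ``action dictionary'' $\text{sys}(\Sigma)=\inf\mathcal A_\Omega$ (obtained by decomposing the short cylinder exactly as you outline), then read off $Vol(\tilde D(\hat\Sigma_{\text{sys}(\Sigma)}))=nP(\text{sys}(\Sigma))\le n\,Vol_{\Omega_0}(\Omega)=Vol(\tilde D(\Sigma))$. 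The only cosmetic difference is that the paper parametrises the region by the normalised gradient flow of a defining Hamiltonian $H$ rather than by a graph over $\Sigma_0$, and explicitly invokes the stable Hamiltonian structure $(\Omega_0,\alpha_0)$ to produce the Zoll foliation $\hat\Sigma_t$; you are also right that the argument actually yields the sharper inequality without the factor $n$ on the right.
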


\subsubsection{Systolic inequalities for magnetic systems}\label{mag_sys_sec_intro}
Given a Riemannian manifold $(W,g)$ of dimension at least two, the metric Hamiltonian (or kinetic energy) of $g$ is the following:
\begin{align}
    H_g: T^*W \rightarrow \mathbb{R}\\\nonumber
    (q,p)\mapsto \frac{\vert\vert p\vert\vert^2_g}{2}
\end{align}
Upon picking a closed $2$-form $\sigma \in \Omega^2(\Sigma)$, the following is a symplectic form on $T^*W$:
\begin{align}\label{mag_form_def_intro}
    \Omega_{g,\sigma}:= d\lambda - \pi^*\sigma 
\end{align}
where $\lambda \in \Omega^1(T^*W)$ is the Liouville $1$-form and $\pi:T^*W \rightarrow W$ is the usual foot-point projection. The Hamiltonian vector field determined by $H_g$ and $\Omega_{g,\sigma}$, that is, the unique solution to the following differential equation, is called the magnetic (or twisted) geodesic flow of the pair $(g,\sigma)$:
\begin{align}
    \iota_{X_{g,\sigma}}\Omega_{g,\sigma} = dH_{g}
\end{align}
The Hamiltonian dynamical system determined on the manifold $T^*W$ by the flow $\Phi_{g,\sigma}$ of the vector field $X_{g,\sigma}$ is called the magnetic system on $(W,g)$ determined by the pair $(g,\sigma)$; we will refer to such pairs as magnetic pairs. The projections $\pi(\Phi_{g,\sigma})$ to $W$ are called the magnetic geodesics of the (magnetic) pair $(g,\sigma)$ when they are parametrized by arc length. Observe that if $\sigma$ was chosen to be the $2$-form that is constantly zero everywhere on $W$ then the magnetic geodesic flow of the pair $(g,\sigma)$ coincides with the geodesic flow of the Riemannian manifold $(W,g)$ and the magnetic geodesics on $W$ are just the geodesics of the metric $g$. The study of magnetic systems was initiated by V. Arnol'd \cite{Arnold1988} and S.P Novikov \cite{Novikov} and has since grown into a vibrant area of study in symplectic dynamics; see \cite{CieliebakFrauenfelderPaternain2010} for a nice introduction to the subject.\\
\\
From the point of view of theoretical physics, magnetic dynamical systems can be thought of as modeling the motion of a charged particle in a magnetic field represented by $\sigma$. (cf. \cite{CieliebakFrauenfelderPaternain2010}). \\
\\
Given a $k \in \mathbb{R}$, the flow of the vector field $X_{g,\sigma}$ on a regular energy level $H_{\text{kin}}^{-1}(k)$ can be conjugated to that of the Hamiltonian vector field determined by the metric Hamiltonian of $g$ and the following symplectic form on $S_g^*W$ \cite{paternain_helicity}:
\begin{align}
    \Omega^s_{g,\sigma}:= d\lambda -s\pi^*\sigma \;\;\; \text{for $s = \frac{1}{\sqrt{2k}}$}
\end{align}
The constant $s$ is called the \emph{strength} of the magnetic system determined by the pair $(g,\sigma)$.\\
\\
A magnetic pair $(g,\sigma) \in \mathcal{M}(W) \times \Omega^2(W)$ will be called \textbf{Zoll} if for some $s \in \mathbb{R}$, the magnetic geodesic flow of the pair $(g,\sigma)$ of strength $s$ induces a free $S^1$-action on $S^*_gW$. In such a setting, given another pair $(\tilde g,\tilde\sigma) \in \mathcal{M}(W) \times \Omega^2(W)$, we will denote by $\chi(\tilde g, \tilde \sigma)$ the set of closed magnetic geodesics of the pair $(\tilde g,\tilde\sigma)$ of strength $s$ that are $C^2$-close to those of the pair $(g,\sigma)$ of the same strength. In this section, since we are interested in applications of Theorem \ref{systolic_ineq}, we will always fix a Zoll magnetic pair and consider the dynamics of another magnetic pair relative to this fixed Zoll magnetic pair. In such a setting, we will always that both magnetic systems are being considered at a fixed strength where the fixed (Zoll) pair is Zoll. \\
\\
It is also worth observing here that there are examples of magnetic systems that are Zoll at some energies but not at others. One such class of examples was constructed by Bimmerman in \cite{j_bimmerman} (see also \cite{paternain_horocycle}), we recall the relevant version of Bimmerman's statement here as Theorem \ref{zollness_of_mag_systems_kahler}.\\
\\
The first result of this section is the following; its proof can be found in Section \ref{mag_sys_ineq_general_proof}: 
\begin{theorem}\label{mag_sys_ineq_general}
Let $(W,g)$ be a closed, connected Riemannian manifold of dimension at least three with the property that there exists a cohomology class $C_0 \in H^2(W,\mathbb{R})$ admitting a representative $\sigma \in \Xi^2_{C_0}(W)$ such that the magnetic pair $(g,\sigma)$ is Zoll. Then, there exists a $C^3$-neighborhood $\mathcal{U} \subset \mathcal{M}(W) \times \Xi^2_{C_0}(W)$ of the pair $(g,\sigma)$ with that property that for any pair $(\tilde g,\tilde \sigma) \in \mathcal{U}$ satisfying the condition: $\text{Vol}_{\tilde g}(W) = \text{Vol}_{g}(W)$, the following holds:
\begin{align}\label{mag_sys_ineq_general_eqn}
\inf_{\gamma \in \chi(\tilde g,\tilde \sigma)}\Big(\text{length}_{\tilde g}(\gamma)-\int_{\pi_*\Gamma_{\gamma}} \tilde \sigma\Big) \leq \text{length}_{g}(\gamma_0) - \int_{\gamma_0}\eta
\end{align}
with equality holding if and only if the pair $(g,\sigma)$ is Zoll;
where $\eta \in \Omega^1(W)$ is any $1$-form such that $\sigma = \sigma_0+d\eta$, $\gamma \in \chi(\tilde g,\tilde \sigma)$ and $\gamma_0 \in \chi(g,\sigma)$ are such that their lifts to $S^*_gW$ can be connected by a short homotopy represented by the cylinder $\Gamma_{\gamma} \subset S^*_gW$.
\end{theorem}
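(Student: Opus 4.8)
The plan is to reduce Theorem \ref{mag_sys_ineq_general} to the odd-symplectic systolic inequality of Theorem \ref{systolic_ineq} by realising the magnetic geodesic flow as the characteristic foliation of an odd-symplectic form on the unit cosphere bundle. Fix the strength $s_0$ at which $(g_0,\sigma_0)$ is Zoll; after rescaling we may assume $s_0=1$. Set $\Sigma := S^*_{g_0}W$ and $p := \pi|_\Sigma : \Sigma \to W$, and put
\[
\Omega_0 := \big(d\lambda - \pi^*\sigma_0\big)\big|_{\Sigma}.
\]
By hypothesis the characteristic flow of $\Omega_0$ is a free $S^1$-action, so $\Omega_0$ is a Zoll odd-symplectic form representing $C_0^\Sigma := -p^*C_0\in H^2(\Sigma,\mathbb{R})$ (the exact term $d\lambda|_\Sigma$ dropping out). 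For a nearby pair $(g,\sigma)$ let $\psi_g : S^*_g W \to \Sigma$ be the fibrewise radial rescaling sending each $g$-unit covector to the $g_0$-unit covector on the same ray, and define
\[
\Omega_{g,\sigma} := (\psi_g^{-1})^*\big((d\lambda - \pi^*\sigma)\big|_{S^*_gW}\big)\in\Omega^2(\Sigma).
\]
Since $\pi\circ\psi_g^{-1}=p$ and $[\sigma]=[\sigma_0]=C_0$, the form $\Omega_{g,\sigma}$ is closed and cohomologous to $\Omega_0$. As odd-symplecticity is an open condition and $(g,\sigma)\mapsto\Omega_{g,\sigma}$ is $C^2$-continuous, there is a $C^2$-neighbourhood of $(g_0,\sigma_0)$ on which $\Omega_{g,\sigma}\in\Xi^2_{C_0^\Sigma}(\Sigma)$ lands inside the neighbourhood $\mathcal{U}$ furnished by Theorem \ref{systolic_ineq}. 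I note that $\dim W=n\geq 3$ forces $H^{2n-2}(W)=0$, hence $C_0^{n-1}=0$ and $(C_0^\Sigma)^{n-1}=0$, placing us exactly in the regime of Theorem \ref{systolic_ineq} (cf.\ Remark \ref{C_0_e_0_equiv}).

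Next I would identify the two quantities in Theorem \ref{systolic_ineq} with their magnetic counterparts. The characteristic leaves of $\Omega_{g,\sigma}$ correspond, via $\psi_g$ and the footpoint projection, to the closed magnetic geodesics of $(g,\sigma)$ at strength $1$, matching the set $\chi(g,\sigma)$. For such a leaf $\gamma$, choosing a short cylinder $C\subset T^*W$ from a reference Zoll leaf of $(g_0,\sigma_0)$ to $\gamma$ and applying Stokes to $d\lambda-\pi^*\sigma$ gives
\[
\mathcal{A}_{\Omega_{g,\sigma}}(\gamma)=\Big(\mathrm{length}_g(\gamma)-\int_{\Gamma_\gamma}\sigma\Big)-\Big(\mathrm{length}_{g_0}(\gamma_0)-\int_{\gamma_0}\eta\Big),
\]
where $\Gamma_\gamma=\pi(C)$ and I use that $\lambda$ integrates to the $g$-length along a unit-speed magnetic geodesic. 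The additive reference constant is dictated by the normalisation of $\mathcal{A}_{\Omega_{g,\sigma}}$ relative to $\Omega_0$ (equivalently $\sigma_0$), and rewriting the reference flux of $\sigma$ through $\gamma_0$ in terms of $\sigma_0$ and the primitive $\eta$ (with $\sigma=\sigma_0+d\eta$) produces the term $\int_{\gamma_0}\eta$; pinning down this constant is the most delicate bookkeeping of the argument.

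For the volume I would use Theorem \ref{Volume_formula} together with the fact that $Vol_{\Omega_0}(\Omega_{g,\sigma})$ equals the $\omega$-symplectic volume of the region of $T^*W$ between $S^*_{g_0}W$ and $S^*_gW$. Expanding $(d\lambda-\pi^*\sigma)^n$ and integrating over the $n$-dimensional codisk fibres, every term carrying a factor of $\pi^*\sigma$ has $dp$-degree strictly less than $n$ and hence integrates to zero along the fibres, so only the Liouville term $(d\lambda)^n$ survives and
\[
Vol_{\Omega_0}(\Omega_{g,\sigma})=\kappa_n\big(\mathrm{vol}_g(W)-\mathrm{vol}_{g_0}(W)\big),
\]
with $\kappa_n$ the volume of the Euclidean unit $n$-ball. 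In particular the hypothesis $\mathrm{vol}_g(W)=\mathrm{vol}_{g_0}(W)$ is exactly the condition $Vol_{\Omega_0}(\Omega_{g,\sigma})=0$.

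Finally I would invoke the ``in particular'' clause of Theorem \ref{systolic_ineq}: since $Vol_{\Omega_0}(\Omega_{g,\sigma})=0$, we obtain $\inf\mathcal{A}_{\Omega_{g,\sigma}}\leq 0$, with equality if and only if $\Omega_{g,\sigma}$ is Zoll. Substituting the action identity and taking the infimum over $\chi(g,\sigma)$ yields \eqref{mag_sys_ineq_general_eqn}; and $\Omega_{g,\sigma}$ is Zoll precisely when the magnetic flow of $(g,\sigma)$ is a free $S^1$-action at strength $1$, i.e.\ when $(g,\sigma)$ is Zoll at the same energy as $(g_0,\sigma_0)$, which supplies the equality case. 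The main obstacle is the second step: establishing the precise action identity with the correct reference constant $\int_{\gamma_0}\eta$, and verifying the $C^2$-continuity of $(g,\sigma)\mapsto\Omega_{g,\sigma}$ so that the neighbourhood of Theorem \ref{systolic_ineq} pulls back to a genuine $C^2$-neighbourhood of $(g_0,\sigma_0)$.
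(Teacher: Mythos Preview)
Your proposal is correct and follows essentially the same route as the paper. The paper packages your second and third steps as two separate propositions: Proposition~\ref{mag_sys_action_general_propsition} derives exactly the action identity you state (your flagged ``main obstacle''), and Proposition~\ref{vol_mag_sys_general} carries out the volume computation showing $Vol_{\Omega_0}(\Omega_{g,\sigma})$ is a positive multiple of $\mathrm{vol}_g(W)-\mathrm{vol}_{g_0}(W)$; the proof of Theorem~\ref{mag_sys_ineq_general} then amounts to invoking these two propositions together with the ``in particular'' clause of Theorem~\ref{systolic_ineq}, just as you outline. One minor remark: you do not actually need Theorem~\ref{Volume_formula} for the volume step---the paper computes $Vol_{\Omega_0}(\Omega_{g,\sigma})$ directly from the defining integral \eqref{Vol_function_def_eqn} over $S^*W$ rather than passing through the region between sphere bundles, but the dimensional/horizontal--vertical counting argument you describe is the same mechanism that makes only the Liouville term survive.
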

As observed by Benedetti and Kang in \cite[Lemma 4.5]{B-K-Odd_Symp}, the integral homology class, i.e., the class in $H_1(W,\mathbb{Z})$ represented by the magnetic geodesics of a Zoll magnetic pair, is torsion. If this class vanishes then by for every Zoll magnetic geodesic $\gamma$ there exists an immersed surface $\mathbb{D}_{\gamma} \hookrightarrow W$ whose boundary coincides with $\gamma$ \cite[Proposition 3.4.8]{Geiges_book}. We will call such a surface a capping surface for $\gamma$.\\
\\
We now fix a Zoll magnetic pair $(g,\sigma)$ such that the integral homology class represented by its magnetic geodesics is the zero class. Given another magnetic pair $(\tilde g,\tilde \sigma) \in \mathcal{M}(W)\times \Omega^2(W)$ such that the set $\chi(\tilde g,\tilde \sigma)$ is non-empty, we define the magnetic length of the magnetic geodesics in $\chi(\tilde g,\tilde \sigma)$ in the following way:

\begin{align}\label{mag_length_functional_def}
    &l_{\text{mag}}^{\tilde g,\tilde \sigma}: \chi(\tilde g,\tilde \sigma) \rightarrow \mathbb{R}\\\nonumber
    \gamma &\mapsto \text{length}_{\tilde g}(\gamma) - \int_{\mathbb{D}_{\gamma}} \tilde\sigma
\end{align}
where $\mathbb{D}_{\gamma}$ is a capping surface for the magnetic geodesic $\gamma$ obtained by concatenating to a capping surface $\mathbb{D}_{\gamma_0}$ of a magnetic geodesic $\gamma_0 \in \chi(g,\sigma)$, the projection to $W$ of a cylinder $\Gamma_{\gamma} \hookrightarrow S^*_gW$ parameterizing a short homotopy between the lifts of $\gamma$ and $\gamma_0$ to $S^*_gW$; a canonical lift exists for each such magnetic geodesic, see Section \ref{mag_dyn_riem_section} for more details. In this setting, the inequality in \eqref{mag_sys_ineq_general_eqn} can now be expressed in the following succinct form:
\begin{align}
    l_{\text{min}}^{\tilde g,\tilde\sigma} \leq c(g,\sigma)  
\end{align}
where the constant $c(g,\sigma)$ is the value of the magnetic length functional $l^{g,\sigma}_{\text{mag}}$ evaluated on a geodesic of the $(g,\sigma)$-magnetic system and $l_{\text{min}}^{\tilde g,\tilde\sigma}$ is defined in the following way:
\begin{align}\label{l_min_def_equation}
    l_{\text{min}}^{\tilde g,\tilde\sigma}:= \inf_{\gamma\in \chi(\tilde g,\tilde\sigma)} l_{\text{mag}}^{\tilde g,\tilde\sigma}(\gamma) 
\end{align}
When the manifold $W$ in Theorem \ref{mag_sys_ineq_general} is a K\"ahler manifold where the associated K\"ahler metric $g_0$ has constant holomorphic sectional curvature denoted by $\kappa$ (see Section \ref{mag_dynamics_on_kahler_section} for precise definitions), it was shown by Bimmermann (see Theorem \ref{zollness_of_mag_systems_kahler} below) that the magnetic pair $(g_0,\sigma_0)$ where $\sigma_0$ is the K\"ahler form, is Zoll at strengths satisfying: $s^2+\kappa >0$. A consequence of this result of Bimmermann's is also the fact that the integral homology class represented by Zoll magnetic geodesics is always zero. In this setting, we also have enough topological information to compute the Zoll polynomial explicitly, as a consequence, we obtain the next main result of this section; in it we will always assume that all the magnetic systems are being considered at a strength satisfying the condition $s^2+\kappa>0$. This result can be seen as a generalization of the main result in \cite{BK-mag_surfaces} and its proof can be found in Section \ref{mag_sys_ineq_proof}:
\begin{theorem}\label{mag_sys_ineq}
Consider a closed K\"ahler manifold $(M^{2n},\sigma_0,J)$ with the property that the associated K\"ahler metric $g_0$ has constant holomorphic sectional curvature $\kappa$ and denote by $C_0 \in H^2(M,\mathbb{R})$ the cohomology class represented by $\sigma_0$. Then, there exists a $C^3$-neighborhood $\mathcal{U} \subset \mathcal{M}(M) \times \Xi^2_{C_0}(M) $ of the pair $(g_0,\sigma_0)$ with the property that the magnetic system determined by any pair $(\tilde g,\tilde \sigma) \in \mathcal{U}$ satisfies the following local systolic inequality:
\begin{align}
    \Bigg(
        l_{\text{min}}^{\tilde g,\tilde \sigma}
        \Big(
            \kappa l_{\text{min}}^{\tilde g,\tilde \sigma}
            -&4\pi^2 a^2(1)
        \Big)
    \Bigg)^{n-1}
    \Bigg(
        \kappa l_{\text{min}}^{\tilde g,\tilde \sigma}
        \big(na^2(1)-1\big)
        -4\pi  a^2(1)
    \Bigg) \\\nonumber \leq& \frac{2^{2n+1}\pi^{2n}a^2(1)}{n!(2n-1)!} \Bigg(\frac{\text{Vol}_{\tilde g}(M)}{\text{Vol}_{g_0}(M)}-1\Bigg)
\end{align}
\begin{align}
     \Big(l_{\text{min}}^{\tilde g,\tilde \sigma}\Big)^{n-1} \leq \frac{\pi^n}{n!\;(2n-1)! }\Bigg(\frac{\text{Vol}_{\tilde g}(M)}{\text{Vol}_{g_0}(M)}-1\Bigg)
\end{align}
with equality holding if and only if the magnetic system determined by the pair $(\tilde g,\tilde \sigma)$ is also Zoll. If in addition, $\text{Vol}_{g_0}(M) = \text{Vol}_{\tilde g}(M)$ then, the above inequalities simplify to the following:
   \begin{align}\label{mag_sys_ineq_2}
  l_{\text{min}}^{g, \sigma} \leq a^2(1)\pi
   \end{align}
   once again with equality if and only if the magnetic pair $(\tilde g,\tilde \sigma)$ is Zoll; where $l_{\text{min}}^{g,\sigma}$ is as defined in \eqref{l_min_def_equation}. The constant $a^2(1)$ has the following expression:
   \begin{align}
    a^2(1)= \frac{2}{\kappa}\Big(\sqrt{s^2+\kappa} -s\Big) 
\end{align}
   \end{theorem}
When the K\"ahler manifold $M$ above is of dimension two, Theorem \ref{mag_sys_ineq} is a generalization of the main result in \cite{benedetti2019local}, and when $\text{Vol}_{\tilde g}(M) = \text{Vol}_{g_0}(M)$, it reduces to this result. This is because the following is true:
\begin{align}\label{bk_mag_3}
    a^2(1)\pi = \frac{2\pi}{\sqrt{s^2+\kappa}-s}
\end{align}
Combining \eqref{mag_sys_ineq_2} with the above equation and accounting for the sign change due to the assumption \cite[Equation (1.1)]{benedetti2019local}, we see that \eqref{mag_sys_ineq_2} reduces to the first inequality in \cite[Theorem 1.9]{benedetti2019local} as required.
\section*{Acknowledgments}
I am very grateful to my advisors, to Gabriele Benedetti for suggesting the problem to me and for his valuable help and guidance while working on the project and writing it down, and to Umberto Hryniewicz for numerous helpful discussions, for his advice on writing, and for the environment at MathGA. I am also very grateful to B.Albach, L.Asselle, V.Assenza, D.Bechara, J.Bimmermann, L.Dahinden U.Fuchs, R.Loiola, F.Morabito, and M.Voguel for several useful discussions. I am supported by the DFG SFB/TRR 191 “Symplectic Structures in Geometry,
Algebra and Dynamics”, Projektnummer 281071066-TRR 191.
\section{Proof of Theorem \ref{systolic_ineq}}\label{Proof_of_main_thm}
We begin by fixing an $S^1$-bundle $p: \Sigma \rightarrow M$ associated with $\Omega_0$ and a connection $1$-form $\alpha_0$. We now denote by $\mathcal{U} \subset \Xi^2_{C_0}(\Sigma)$ the neighborhood of $\Omega_0$ provided by Theorem \ref{Volume_formula}. Upon denoting by $\tilde{\mathcal{U}} \subset \Xi^2_{C_0}(\Sigma)$ the neighborhood of $\Omega_0$ provided by Theorem \ref{normalformthm}, we can assume without any loss of generality that $\mathcal{U}$ was chosen so that the following holds:
\begin{align}
   \mathcal{U} \subset \tilde{\mathcal{U}}
\end{align}
We now pick an arbitrary odd-symplectic form $\Omega \in \mathcal{U}$ and suppose that the following holds: $\text{Vol}_{\Omega_0}(\Omega) =0$. In this case, the required local systolic inequality is the following:
\begin{align}\label{sys_ineq_proof_1}
    \inf \mathcal{A}_{\Omega} \leq 0 \leq \sup\mathcal{A}_{\Omega}
\end{align}
It is shown in \cite[Proposition 7.4]{B-K-Odd_Symp} that if $\Omega$ is sufficiently close to $\Omega_0$ then both $\inf \mathcal{A}_{\Omega}$ and $\sup\mathcal{A}_{\Omega}$ lie in a neighborhood of zero and that the size of the neighborhood of zero can be controlled by the distance between $\Omega$ and $\Omega_0$. As a consequence, we can assume without any loss of generality that the neighborhood $\mathcal{U}$ was chosen so that $\inf \mathcal{A}_{\Omega}$ and $\sup\mathcal{A}_{\Omega}$ lie in a neighborhood of zero where the Zoll polynomial is monotonic. \eqref{sys_ineq_proof_1} can then be seen to follow from \cite[Proposition 7.4]{B-K-Odd_Symp} and Corollary \ref{variationalprinciple}.\\
\\
Suppose now that $\text{Vol}_{\Omega_0}(\Omega) \neq 0$ then, by Theorem \ref{Volume_formula}, the following holds:
\begin{align}\label{sys_ineq_volume_expansion}
  Vol_{\Omega_0}(\Omega) =  \int^1_0 \int_{\Sigma}  S\alpha_0 \wedge (\Omega_0 +rSd\alpha_0)^{n-1} \; dr + \int^1_0 \frac{1}{r} \int_{\Sigma} D(x,rS(x)) \; \alpha_0 \wedge \Omega_0^{n-1} \; dr
\end{align}
where $S$ is the $S^1$-invariant function provided by Theorem \ref{normalformthm}. We claim that the above expression for the volume functional is monotonic in the variable $S$. To prove this claim, we first consider the case where $S$ is constant. In this case, by Theorem \ref{Volume_formula}, the second summand in \eqref{sys_ineq_volume_expansion} vanishes. Let $\bar S \in C^{\infty}(M)$ be the quotient of $S$ by the relevant $S^1$-action on $\Sigma$ and we now project the first summand in \eqref{sys_ineq_volume_expansion} to $M$ by integrating along the fibers (see \cite{BottandTu} or \cite[Pg. 386]{B-K-Odd_Symp}) to obtain the following:
\begin{align}\label{monotonic_in_S_1}
  \int^1_0 \int_{\Sigma}  S\alpha_0 \wedge (\Omega_0 +rSd\alpha_0)^n \; dr  = \int^1_0 \int_{M}  \bar{S}(\omega_0 +r\bar{S} \nu)^n dr
\end{align}
where $\omega_0 \in \Omega^2(M)$ is a symplectic form satisfying: $p^*\omega_0 = \Omega_0$ and $\nu \in \Omega^2(M)$ is the $2$-form satisfying the relation $p^*\nu = d\alpha_0$.\\
\\
It can now readily be seen that the integral in \eqref{monotonic_in_S_1} is the Zoll polynomial and monotonicity of the expression \eqref{sys_ineq_volume_expansion} in the variable $S$ follows by shrinking the neighborhood $\mathcal{U}$ of $\Omega_0$ if necessary to ensure that $S$ is contained in the neighborhood of zero where the Zoll polynomial is monotonic.\\
\\
On the other hand, if $S$ is not constant then there exists a smooth function $Q: \Sigma \times
\mathbb{R} \times \mathbb{R} \rightarrow \mathbb{R}$ defined by the following relation:
\begin{align}\label{sys_ineq_proof_Q}
 Q(x,S(x),r) \; \alpha_0 \wedge \Omega_0^{n-1} \; dr = S\alpha_0 \wedge (\Omega_0 +rSd\alpha_0)^{n-1} \; dr
\end{align}
Upon differentiating both sides of \eqref{sys_ineq_proof_Q} with respect to $S$, we obtain the following: 
\begin{align}
    \frac{\partial Q}{ \partial S} \big|_{S=0}  \; \alpha_0 \wedge \Omega^{n-1}_0 = \alpha_0 \wedge \Omega_0^{n-1} 
\end{align}
We can therefore conclude that the following holds:
\begin{align}\label{sys_ineq_partial_Q}
    \frac{\partial Q}{ \partial S} \Big|_{S=0} =1
\end{align}
By Theorem \ref{Volume_formula}, given an $\epsilon >0$, there exists a $\delta >0$ such that if $\vert S\vert <\delta$ then, the following holds:
\begin{align*}
    \Bigg\vert \Bigg\vert\frac{1}{r} \frac{\partial D(x,rS)}{\partial S}\Bigg\vert\Bigg\vert_{C^0(\Sigma \times [-\delta,\delta])}  < \epsilon
\end{align*}
The above inequality combined with the observation in \eqref{sys_ineq_partial_Q} shows that $Vol_{\Omega_0}(\Omega)$ is monotonic in $S$ if $\vert S\vert <\delta$, a fact we can always guarantee by shrinking the neighborhood $\mathcal{U} \subset\Xi^2_{C_0}(\Sigma)$ is necessary.\\
\\
Since the expression in \eqref{Vol_function_def_eqn} is monotonic in the variable $S$, we obtain the following chain of inequalities:
\begin{align}\label{sys_ineq_1}
     \int^1_0 \int_{\Sigma}  &S_{\text{min}}\alpha_0 \wedge (\Omega_0 +rS_{\text{min}}d\alpha_0)^{n-1} \; dr \\\nonumber \leq \;\; &Vol_{\Omega_0}(\Omega) \\\nonumber \leq \int^1_0 \int_{\Sigma}  &S_{\text{max}}\alpha_0 \wedge (\Omega_0 +rS_{\text{max}}d\alpha_0)^{n-1} \; dr
\end{align}
where $S_{\text{min}}:= \inf_{x\in \Sigma} S(x)$ and $S_{\text{max}}:= \sup_{x \in \Sigma} S(x)$ and any of these inequalities is an equality if and only if $S$ is constant.\\
\\
Since the $2$-form $\nu \in \Omega^2(M)$ was chosen so that $p^*\nu = d\alpha_0$, upon projecting to the base $M$ of the $S^1$-bundle associated with $\Omega_0$ by integrating along the $S^1$-fibers, for every constant $S_0$ we obtain:
\begin{align}\label{sys_ineq_zoll_poly_vol_relation_leq}
       \int^1_0 \int_{\Sigma}  S_{0}\alpha_0 \wedge &(\Omega_0 +rS_{0}d\alpha_0)^{n-1} \; dr =  \int^1_0 \int_{M}  \bar{S}_{0}(\omega_0 +r\bar{S}_{0} \nu)^{n-1} dr = \\\nonumber &\int^1_0 \langle (c_0+r\bar{S}_{0}e_0)^{n-1} , [M]\rangle \;dr = P(\bar{S}_{0})\end{align}
where $c_0 \in H^2(M, \mathbb{R})$ is the cohomology class represented by $\omega_0 \in \Omega^2(M)$ and $e_0 \in H^2(\Sigma, \mathbb{R})$ is $(-1)$ times the real Euler class of the $S^1$-bundle associated with $\Omega_0$.\\
\\
Replacing the constant $S_0$ in \eqref{sys_ineq_zoll_poly_vol_relation_leq} with $S_{\text{min}}$ and $S_{\text{max}}$ and using \eqref{sys_ineq_1} and noting that the action and the volume are invariant under diffeomorphisms isotopic to the identity, we obtain:
\begin{align}
   P(\inf \mathcal{A}_{\Omega}) \leq P(\bar{S}_{\text{min}}) \leq Vol_{\Omega_0}(\Omega) \leq P(\bar{S}_{\text{max}}) \leq P(\sup \mathcal{A}_{\Omega})
\end{align}
with equality holding if and only if $S$ is constant.\\
\\
Finally, recall from Corollary \ref{variationalprinciple} that $S$ is constant if and only if $\Omega$ is Zoll and in this case $\inf \mathcal{A}_{\Omega} = \min S = \max S = \sup \mathcal{A}_{\Omega}$.

\section{Examples of Zoll odd-symplectic forms}\label{Examples_section}
This section is dedicated to a result in which we present the construction of a large class of examples of Zoll odd-symplectic forms that are not necessarily of contact type. We preface the statement of this result by giving a brief outline of the geometric setup: let $(W,\omega)$ be a connected symplectic manifold of an arbitrary dimension and $Q \hookrightarrow W$ be a closed, connected, symplectic submanifold. Then, $Q$ can be realized as the unique Bott non-degenerate minimum of a smooth function $H_0:W \rightarrow \mathbb{R}$. In this setting, the following is the main result of this section:
\begin{theorem}\label{Examples_of_Zoll_odd-symplectic_forms}
    Let $(W,\omega)$ be a connected symplectic manifold of an arbitrary dimension. Then, for every closed, connected, embedded symplectic submanifold $Q \hookrightarrow W$ of a positive dimension then, there exists tubular neighborhood $T(Q) \subset W$ of $Q$ and a smooth function $H_0 \in C^{\infty}(T(Q))$ such that for every regular value $r \in \mathbb{R}$ of $H_0$, the following odd-symplectic form is Zoll:
    \begin{align}
        \omega\vert_{H^{-1}(r)}
    \end{align}
    Moreover, the above odd-symplectic form will be exact if and only if $\omega\vert_Q$ is exact.
\end{theorem}
\begin{proof}
We denote by $p: NQ^{\perp_{\omega}} \rightarrow Q$ the symplectic orthogonal bundle to $Q$ and observe that the pair $(NQ^{\perp_{\omega}},\bar\omega)$ is a symplectic vector bundle over $Q$, where $\bar \omega:= \omega|_{T(Q)}$.\\ 
\\
We now fix an almost complex structure $J_0$ on $NQ^{\perp_{\omega}}$ that is compatible with $\bar{\omega}$ and denote the resulting fiber metric by $\bar g$. We then pick a connection $\nabla$ on $NQ^{\perp_{\omega}}$ such that the following hold:
\begin{align}
    \nabla g_0 \equiv 0 \qquad \nabla J_0 \equiv 0
\end{align}
$\nabla$ allows us to decompose $TNQ^{\perp_{\omega}}$ into the vertical (denoted by $\mathcal{V}$) and horizontal (denoted by $\mathcal{H}$) distributions in the following way where we denote a base point in $NQ^{\perp_{\omega}}$ by $(q,p) \in NQ^{\perp_{\omega}}$:
\begin{align}
    pr_{\mathcal{H}}: &T_{(q,p)}NQ^{\perp_{\omega}} \rightarrow \mathcal{\mathcal{H}}(q,p)  \\\nonumber
   &\xi \mapsto \xi_{\mathcal{H}} := d_{p}p(\xi) 
\end{align}
\begin{align}
    pr_{\mathcal{V}}: &T_{(q,p)}NQ^{\perp_{\omega}}\rightarrow \mathcal{V}(q,p) \\\nonumber
   &\xi \mapsto \xi_\mathcal{V} := \nabla_{t}(Z(0)) 
\end{align}
where $Z:(-\epsilon, \epsilon) \rightarrow NQ^{\perp_{\omega}}$ is a curve such that $Z(0) = p$ and $\Dot{Z}(0) = \xi$ and $\nabla_t$ represents the covariant derivative with respect to $\nabla$. Therefore, we can write $TNQ^{\perp_{\omega}}$ in the following way:
\begin{align}\label{V_H_TW}
    TNQ^{\perp_{\omega}} \cong\mathcal{H} \oplus  \mathcal{V} 
\end{align}
In these coordinates, the following is called the angular $1$-form on $NQ^{\perp_{\omega}}$:
\begin{align}\label{angular_tau_def_eqn}
    \tau_{(q,p)}(\xi):= \bar g(\mathcal{P}_{\mathcal{V}}(\xi),J_0p) = \bar\omega(\mathcal{P}_{\mathcal{V}}(\xi),p) \;\;\;\text{for $q\in Q$, $p \in NQ_q^{\perp_{\omega}}$ and $\xi \in TNQ^{\perp_{\omega}}$} 
\end{align}
where $\mathcal{P}_{\mathcal{V}}: TNQ^{\perp_{\omega}} \rightarrow \mathcal{V}$ is the projection to the vertical distribution. We now claim that if the tubular neighborhood $NQ^{\perp_{\omega}}$ is chosen to be sufficiently small, the following $2$-form is a symplectic form on $NQ^{\perp_{\omega}}$:
\begin{align}
    \Omega_{\text{cpl}}:=\frac{1}{2} d\tau-p^*\omega
\end{align}
where we denote by $p: NQ^{\perp_{\omega}} \rightarrow Q$, the bundle projection and by a slight abuse of notation, we denote by $\omega$ the restriction $Q\big|_{\omega}$. Indeed, observe that from the definition of the angular $1$-form $\tau$, the following holds at the zero-section of $NQ^{\perp_{\omega}}$:
\begin{align}
    \frac{1}{2}d\tau_{(q,0)}(\xi_\mathcal{V},\zeta_{\mathcal{V}}) = \bar \omega(\xi_\mathcal{V},\zeta_{\mathcal{V}}) \neq 0
\end{align}
The final equality above follows from the fact that $(NQ^{\perp_{\omega}},\bar \omega)$ is a symplectic vector bundle. Similarly, the following also holds at the zero-section:
\begin{align}
     \frac{1}{2}d\tau_{(q,0)}(\xi_\mathcal{H},\zeta_{\mathcal{H}}) = 0
\end{align}
And we obtain the following:
\begin{align}
    -p^*\omega_{(q,0)}(\xi_\mathcal{H},\zeta_{\mathcal{H}}) = -\omega_x(\xi_\mathcal{H},\zeta_{\mathcal{H}}) \neq 0
\end{align}
As a consequence of the above observations, at the zero-section, $\Omega_{\text{cpl}}$ admits the following block-diagonal decomposition with respect to the orthogonal splitting \eqref{V_H_TW}:
\begin{align}\label{coupling_form_diagonal}
    \Omega_{\text{cpl}}(x,0)= \begin{pmatrix}
        -\omega_Q &0 \\
        0& d\tau_{(x,0)}
    \end{pmatrix}
\end{align}
Also by the discussion above, each of the diagonal blocks in \eqref{coupling_form_diagonal} is non-degenerate; therefore, the following holds:
\begin{align}
    \text{det}_{(x,0)} \big(\Omega_{\text{cpl}}\big) \neq 0
\end{align}
Picking a chart $U\subset Q$ and trivializing the bundle $NQ^{\perp_{\omega}}$ over $U$, we see that by \eqref{coupling_form_diagonal}, the following function is smooth on $U$:
\begin{align}
 \text{det}\big(\Omega_{\text{cpl}}&\big): NQ^{\perp_{\omega}}\big\vert_{U} \rightarrow \mathbb{R}\\\nonumber 
 (q,p&) \mapsto \text{det}\big(\Omega_{\text{cpl}}\big)(q,p)
\end{align}
As a consequence, the following holds:
\begin{align}
    \lim_{q\rightarrow 0} \text{det}\big(\Omega_{\text{cpl}}\big)(q,p) = \text{det}\big(\Omega_{\text{cpl}}\big)(q,0)
\end{align}
Therefore, there exists an $\epsilon_U>0$ such that the following holds, as a consequence of the fact that $\text{det}\big(\Omega_{\text{cpl}}\big)(q,0) \neq 0$
\begin{align}
    \text{det}\big(\Omega_{\text{cpl}}\big)(q,p) \neq 0 \qquad \forall\; (q,p) \; \text{such that } \vert\vert p\vert\vert_{\bar g}^2 < \epsilon_U
\end{align}
Finally, since $Q$ is compact, we can cover it by finitely many charts of the form used above, and repeating the above procedure for each such chart, we see that there exists an $\epsilon>0$ such that the $2$-form $\Omega_{\text{cpl}}$ is non-degenerate on the following tubular neighborhood of the zero-section of the bundle $NQ^{\perp_{\omega}}$:
\begin{align}
    T(Q):=\big\{(q,p)\in NQ^{\perp_{\omega}}\; \vert \; \vert\vert p\vert\vert^2_{\bar g} < \epsilon \big\}
\end{align}
Consider now the following Hamiltonian on the symplectic manifold $(T(Q),\Omega_{\text{cpl}})$:
\begin{align}
    H_0: &T(Q) \longrightarrow \mathbb{R}\\\nonumber
    (q,&p)\mapsto -\pi \vert\vert p \vert\vert_{\bar g}^2 \;\;\;\; \text{where $\pi \in \mathbb{R}$ is the real constant} 
\end{align}
We now claim that the following Hamiltonian vector field is full-periodic with prime period always equal to one:
\begin{align}
    \iota_{X_0}\Omega_{\text{cpl}} =  dH_0
\end{align}
Indeed, the Hamiltonian vector field $X_0$ is the one generated by the fiber-wise rotation $e^{2\pi J_0t}:TT(Q) \rightarrow TT(Q)$. To see this, we begin by observing that the differential $dH_0$ always evaluates a zero on vectors contained in the horizontal distribution of the splitting \eqref{V_H_TW}. So, the vector field $X_0$ is completely contained in the vertical distribution of $TT(Q)$ determined by chosen connection $\nabla$. As a consequence, the following holds:
\begin{align}
    \iota_{X_0}p^*\omega=0
\end{align}
From the definition of $H_0$, the following is also clear:
\begin{align}
    dH_0(\xi)=-2\pi \bar g(p,\xi) \qquad \forall\; \xi \in \mathcal{V}(q,p)
\end{align}
We now observe that by Weinstein's neighborhood theorem \cite[Theorem 3.4.10]{mcduffsalamon2017introduction}, the two symplectic manifolds $(T(Q),\bar \omega)$ and $(T(Q),\Omega_{\text{cpl}})$ are symplectomorphic. Therefore, the following holds:
\begin{align}
    \frac{1}{2}d\tau(\cdot,J\cdot) = \bar g(\cdot,\cdot)\big\vert_{\mathcal{V}}
\end{align}
We now denote by $2\pi Jp$ the vector field generated by the fiber-wise rotation $e^{2\pi J_0t}:TT(Q) \rightarrow TT(Q)$, and compute:
\begin{align}
    \iota_{2\pi Jp}\frac{1}{2}d\tau(\xi) = \bar g(2\pi Jp,\xi) = dH_0(\xi) \qquad \forall \; \xi \in \mathcal{V}(q,p)
\end{align}
Finally, observe that the inclusion $\iota: Q \hookrightarrow T(Q)$ is a deformation retraction since the following holds:
\begin{align}
    \iota \circ p = \text{Id}_Q
\end{align}
Therefore, if $p^*\omega$ is exact in $T(Q)$ the following holds:
\begin{align}
    0=\iota^*[p^*\omega]=[\omega] \in H^2(Q,\mathbb{R})
\end{align}
Conversely, if $\omega\vert_Q = d\beta $ for some $\beta \in \Omega^1(Q)$ then,
\begin{align}
    p^*\omega=p^*d\beta =d(p^*\beta)
\end{align}
\end{proof}
It is worth observing here that in the three-dimensional setting, as observed in \cite{rohil}, the trajectories of any volume-preserving flow on a closed $3$-manifold can be realized as the characteristic foliation of an odd-symplectic form. Therefore, if such a flow induces a free $S^1$-action on the underlying manifold, the corresponding odd-symplectic form will be Zoll. Such odd-symplectic forms will not be of contact type if and only if the associated $S^1$-bundle is trivial and they have been completely classified up to orientation-preserving diffeomorphisms of the associated $S^1$-bundles, see \cite[Proposition 1.11]{B-K-Odd_Symp}.\\
\\
In Theorem \ref{Examples_of_Zoll_odd-symplectic_forms} above, if the submanifold $Q$ has exactly half the dimension of $W$, the example falls into the class of Hamiltonian dynamical systems called magnetic systems In \cite[Theorem A \& Theorem B]{johanna_negative_curvature}, Bimmermann constructs examples of Zoll odd-symplectic forms that are not of contact type within the paradigm of such Hamiltonian systems. In fact, the construction of Zoll odd-symplectic forms in Theorem \ref{Examples_of_Zoll_odd-symplectic_forms} can be thought of as a generalization of the ideas leading to the proof of Bimmermann's result to a more general geometric context.
\section{Proof of the normal form theorem}\label{normal_form_section}
Recall the setup of Theorem \ref{normalformthm}: $(\Sigma,\Omega_0)$ is a connected $2n-1$-dimensional Zoll odd-symplectic manifold; we denote by $C_0 \in H^2(\Sigma,\mathbb{R})$ the class represented by $\Omega_0$. We also fix an $S^1$-bundle $p: \Sigma \rightarrow M$ associated with $\Omega_0$ and a connection $1$-form $\alpha_0 \in \Omega^1(\Sigma)$ and we denote by $R_0$ the vector field fixed by $\alpha_0$ and generating the relevant $S^1$-action on $\Sigma$. In particular $R_0$ is the Hamiltonian vector field of the pair $(\alpha_0,\Omega_0)$. We also fix a Riemannian metric $g$ on $\Sigma$ that is invariant with respect to the flow of $R_0$.\\
\\
Since we are working with smooth (i.e $C^{\infty}$) objects, the set $\Xi^2_{C_0}(\Sigma)$ of representatives of the cohomology class $C_0$ can be given the $C^k$-topology for any $k \geq 0$. So, we fix the $C^1$-topology on $\Xi^2_{C_0}(\Sigma)$ and note that by Lemma \ref{Reeb_dist_lemma} there exists a neighborhood $\mathcal{U} \subset \Xi^2_{C_0}(\Sigma)$ of $\Omega_0$ such that $\alpha_0$ is positively transverse to $\ker(\Omega)$ for any odd-symplectic form $\Omega \in \mathcal{U}$. So, upon fixing such a neighborhood $\mathcal{U}$ of $\Omega_0$ we denote by $R_{\Omega}$ the Hamiltonian vector field of the pair $(\alpha_0,\Omega)$ for an odd-symplectic form $\Omega \in \mathcal{U}$. Lemma \ref{Reeb_dist_lemma} in the Appendix tells us that for each $k \geq 0$ there exists a monotonically increasing continuous function $\sigma_k: \mathbb{R} \rightarrow \mathbb{R}_{\geq 0}$ such that:
\begin{align}
    ||R_{\Omega} - R_0||_{C^k} \leq \sigma_{k}(||\Omega - \Omega_0||_{C^{k}})
\end{align}
for $\Omega \in \mathcal{U}$; we refer to such functions as moduli of continuity.  Using this fact and \cite[Theorem 2.1]{Abbondandolo-Benedetti} we can assume without loss of generality that the neighbourhood $\mathcal{U}$ is chosen such that for any $\Omega \in \mathcal{U}$ the following holds:
\begin{align}\label{bottkol_decomp_for_reeb}
     hu^*R_{\Omega} = R_{0} - \mathcal{P}_{u}[V]
\end{align}
such that:\begin{enumerate}
    \item $u:\Sigma \rightarrow \Sigma$ is a diffeomorphism isotopic to the identity.
    \item $V$ is a vector field on  $\Sigma$ satisfying $\mathcal{L}_{R_{0}} V =0$ and $g(R_{0}, V) =0 $.
    \item $h: \Sigma \rightarrow \mathbb{R}$ is a function satisfying $\mathcal{L}_{R_{0}} h =0$. 
    \item $\mathcal{P}_{u}: T\Sigma \rightarrow T\Sigma$ is a bundle isomorphism dependent on the diffeomorphism $u$ and lifting the identity. 
    \item For each integer $k \geq 0$ there exist moduli of continuity $\tilde{\sigma}_k$ satisfying:
\begin{align}\label{normal_form_bounds_1}
 \max \Big\{dist_{C^{k+1}}(u,Id_{\Sigma}), &\vert\vert h-1\vert\vert_{C^{k+1}}, \vert\vert V\vert\vert_{C^{k+1}}, \vert\vert\mathcal{P}_u -Id \vert\vert_{C^{k}} \Big\} \\\nonumber \leq \;\; &\tilde{\sigma}_{k+1}(\vert\vert R_{\Omega} - R_{0} \vert\vert_{C^{k+1}})
\end{align}
\end{enumerate}
Recall that we denote by $\mathfrak{h}$ the homotopy class of the fibers of $\Sigma \xrightarrow{p} M$ and by $\Lambda_{\mathfrak{h}}(\Sigma)$ the set of short loops in $\Sigma$ (c.f Section \ref{action_section}). Since we have fixed a neighborhood $\mathcal{U} \subset \Xi^2_{C_0}(\Sigma)$ of $\Omega_0$ such that $\alpha_0$ is positively transverse to any odd-symplectic form $\Omega \in \mathcal{U}$ and \eqref{bottkol_decomp_for_reeb} holds for the Hamiltonian vector field $R_{\Omega}$, we can now fix an arbitrary odd-symplectic form $\Omega \in \mathcal{U}$ and define $\Tilde{\Omega}:= u^*\Omega$ so that $hR_{\Tilde{\alpha_0}, \Tilde{\Omega}} = R_{0} - \mathcal{P}_{u}(V)$ where $\Tilde{\alpha_0}:= u^*\alpha_0$. By \cite[Lemma 1.1]{Abbondandolo-Benedetti} the following bounds hold:
\begin{align}\label{bounds_of_omega_tilde_minus_omega}
    &\vert \vert \tilde{\Omega} - \Omega_0\vert \vert_{C^{k}} \leq \vert \vert\tilde{\Omega} - \Omega\vert \vert_{C^{k}} + \vert \vert\Omega - \Omega_0\vert \vert_{C^k } \\ \leq \nonumber
    \vert \vert\Omega\vert \vert_{C^{k+1}} \Big(1+ &\vert \vert du\vert \vert^{k+2}_{C^{k}} \Big) dist_{C^{k+1}}(u,\text{Id}_{\Sigma})  +\vert \vert\Omega - \Omega_0\vert \vert_{C^k}  \leq \sigma_{k+1}(\vert \vert\Omega - \Omega_0\vert \vert_{C^{k+1}})
\end{align}
The last inequality above is justified by \eqref{normal_form_bounds_1} and by the fact that by Lemma \ref{Reeb_dist_lemma} for each such $k$, there exists a modulus of continuity $\sigma_{k}$ such that:
\begin{align}
     \tilde{\sigma}_{k}(\vert \vert R_{\Omega} - R_{0} \vert \vert_{C^{k}}) \leq \sigma_{k}( \vert \vert \Omega - \Omega_0\vert \vert_{C^{k}})
\end{align}
The function $S_{\tilde{\Omega}}: \Sigma \rightarrow \mathbb{R}$ is defined in the following way:
\begin{align}\label{def_of_S}
     S_{\tilde{\Omega}}(x):= \mathcal{A}_{\tilde{\Omega}}(\bar{\Gamma}_{x}) \;\;\;\text{where $\bar{\Gamma}_{x}$ is the homotopy $\bar{\Gamma}_x(\cdot,t) = p_x$ $\forall t \in [0,1]$   } x \in \Sigma 
\end{align}
where $\mathcal{A}_{\tilde{\Omega}}$ is defined in \eqref{action_functional_eq}.\\
\\
Given a representation $\tilde{\Omega} = \Omega_0 + d\alpha$ it is easy to see that $S_{\tilde{\Omega}}$ has the following convenient expression:
\begin{align}\label{def_of_S_ind}
    S_{\tilde{\Omega}}(x):= \int_{p_x} \alpha \;\;\; \text{for } x\in \Sigma
\end{align}
By \cite[Proposition 6.7]{B-K-Odd_Symp} the representation of the function $S_{\Tilde{\Omega}}$ in \eqref{def_of_S_ind} is independent of the choice of parametrizing $1$-form $\alpha$ since our assumption that $C_0^{n-1} =0$ implies that the fibers of the bundle $\Sigma \xrightarrow{p} M$ are null-homologous \cite[Lemma 4.5]{B-K-Odd_Symp}. Since we have fixed $\Tilde{\Omega}$, we suppress the subscript in $S_{\Tilde{\Omega}}$ for ease of notation. The function $S$ is invariant under the flow of $R_0$ and note that for any $1$-form $\alpha$ such that $\Tilde{\Omega} = \Omega_0 + d\alpha$, the difference $\alpha - S\alpha_0$ satisfies:
\begin{align}
    \int_{\mathfrak{p}_x} \alpha - S\alpha_0 =0 \;\; \text{forall $x \in \Sigma$}
\end{align}
\\
\\
Therefore, by \cite[Lemma 1.3]{Abbondandolo-Benedetti} the following is true:
\begin{align}\label{alpha_eta_alpha_0}
    \alpha = S\alpha_0 + \eta +df
\end{align}
where $f$ is some smooth function and $\eta \in \Omega^1(\Sigma)$ satisfies $\iota_{R_{0}} \eta =0$. Therefore $\Tilde{\Omega}$ has the following expression:
\begin{align}\label{Omega_tilde_full_expansion}
    \Tilde{\Omega} = \Omega_0 + dS \wedge \alpha_0 + Sd\alpha_0 + d\eta
\end{align}
We will denote by $\Phi^t_0$ the flow of the vector field $R_0$ and we now make the following definition:
\begin{align}\label{def_of_K}
    K_{u}: R_{0}^{\perp} &\looongrightarrow [R^{\perp}_{0}]^* \nonumber\\
X &\looongmapsto \int_{S^1}  u^*\Omega(\mathcal{P}_{u}(d\Phi^t_0(X)), \; d\Phi^t_0\;\cdot \;) \; dt
\end{align}
Since $\Omega$ is $C^1$-close to $\Omega_0$ Lemma \ref{Reeb_dist_lemma} tells us that $R_{\Omega}$ is $C^1$-close to $R_0$ therefore, by the argument in \cite[Claim 3.9]{Kerman} the map $K_{\text{Id}_{\Sigma}}$ is invertible. Since the neighborhood $\mathcal{U}$ was chosen so that $u$ is $C^1$-close to the identity on $\Sigma$, using Neumann's series argument, we know that non-degeneracy is an open condition. So, we see can conclude that the map $K_u$ is also invertible. \\
\\
We know from \cite[Pg. 33]{B-K-Odd_Symp} the function $\mathcal{A}_{\Tilde{\Omega}}$ is a primitive of the following $1$-form:
\begin{align}\label{action_form_def_eqn}
 \mathfrak{a}_{\Tilde{\Omega}}(\xi(t))|_{\gamma}:= \int_{S^1} \Tilde{\Omega}(\xi(t), \Dot{\gamma}(t)) \text{ } \forall \gamma \in \Lambda_{\mathfrak{h}}(\Sigma) \text{ and } \xi \in T_{\gamma}\Lambda_{\mathfrak{h}}(\Sigma)
\end{align}
So, it is clear that $dS$ has the following form:
\begin{align}\label{dS=K(V)}
dS|_x[w] = d\mathcal{A}_{\tilde{\Omega}}\big|_{\bar{\Gamma}_x}[d\Phi^t_0 (w)] \stackrel{\eqref{action_form_def_eqn}}{=} \int_{S^1} \tilde{\Omega}(d\Phi^t_0(w) ,\frac{ \partial p_x}{\partial t}) \; dt \\\nonumber \stackrel{\eqref{bottkol_decomp_for_reeb}}{=}  \int_{S^1} \tilde{\Omega}( \mathcal{P}_u((V \circ \Phi^t_0)) - h(\Phi^t_0) R_{\tilde{\Omega}}(u \circ \Phi^t_0), \; d\Phi^t_0(w)) \; dt \\\nonumber
= \int_{S^1} \tilde{\Omega}( \mathcal{P}_u((V \circ \Phi^t_0)), \; d\Phi^t_0 (w) \;) \; dt = K_u(V)[w]
\end{align}
where $\frac{ \partial p_x}{\partial t} = \frac{ \partial \Phi^t_0(x)}{\partial t}$ and the final equality follows from the fact that $\mathcal{L}_{R_0}(V) =0$.\\
\\
So using \eqref{Omega_tilde_full_expansion}, \eqref{def_of_K} and \eqref{dS=K(V)}, the contraction $\iota_{R_{0}} d\eta$ can be written as:
\begin{align}\label{decompositon_of_odd_symp_forms_nft_proof_2}
    \iota_{R_{0}} d\eta = \iota_{R_0} \tilde{\Omega} + \iota_{R_0} Sd\alpha_0 -\iota_{R_0} \alpha_0 \wedge dS
   \stackrel{\eqref{bottkol_decomp_for_reeb}}{=} \iota_{\mathcal{P} \circ K_{u}^{-1}(dS)} \tilde{\Omega} - dS
\end{align}
where in the second equality above, we have used the fact that since $\alpha_0$ is a connection $1$-form for the $S^1$-bundle $p: \Sigma \rightarrow M$, there exists a $2$-form $\nu \in \Omega^2(M)$ such that $p^*\nu = d\alpha_0$. As a consequence, the contraction $\iota_{R_0} d\alpha_0$ vanishes. \\
\\
We now define the endomorphism $\mathcal{F}: T^*\Sigma \rightarrow T^*\Sigma$ in the statement of Theorem \ref{normalformthm} in the following way
\begin{align}\label{def_of_F}
    \mathcal{F}(\lambda) = \iota_{\mathcal{P}\circ K^{-1}_u(\lambda|_{[R_0]^{\perp}})} \Tilde{\Omega} - \lambda|_{[R_0]^{\perp}} \;\; \text{for $\lambda \in T^*\Sigma$}
\end{align}
where $[R_0]^{\perp} \subset T\Sigma$ is the distribution that is perpendicular to the linear span of $R_0$ with respect to the fixed invariant metric $g$.\\
\\
Clearly, $\mathcal{F}$ is a linear endomorphism and since $\mathcal{P}$ and $K_u$ lift the identity on $\Sigma$, so does $\mathcal{F}$.\\
\\
Putting \eqref{alpha_eta_alpha_0}, \eqref{decompositon_of_odd_symp_forms_nft_proof_2} and \eqref{def_of_F} together we see that:
\begin{align}\label{tilda_omega_final_form}
    \Tilde{\Omega} = \Omega_0 + d(S\alpha_0 + \eta)
\end{align}
such that $\eta \in \Omega^1(\Sigma)$ satisfies:
\begin{align}
  \iota_{R_0} \eta =0 \;\;\; \& \;\;\; \iota_{R_0} d\eta = \mathcal{F}(dS)
\end{align}
So, in order to finish the proof of Theorem \ref{normalformthm}, we need to only establish \eqref{bounds_odd_symp}.\\
\\
Towards that end, we first remind the reader that by Lemma \ref{Reeb_dist_lemma} we can assume without any loss of generality that the neighborhood $\mathcal{U}$ is chosen such that for each $k \geq 1$ there exists a modulus of continuity $\sigma_k$ such that:
\begin{align}\label{normal_form_bounds_2}
    \tilde{\sigma}_{k}(\vert\vert R_{\Omega} - R_{0}\vert\vert_{C^{k}}) \leq \sigma_{k}(\vert\vert\Omega - \Omega_0\vert\vert_{C^{k}})
\end{align}
In each step of the sequel below, we will successively obtain new moduli of continuity by shrinking the previous ones but for ease of notation, we will continue to denote these by $\sigma_k$.\\
\\
By Proposition \ref{elliptic_bounds_on_1_forms} and \eqref{bounds_of_omega_tilde_minus_omega} there exists an $\Hat{\alpha} \in \Omega^1(\Sigma)$ and moduli of continuity $\sigma_k$ for $k \geq 1$ such that 
\begin{align}\label{nft_proof_alpha_hat_bound}
    &\tilde{\Omega} = \Omega_0 + d\hat{\alpha} \\\nonumber 
    \vert\vert \hat{\alpha}\vert\vert_{C^k}& \leq \sigma_{k}(\vert\vert\Omega - \Omega_0\vert\vert_{C^{k}})
\end{align}
By \cite[Lemma 4.5]{B-K-Odd_Symp}, our assumption that $C_0^{n-1}=0$ implies that the representation of $S$ in \eqref{def_of_S_ind} is independent of the $1$-form $\alpha$, therefore:
\begin{align*}
    S_{\tilde{\Omega}}(x) = \int_{p_x} \hat{\alpha}
\end{align*}
So, for the sake of continuity, we will continue to write $\alpha$ for $\hat{\alpha}$.\\
\\
We therefore obtain the following:
\begin{align}\label{bound_on_S}
    \vert\vert S_{\tilde{\Omega}}\vert\vert_{C^{k}} \leq \sigma_{k+1}(\vert\vert\Omega - \Omega_0\vert_{C^{k+1}})
\end{align}
We know from \eqref{dS=K(V)} that $dS = K_u(V)$ for $V$ as in \eqref{bottkol_decomp_for_reeb}. In addition, by \cite[Lemma 1.1]{Abbondandolo-Benedetti} the following holds:
\begin{align}\label{nft_bound_3}
    \vert\vert \tilde{\Omega}\vert\vert_{C^k} \leq \vert\vert\Omega\vert\vert_{C^k} \cdot \vert\vert du\vert\vert^2_{C^k}(1+\vert\vert du\vert\vert_{C^{k-1}})
\end{align}
Using \eqref{normal_form_bounds_1} and combining \eqref{def_of_K} with \eqref{nft_bound_3} we obtain the following:
\begin{align}\label{bound_on_dS}
    \vert\vert dS_{\tilde{\Omega}}\vert\vert_{C^k} \leq \vert\vert K_u\vert\vert_{C^k} \vert\vert V\vert\vert_{C^k} \leq \sigma_{k+1}(\vert\vert \Omega - \Omega_0\vert\vert_{C^{k+1}})
\end{align}
Using the bound \eqref{nft_proof_alpha_hat_bound} on the $1$-form we now call $\alpha$ and \cite[Lemma 1.3]{Abbondandolo-Benedetti}, the following holds:
\begin{align}\label{nft_eta_bound_1}
    \vert\vert \eta\vert\vert_{C^k} \leq \vert\vert \alpha -S\alpha_0\vert\vert_{C^k} + \vert\vert d\iota_{R_0}( \alpha -S\alpha_0)\vert\vert_{C^k}
\end{align}
In light of this fact, \cite[Lemma 1.1]{Abbondandolo-Benedetti} allows us to deduce the following bounds:
\begin{align}\label{nft_eta_bound_2}
     \vert\vert\alpha -S\alpha_0\vert\vert_{C^k} \leq &\vert\vert\alpha - u^*\alpha\vert\vert_{C^k} + \vert\vert u^*\alpha - S\alpha_0\vert\vert_{C^k} \\ \nonumber \leq \Big[\vert\vert\alpha\vert\vert_{C^{k+1}} \text{ } dist_{C^{k+1}}(u,Id_{\Sigma})&(1+\vert\vert du\vert\vert^{k+1}_{C^k}) \Big] + \vert\vert u^*\alpha\vert\vert_{C^k} + \vert\vert S\alpha_0\vert\vert_{C^k} \\\nonumber \leq &\sigma_{k+1}(\vert\vert\Omega -\Omega_0\vert\vert_{C^{k+1}})
\end{align}
Similarly, we arrive at the following bounds for $\vert\vert d(\iota_{R_0} \alpha -S\alpha_0)\vert\vert_{C^k}$:
\begin{align}\label{nft_eta_bound_3}
    \vert\vert d(\iota_{R_0} \alpha -S\alpha_0)\vert\vert_{C^k} \leq &\vert\vert d(\iota_{R_0}\alpha)\vert\vert_{C^k} + \vert\vert dS \wedge \alpha_0 \vert\vert_{C^k} + \vert\vert S d\alpha_0\vert\vert_{C^k} \\\nonumber \leq &\sigma_{k+1}(\vert\vert \Omega -\Omega_0\vert\vert_{C^{k+1}})
\end{align}
Plugging \eqref{bound_on_dS}, \eqref{nft_eta_bound_2} and \eqref{nft_eta_bound_3} into \eqref{nft_eta_bound_1} we obtain:
\begin{align}
    \vert\vert \eta\vert\vert_{C^k} \leq \sigma_{k+1}(\vert\vert \Omega -\Omega_0\vert\vert_{C^{k+1}})
\end{align}
For the required bound on $\vert\vert d\eta\vert\vert_{C^k}$ observe that:
\begin{align}\label{d_eta_bounds}
    \vert\vert d\eta\vert\vert_{C^{k}} \leq \vert\vert \tilde{\Omega} - \Omega_0\vert\vert_{C^{k}} + \vert\vert dS\wedge \alpha_0\vert\vert_{C^{k}} + \vert\vert Sd\alpha_0\vert\vert_{C^{k}}
\end{align}
Applying the bounds in \eqref{bounds_of_omega_tilde_minus_omega} and \eqref{bound_on_dS} to \eqref{d_eta_bounds} we obtain
\begin{align}\label{nft_deta_bound_final}
\vert\vert d\eta\vert\vert_{C^k}    \leq \sigma_{k+1}(\vert\vert \Omega - \Omega_0\vert\vert_{C^{k+1}})
\end{align}
We now only need to show that $\vert \vert \mathcal{F}\vert\vert_{C^k} \leq \sigma_{k+1}(\vert\vert\Omega -\Omega_0\vert\vert_{C^{k+1}})$. Towards this end, we make the following definitions:
\begin{align}
    \mathcal{F}_1:= \iota_{K^{-1}_u|_{[R_0]^{\perp}}} \Omega -Id|_{[R_0]^{\perp}} \\\nonumber
    \mathcal{F}_0:= \iota_{K^{-1}_0|_{[R_0]^{\perp}}} \Omega_0 -Id|_{[R_0]^{\perp}}
\end{align}
where $K_0: R_0^{\perp} \rightarrow [R^{\perp}_{0}]^*$ is defined by:
\begin{align}
    X \mapsto \int_{S^1} \Omega_0(d\Phi^t_0 (X),\; d\Phi^t_0 \;\cdot)
\end{align}
Now observe that:
\begin{align}\label{mathcal_F_bound_1}
  \vert\vert\mathcal{F}\vert\vert_{C^k} \leq  \vert\vert\mathcal{F} -\mathcal{F}_1\vert\vert_{C^k}   + \vert\vert\mathcal{F}_1 -\mathcal{F}_0\vert\vert_{C^k} + \vert\vert\mathcal{F}_0\vert\vert_{C^k}
\end{align}
Note that by \eqref{normal_form_bounds_1} and Lemma \ref{Reeb_dist_lemma}, $\vert\vert\mathcal{P}_u -Id\vert\vert_{C^k} \leq \sigma_k(\vert\vert\Omega- \Omega_0\vert\vert_{C^k})$; combining this observation with \eqref{bounds_of_omega_tilde_minus_omega} we see that:
\begin{align}\label{mathcal_F_bound_2}
    \vert\vert\mathcal{F} -\mathcal{F}_1\vert\vert_{C^k} \leq \omega_k(\vert\vert\Omega- \Omega_0\vert\vert_{C^{k}})  \\\nonumber \vert\vert\mathcal{F}_1 -\mathcal{F}_0\vert\vert_{C^k} \leq \omega_k(\vert\vert\Omega- \Omega_0\vert\vert_{C^k})
\end{align}
Plugging \eqref{mathcal_F_bound_2} in \eqref{mathcal_F_bound_1} and observing that $\vert\vert \mathcal{F}_0\vert\vert_{C^k} = 0$ we obtain:
\begin{align}
    \vert\vert\mathcal{F}\vert\vert_{C^k}  \leq \sigma_{k+1}(\vert\vert\Omega- \Omega_0\vert\vert_{C^{k+1}})
\end{align}
\section{Proof of Corollary \ref{variationalprinciple}}\label{proof_of_var_principle}
We begin by fixing an $S^1$-bundle $p:\Sigma \rightarrow M$ associated with the Zoll odd-symplectic form $\Omega_0$ and pick a connection $1$-form $\alpha_0$. Let $\mathcal{U} \subset \Xi^2_{C_0}(\Sigma)$ be the neighborhood of $\Omega_0$ provided by Theorem \ref{normalformthm}. Then, for any odd-symplectic form $\Omega \in \mathcal{U}$, there exists a diffeomorphoism $u:\Sigma \rightarrow \Sigma$ such that $u^*\Omega$ has the form \eqref{normal_form_odd_symp}. Denoting by $\tilde{\Omega}:= u^*\Omega$ and using the same notation as in the statement of Theorem \ref{normalformthm}, we see that \eqref{normal_form_odd_symp} implies the following chain of equalities:
\begin{align}\label{var_princip_1}
    \iota_{R_0} \Tilde{\Omega} = \iota_{R_0} d\eta - dS = \mathcal{F}[dS] - dS
\end{align}
where $R_0$ is the generating vector field for the $S^1$-action on $\Sigma$ that interests us and $S$ is the invariant function whose existence is guaranteed by Theorem \ref{normalformthm}. Denoting by $\bar{S}$ the quotient of $S$ by the $S^1$-action on $\Sigma$, we observe that by \eqref{var_princip_1}, for any $y \in \Sigma$ contained in an $S^1$-fiber over a critical point $x$ of $\Bar{S}$ the following holds:
\begin{align}
    \iota_{R_0} \Tilde{\Omega}|_{y} =0
\end{align}
So, any vector field that spans the characteristic distribution of $\Tilde{\Omega}$ at $y$ is a scalar multiple of $R_0$. As a consequence, the following is true: pick an arbitrary vector field $R$ spanning the characteristic distribution of $\tilde \Omega$ then, for any $y \in \text{Crit}(S)$ the trajectory of $R$ passing through $y$ is the image $u(p_y)$ of the $S^1$-fiber $p_y$ through $y$. From the definition of the quotient map $\bar S$, we can then conclude that any vector field spanning the characteristic distribution of $\tilde \Omega$ must have at least as many closed trajectories as the number of critical points of the function $\bar S$. Moreover, by the Lusternik–Schnirelmann theorem \cite[Theorem 11.2.9]{mcduffsalamon2017introduction}, the following chain of inequalities hold:
\begin{align}
    \text{Crit $\bar{S}$} \geq \text{cl}(M)
\end{align}
where $\text{cl}(M)$ is the cup-length of $M$ computed for the singular cohomology functor with real coefficients.\\
\\
To finish the proof, we only need to show that $\Omega$ is Zoll if and only if $S$ is constant. Toward that end, observe that if $S$ and therefore $\bar{S}$ is constant, then the characteristic foliation of $\Omega$ can be conjugated to that of $\Omega_0$ by the diffeomorphism $u$ and $\Omega$ is therefore Zoll.\\
\\
If, on the other hand, $\Omega$ is Zoll then by \cite[Proposition 7.4 (iii)]{B-K-Odd_Symp} we know that $\inf\mathcal{A}_{\Omega} = \sup\mathcal{A}_{\Omega}$. As a consequence, since function $S$ is defined in \eqref{def_of_S} below to be the following:
\begin{align*}
    S(x) = \mathcal{A}_{\Omega}(\bar{\Gamma}_x) 
\end{align*}
where $\Bar{\Gamma}: S^1 \times
 [0,1] \rightarrow \Sigma$ is just the constant homotopy $\bar{\Gamma}(\cdot, t) = p_x$ $\forall t \in [0,1]$, the inequalities in \eqref{var_principle_eqns} now follow.
\section{Proof of Theorem \ref{Volume_formula}}\label{vol_formula_section}
Recall the setup of Theorem \ref{Volume_formula}: $(\Sigma,\Omega_0)$ is a connected Zoll odd-symplectic manifold of dimension $2n-1$ and the cohomology class $C_0 \in H^2(\Sigma,\mathbb{R})$ represented by $\Omega_0$ satisfies the condition: $C_0^{n-1} =0$. The volume functional (c.f \eqref{Vol_function_def_eqn}) was defined as:
\begin{align}
  Vol_{\Omega_0}(\Omega) = \int^1_0 \int_{\Sigma} \alpha \wedge (\Omega_0+rd\alpha)^{n-1} \;dr
\end{align}
where $\alpha \in \Omega^1(\Sigma)$ is such that $\Omega = \Omega_0+d\alpha$.\\
\\
As always, denote the $S^1$-bundle associated with $\Omega_0$ by $\Sigma \xrightarrow{p} M$ and let $\alpha_0$ be a connection $1$-form of this $S^1$-bundle that is, the $1$-form satisfying: $\alpha_0 (R_0) =1$ and $\mathcal{L}_{R_0}\alpha_0 =0$ where $R_0$ is the vector field generating the relevant $S^1$-action on $\Sigma$. Then, by Theorem \ref{normalformthm} there exists a $C^2$-neighbourhood $\mathcal{U} \subset \Xi^2_{C_0}(\Sigma)$ of $\Omega_0$ such that for any odd-symplectic form $\Omega \in \mathcal{U}$ there exists a diffeomrophism $u$ of $\Sigma$ isotopic to the identity and a smooth function $\Tilde{S}: \Sigma \rightarrow \mathbb{R}$ such that:
\begin{align}
    \Tilde{\Omega}:= u^*\Omega = \Omega_0 + d(\Tilde{S}\alpha_0 + \Tilde{\eta}) 
\end{align}
where the $1$-form $\Tilde{\eta}$ is as in the statement of Theorem \ref{normalformthm}. Since $u$ is isotopic to the identity, it follows from \cite[Proposition 2.8]{B-K-Odd_Symp} that $Vol_{\Omega_0}(\Omega) = Vol_{\Omega_0}(\Tilde{\Omega})$. Therefore,
\begin{align}\label{raw_vol_integral_vol_formula_compuattion}
Vol_{\Omega_0}(\Omega) =    \int^1_0 \int_{\Sigma} (\tilde{S}&\alpha_0 + \tilde{\eta} ) \wedge (\Omega_0 + r d(\tilde{S}\alpha_0 + \tilde{\eta}))^{n-1} dr \\\nonumber = &\int^1_0 \frac{1}{r} \int_{\Sigma}  (S\alpha_0 +\eta) \wedge (\Omega_0 + d(S\alpha_0 +\eta))^{n-1} \;\; dr
\end{align}
where $S:= r \Tilde{S}$ and $\eta:= r \Tilde{\eta}$.\\
\\
We now consider the following two summands in \eqref{raw_vol_integral_vol_formula_compuattion}:
\begin{enumerate}
    \item $S\alpha_0 \wedge (\Omega_0 +  d(S\alpha_0 + \eta))^{n-1}$
    \item $\eta  \wedge (\Omega_0 + d(S\alpha_0 + \eta))^{n-1}$
\end{enumerate}
\textbf{\underline{A formula for $\int_{\Sigma} S\alpha_0 \wedge (\Omega_0 +  d(S\alpha_0 + \eta))^{n-1}$}}\\
\\
\textbf{\underline{Step 1:}}
We now claim that the following expansion follows from the binomial theorem:
 \begin{align}\label{formula_for_omega_to_the_n}
    (\Omega_0 + d(S\alpha_0 + \eta ))^{n-1} = (\Omega_0 + d\eta +Sd\alpha_0)^{n-1} + (n-1)(\Omega_0 + d\eta +Sd\alpha_0)^{n-2} \wedge (dS \wedge \alpha_0)
\end{align}
To see that this is true, we first write out the binomial expansion of $(\Omega_0 + d(S\alpha_0 + \eta ))^{n-1}$:
\begin{align}\label{expansion_of_omega_to_the_n}
     (\Omega_0 + d(S\alpha_0 + \eta ))^{n-1} = \sum^{n-1}_{k=0} \binom{n-1}{k} (\Omega_0 + d\eta +Sd\alpha_0)^{n-1-k} \wedge (dS \wedge \alpha_0)^k
\end{align}
it is now easy to see that for $k >1$ the product $(dS \wedge \alpha_0)^k =0$ by skew-symmetry of the wedge product. So, only the first two summands in \eqref{expansion_of_omega_to_the_n} remain, thereby justifying the validity of \eqref{formula_for_omega_to_the_n}. 
Since $S\alpha_0 \wedge (n-1)(\Omega_0 + d\eta +Sd\alpha_0)^{n-2} \wedge (dS \wedge \alpha_0) =0$ by skew-symmetry of the wedge product. We therefore obtain the following:
\begin{align}\label{alpha_wedge_1}
 S\alpha_0 &\wedge (\Omega_0 +  d(S\alpha_0 + \eta))^{n-1}   = S\alpha_0 \wedge (\Omega_0 +Sd\alpha_0)^{n-1} \\\nonumber + &S\Bigg[ \sum_{k=1}^{n-1} \binom{n-1}{k} \alpha_0 \wedge (\Omega_0 +Sd\alpha_0)^{n-1-k} \wedge d\eta^k \Bigg]
\end{align}
\\
\\
We now focus just on the second summand in \eqref{alpha_wedge_1} and write:
\begin{align}\label{def_beta_k}
    \beta_k := \alpha_0 \wedge (\Omega_0 +Sd\alpha_0)^{n-1-k} \wedge d\eta^k  \;\;\; for \;\; 1\leq k \leq n-1 
\end{align}
Using the binomial theorem to expand $(\Omega_0 +Sd\alpha_0)^{n-1-k}$ in $\beta_k$ we get:
\begin{align}\label{beta_k_def}
     \beta_k =  \sum^{n-1-k}_{j=0} \binom{n-1-k}{j} S^j \alpha_0 \wedge d\alpha^j_0 \wedge (\Omega_0)^{n-1-k-j} \wedge d\eta^k 
\end{align}
By definition, the exterior derivative satisfies:
\begin{align}
    &S^j \cdot d\big(\alpha_0 \wedge d\alpha^j_0 \wedge (\Omega_0)^{n-1-k-j}\wedge \eta \wedge d\eta^{k-1}\big) = \\\nonumber S^j \big[ d\alpha^{j+1}_0  &\wedge (\Omega_0)^{n-1-k-j} \wedge \eta \wedge d\eta^{k-1}\big] - S^j \alpha_0 \wedge d\alpha^j_0 \wedge (\Omega_0)^{n-1-k-j} \wedge d\eta^k 
\end{align}
The term in \eqref{beta_k_def}is:
\begin{align}\label{alpha_wedge_2}
 &\sum^{n-1-k}_{j=0}   \binom{n-1-k}{j}  S^j \big[ d\alpha^{j+1}_0  \wedge (\Omega_0)^{n-1-k-j} \wedge \eta \wedge d\eta^{k-1}\big] \\\nonumber -    \sum^{n-1-k}_{j=0} & \binom{n-1-k}{j}  S^j \cdot d\big(\alpha_0 \wedge d\alpha^j_0 \wedge (\Omega_0)^{n-1-k-j}\wedge \eta \wedge d\eta^{k-1}\big)  
\end{align}
Plugging \eqref{alpha_wedge_2} into \eqref{alpha_wedge_1} we obtain:
\begin{align}\label{alpha_wedge_3}
  &S\alpha_0 \wedge (\Omega_0 +  d(S\alpha_0 + \eta))^{n-1} =   S\alpha_0 \wedge (\Omega_0 +Sd\alpha_0)^{n-1} \\\nonumber + S \Bigg[ \sum_{k=1}^{n-1} \binom{n-1}{k}  &(\Omega_0 + Sd\alpha_0)^{n-1-k} \wedge \eta \wedge d\eta^{k-1} \wedge d\alpha_0  - \sum^{n-1}_{k=1}  \sum^{n-1-k}_{j=0} S^j \cdot \tilde{p}^k_j \alpha_0 \wedge (\Omega_0)^{n-1} \Bigg]
\end{align}
where for each $k,j$ satisfying $1\leq k \leq n$ and $0 \leq j \leq n-k$, the functions $\Tilde{p}^k_j: \Sigma \rightarrow \mathbb{R}$ are defined by:
\begin{align}\label{def_of_p_tilde}
    \tilde{p}^k_j \alpha_0 \wedge (\Omega_0)^{n-1} = \binom{n-1}{k} \binom{n-1-k}{j} d\big(\alpha_0 \wedge d\alpha^j_0 \wedge (\Omega_0)^{n-1-k-j}\wedge \eta \wedge d\eta^{k-1}\big)
\end{align}
\textbf{\underline{Step 2:}} We now focus on the first part of the second summand in \eqref{alpha_wedge_3} and write it as $\sigma_{k,j}$:
\begin{align}
    \sigma_{k,j} =  \sum^{n-1-k}_{j=0} \binom{n-1-k}{j} S^{j+1} \big[ d\alpha^{j+1}_0  \wedge (\Omega_0)^{n-1-k-j} \wedge \eta \wedge d\eta^{k-1}\big] 
\end{align}
Since $\alpha_0$ is the connection $1$-form of the $S^1$-bundle $\Sigma \xrightarrow{p} M$ associated with $\Omega_0$ and $R_0$ is the vector field generating the $S^1$-action, it is clear that the operator $[\alpha_0 \wedge \iota_{R_0}] : \Omega^{2n-1}(\Sigma) \rightarrow   \Omega^{2n-1}(\Sigma)$ acts as the identity. Therefore, on applying the operator $\alpha_0 \wedge \iota_{R_0}$ to $\sigma_{k,j}$ we obtain:
\begin{align}
 \sigma_{k,j} =  \sum^{n-1-k}_{j=0} \binom{n-1-k}{j} S^{j+1} [\alpha_0 \wedge \iota_{R_0}]  ( d\alpha^{j+1}_0 \wedge \Omega^{n-1-k-j}_0 \wedge \eta \wedge & d\eta^{k-1}) \\\nonumber = \sum^{n-1-k}_{j=0} \binom{n-1-k}{j}  S^{j+1}[\alpha_0 \wedge  d\alpha^{j+1}_0 \wedge \Omega^{n-1-k-j}_0 \wedge \eta \wedge (k-1) \mathcal{F}(dS&) \wedge d\eta^{k-2}]  
\end{align}
where the last equality follows from the fact that $\iota_{R_0} d\alpha_0 = \iota_{R_0} \eta = \iota_{R_0} \Omega_0 =0$ and $\iota_{R_0} d\eta = \mathcal{F}(dS)$ (c.f Theorem \ref{normalformthm})
So, we can write $\sigma_{k,j}$ as:
\begin{align}\label{sigma_kj_1}
\sigma_{k,j} = \sum^{n-1-k}_{j=0} \binom{n-1-k}{j} (k-1) S^{j+1} \mathcal{F}(dS) \wedge [\alpha_0 \wedge  d\alpha^{j+1}_0 \wedge \Omega^{n-1-k-j}_0 \wedge \eta \wedge d\eta^{k-2}]
\end{align}
Before going further, we recall the following observation extracted from \cite{Abbondandolo-Benedetti}: the wedge product induces an invertible map $L: \Omega^{2n-2}(\Sigma) \times \Omega^1(\Sigma) \rightarrow \Omega^{2n-1}(\Sigma)$ in the following way:
        \begin{align}
            (\omega_1,\omega_2) \mapsto \omega_1 \wedge \omega_2 \text{  }\;\;\forall\;\;  (\omega_1,\omega_2) \in \Omega^{2n-2}(\Sigma) \times \Omega^1(\Sigma)
        \end{align}
        So, the endomorphism $\mathcal{F}:\Omega^1(\Sigma) \rightarrow \Omega^1(\Sigma)$ in Theorem \ref{normalformthm} has a dual $\hat{\mathcal{F}}: \Omega^{2n-1}(\Sigma) \rightarrow \Omega^{2n-1}(\Sigma)$ defined in the following way:
        \begin{align}\label{dual_of_endo_F}
            \mathcal{F}(\omega_1)\wedge \omega_2 = \omega_1 \wedge \hat{\mathcal{F}}(\omega_2) \text{ }\;\;\forall  (\omega_1,\omega_2) \in \Omega^{2n-2}(\Sigma) \times \Omega^1(\Sigma) 
        \end{align}
Combining this observation with \eqref{sigma_kj_1} we obtain:      \begin{align}
 \sigma_{k,j} =  \sum^{n-1-k}_{j=0} \binom{n-1-k}{j}(k-1)\frac{1}{j+2} dS^{j+2} \wedge \hat{\mathcal{F}}[\alpha_0 \wedge  d\alpha^j_0 \wedge \Omega^{n-1-k-j}_0 \wedge \eta \wedge d\eta^{n-1-k-2}]
\end{align}
where $\hat{\mathcal{F}}$ is the dual to $\mathcal{F}$ defined in \eqref{dual_of_endo_F}.\\
\\
After applying Stokes' theorem, for each fixed $k$ we can now write the integral $ \int_{\Sigma} \sigma_{k,j}$ as:
\begin{align}\label{sigma_k_final_form}
 \int_{\Sigma} \sigma_{k,j} =  \sum^{n-1-k}_{j=0} \binom{n-1-k}{j}(k-1) \frac{1}{j+2} \int_{\Sigma} S^{j+2} d\hat{\mathcal{F}}(\alpha_0 \wedge d\alpha^{j+1}_0 \wedge \Omega^{n-1-k-j}_0 \wedge \eta \wedge d\eta^{k-2})
\end{align}
\textbf{\underline{Step 3:}}\\
\\
Finally, because $\sigma_{k,j}$ is a top-rank form on $\Sigma$, for each fixed $k,j$
there exist smooth functions $p^k_j: \Sigma \rightarrow \mathbb{R}$ given by:
\begin{align}\label{sigma_k_polynomial}
    p^k_j \cdot \alpha_0 \wedge (\Omega_0)^{n-1} = \binom{n-1}{k} \binom{n-1-k}{j}\frac{1}{j+2}(k-1) 
 d\hat{\mathcal{F}}(\alpha_0 \wedge d\alpha^{j+1}_0 \wedge \Omega^{n-1-k-j}_0 \wedge \eta \wedge d\eta^{k-2}) 
\end{align}
\\
Plugging \eqref{sigma_k_polynomial} into \eqref{alpha_wedge_3} we get:
\begin{align}\label{part_2_final_form}
  &\int_{\Sigma}S\alpha_0 \wedge (\Omega_0 + Sd\alpha_0 + d\eta)^{n-1} \\\nonumber  = \int_{\Sigma} S\alpha_0 \wedge (\Omega_0 +Sd\alpha_0)^{n-1}  + &\int_{\Sigma} \Big[\sum^{n-1}_{k=1} \sum^{n-1-k}_{j=0}  S^{j+2} \cdot p^k_j + \sum^{n-1}_{k=1} \sum^{n-1-k}_{j=0} S^{j+1} \cdot \tilde{p}^k_j\Big] \alpha_0 \wedge (\Omega_0)^{n-1} \\\nonumber = \int_{\Sigma} S\alpha_0 &\wedge (\Omega_0 +Sd\alpha_0)^{n-1}  + \int_{\Sigma} \tilde{P}(x,S) \; \alpha_0 \wedge (\Omega_0)^{n-1}
\end{align}
where $\tilde{P}(x,s):= \sum_{k=0}^{n-1} \sum_{j=0}^{n-1-k} \tilde{P}_{k,j}(x,s)$ and the functions $\tilde{P}_{k,j}(x,s)$ are given by
\begin{align}\label{def_of_Tilde_P}
   \tilde{P}_{k,j}(x,s):= \sum^{n-1}_{k=1} \sum^{n-1-k}_{j=0} s^{j+2} \cdot p^k_j + \sum^{n-1}_{k=1} \sum^{n-1-k}_{j=0} s^{j+1} \cdot \tilde{p}^k_j
\end{align}
\textbf{\underline{Part 3: A formula for $\int_{\Sigma} \eta  \wedge (\Omega_0 + d(S\alpha_0 + \eta ))^{n-1}$}}\\
\\
\textbf{\underline{Step 1:}}\\
First note that by \eqref{formula_for_omega_to_the_n}, we can write $\eta  \wedge (\Omega_0 + d(S\alpha_0 + \eta))^{n-1}$ as:
\begin{align}\label{eta_wedge_1}
\eta \wedge (\Omega_0 +Sd\alpha_0 + d\eta)^{n-1} + n-1 [\eta \wedge (\Omega_0 +Sd\alpha_0 + d\eta)^{n-2} \wedge (dS \wedge \alpha_0)]
\end{align}
Looking just at the second summand above, see that we can write it as:
\begin{align}\label{part3_formula_1}
    &(n-1)dS \wedge \alpha_0 \wedge \eta  \wedge (\Omega_0 +Sd\alpha_0 + d\eta)^{n-2} \nonumber\\
= \sum^{n-2}_{k=0} \sum^{n-k-2}_{j=0} &\binom{n-2}{k}  \binom{n-k-2}{j} \frac{n-1}{j+1} dS^{j+1} \wedge \alpha_0 \wedge \eta  \wedge (\Omega_0)^{n-1-k-j} \wedge (d\alpha_0)^j \wedge  (d\eta)^{k}
\end{align}
After applying Stokes' theorem, the integral of \eqref{part3_formula_1} above can be written as:
\begin{align}\label{part_3_step_1}
     - \sum^{n-2}_{k=0} \sum^{n-1-k}_{j=0} \binom{n-2}{k}  \binom{n-1-k}{j} \frac{n-1}{j+1} \int_{\Sigma} S^{j+1} d(\alpha_0 \wedge \eta  \wedge (\Omega_0)^{n-1-k-j} \wedge (d\alpha_0)^j \wedge  (d\eta)^{k})
\end{align}
For each fixed $k$ and $j$ we can write the top-rank form in \eqref{part_3_step_1} as:
\begin{align}\label{part_3_final_1}
\tilde{Q}^{k,j}_2 \cdot \alpha_0 \wedge (\Omega_0)^{n-1} =  \binom{n-1}{k}  \binom{n-1-k}{j} \frac{-(n-1)}{j+1} d(\alpha_0 \wedge \eta  \wedge (\Omega_0)^{n-1-k-j} \wedge (d\alpha_0)^j \wedge  (d\eta)^{k})
\end{align}
where $\Tilde{Q}^{k,j}_2: \Sigma \rightarrow \mathbb{R}$ are  smooth functions for each $k$ and $j$.\\
\textbf{\underline{Step 2:}}\\
Applying the operator $[\alpha_0 \wedge \iota_{R_0}]$ to the first summand $\eta  \wedge (\Omega_0 +Sd\alpha_0 + d\eta)^{n-1}$ in \eqref{eta_wedge_1} we get:
\begin{align}
\sum_{k=1}^{n-1} \sum_{j=0}^{n-1-k} \binom{n-1}{k}   \binom{n-1-k}{j} k S^j \mathcal{F}[dS] \wedge \alpha_0 \wedge \eta \wedge (\Omega_0)^{n-1-k-j} \wedge (d\alpha_0)^{j} \wedge d\eta^{k-1}
\end{align}
On applying Stokes' theorem, the integral of the above differential form can be written as the integral of the following differential form:
\begin{align}\label{part_3_step_2_eq_2}
   \int_{\Sigma} \sum_{k=1}^{n-1} \sum_{j=0}^{n-1-k} \binom{n-1}{k}  \binom{n-1-k}{j} \frac{-k}{j+1} S^{j+1} \cdot d\hat{\mathcal{F}}[\alpha_0 \wedge \eta \wedge (\Omega_0)^{n-1-k-j} \wedge (d\alpha_0)^{j} \wedge d\eta^{k-1}]
\end{align}
For each fixed $k,j$ we define smooth functions $\tilde{Q}^{k,j}_1: \Sigma \rightarrow \mathbb{R}$ satisfying:
\begin{align}\label{part_3_final_2}
 \tilde{Q}^{k,j}_1 \cdot \alpha_0 \wedge (\Omega_0)^{n-1} = \binom{n-1}{k}  \binom{n-1-k}{j} \frac{-k}{j+1} \cdot d\hat{\mathcal{F}}[\alpha_0 \wedge \eta \wedge (\Omega_0)^{n-1-k-j} \wedge (d\alpha_0)^{j} \wedge d\eta^{k-1}] \\\nonumber  
\end{align}
$\hat{\mathcal{F}}$ is again the dual to $\mathcal{F}$ defined in \eqref{dual_of_endo_F}.\\
\\
Plugging \eqref{part_3_final_1} and \eqref{part_3_final_2} into \eqref{eta_wedge_1} we see that the integral $\int_{\Sigma} \eta \wedge (\Omega_0 + d(S\alpha_0+\eta))^{n-1}$ can be written as:
\begin{align}\label{part_3_final_final}
   \int_{\Sigma}\eta \wedge (\Omega_0 + d(S\alpha_0+\eta))^{n-1} = \int_{\Sigma} Q_1(x,s) \; \alpha_0 \wedge \Omega_0^{n-1} + \int_{\Sigma} Q_2(x,s) \; \alpha_0 \wedge \Omega_0^{n-1} 
 \end{align}  
 where 
\begin{align}\label{def_of_Q_1_and_Q_2}
 Q_1: &\Sigma \times \mathbb{R} \rightarrow \mathbb{R} \\\nonumber 
 &(x,s) \mapsto \sum_{k=1}^{n-1} \sum_{j=0}^{n-1-k} s^{j+1} \cdot \tilde{Q}^{k,j}_1(x) \\\nonumber 
 Q_2: &\Sigma \times \mathbb{R} \rightarrow \mathbb{R} \\\nonumber 
 &(x,s) \mapsto \sum_{k=0}^{n-1}   \sum^{n-1-k}_{j=0} s^{j+1} \cdot \tilde{Q}^{k,j}_2(x) 
\end{align}
\textbf{\underline{Part 4: Putting the previous steps together}}\\
\\
By \eqref{raw_vol_integral_vol_formula_compuattion} $Vol_{\Omega_0}(\Omega)$ can be written as:
\begin{align}
    Vol_{\Omega_0}(\Omega) =    \int^1_0 \frac{1}{r} \int_{\Sigma} (S\alpha_0 + \eta) \wedge (\Omega_0 + r d(S\alpha_0 + \eta ))^{n-1} dr 
\end{align}
By \eqref{def_of_Tilde_P}, \eqref{def_of_Q_1_and_Q_2} we see that:
\begin{align}
    Vol_{\Omega_0}(\Omega) = \int^1_0 \frac{1}{r}\int_{\Sigma}  S\alpha_0 &\wedge (\Omega_0 +Sd\alpha_0)^{n-1} dr + \int^1_0 \frac{1}{r} \int_{\Sigma} D(x,S) \; \alpha_0 \wedge \Omega^{n-1}_0 dr
\end{align}
where:
\begin{align}\label{def_of_D}
    D:= \Tilde{P}+Q_1+Q_2
\end{align}
We now note the following:
\begin{enumerate}
    \item For each $k$ and $j$ the following is true just from the definition of the functions $\Tilde{Q}^{k,j}_1$ (c.f \eqref{def_of_Q_1_and_Q_2}) :
    \begin{align}
   &Q_1 = \sum_{k=1}^{n-1} \sum_{j=0}^{n-1-k} S^{j+1} \cdot  \tilde{Q}^{k,j}_1 \cdot \alpha_0 \wedge (\Omega_0)^{n-1}  \\\nonumber =
 \sum_{k=0}^{n-1}   \sum^{n-1-k}_{j=0} \binom{n-1}{k} &\binom{n-1-k}{j} (j+1) S^{j+1} d\hat{\mathcal{F}}[\alpha_0 \wedge \eta \wedge (\Omega_0)^{n-1-k-j} \wedge (d\alpha_0)^{j} \wedge d\eta^{k-1}]
    \end{align}
    It follows that the functions $\tilde{Q}_1$ integrates to zero over $\Sigma$ when $S$ is constant.\\
    
    \item Similarly for the functions $ \Tilde{Q}^{k,j}_2$ we see that for each $k$ and $j$ the following equality follows from their definition (c.f \eqref{def_of_Q_1_and_Q_2}) :
    \begin{align}
    &Q_2 = \sum_{k=0}^{n-1}   \sum^{n-1-k}_{j=0}   S^{j+1} \cdot \tilde{Q}^{k,j}_2 \cdot \alpha_0 \wedge (\Omega_0)^{n-1}  \\\nonumber =
 \sum_{k=0}^{n-1}   \sum^{n-1-k}_{j=0} \binom{n-1}{k} &\binom{n-1-k}{j} S^{j+1} \cdot d( \alpha_0 \wedge \eta  \wedge (\Omega_0)^{n-1-k-j} \wedge (d\alpha_0)^j \wedge  (d\eta)^{k})
    \end{align}
   The function $\Tilde{Q}_2$ integrates to $0$ over $\Sigma$ when $S$ is constant because the left-hand side of the above equation is exact.
    \item for each $k,j$ the functions $\Tilde{P}_{k,j}(x,S)$ integrates to zero over $\Sigma$ when $S$ is constant because they are defined by:
\begin{align*}
    \tilde{P}_{k,j}(x,S):= \sum^{n-1}_{k=1} \sum^{n-1-k}_{j=0} S^{j+2} \cdot p^k_j + \sum^{n-1}_{k=1} \sum^{n-1-k}_{j=0} S^{j+1} \cdot \tilde{p}^k_j
\end{align*}
    Where for each fixed $k$ and $j$ the functions $p^k_j(x)$ satisfy:
    \begin{align}
    p^k_j \cdot \alpha_0 \wedge (\Omega_0)^{n-1} = d\hat{\mathcal{F}}(\alpha_0 \wedge d\alpha^{j+1}_0 \wedge \Omega^{n-1-k-j}_0 \wedge \eta \wedge d\eta^{k-2})
\end{align}
    And the functions $\tilde{p}^k_j$ satisfy:
    \begin{align}
    \tilde{p}^k_j \cdot \alpha_0 \wedge (\Omega_0)^{n-1} = d\big(\alpha_0 \wedge d\alpha^j_0 \wedge (\Omega_0)^{n-1-k}\wedge \eta \wedge d\eta^{k-1}\big)
\end{align}
\item It also follows from the definition of $\tilde{P}$, $Q_1$ and $Q_2$ that $D(x,0) = 0$ $\forall$ $x \in \Sigma$.
\end{enumerate}
To finish the proof of Theorem \ref{Volume_formula} we need only to establish \eqref{volume_formula_bounds}. This is the context of the following result.
\begin{proposition}
Let $D: \Sigma \times\mathbb{R} \rightarrow \mathbb{R}$ be the function defined in \eqref{def_of_D} then, given an $\epsilon>0$ there exists a $\delta_0 >0$ such that the following is true when $s \in \mathbb{R}$ is close to $0$:
\begin{align}
 \vert\vert \Omega_0 - \Omega\vert\vert_{C^2} < \delta_0 \implies   \Big\vert\Big\vert\frac{\partial}{\partial s}D\Big\vert\Big\vert_{C^0} < \epsilon
\end{align}
\end{proposition}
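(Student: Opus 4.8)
The plan is to treat $\partial_s D$ as a polynomial in $s$ whose coefficients are smooth functions on $\Sigma$, and to show that every one of these coefficients carries at least one factor drawn from $\{\eta, d\eta, \mathcal{F}\}$, so that the normal form bounds \eqref{bounds_odd_symp} force them all to be $C^0$-small once $\Omega$ is $C^2$-close to $\Omega_0$. First I would record that, by \eqref{def_of_D}, \eqref{def_of_Tilde_P} and \eqref{def_of_Q_1_and_Q_2}, for each fixed $x \in \Sigma$ the map $s \mapsto D(x,s)$ is a polynomial whose lowest-order term is linear, since every summand occurs with a power $s^{j+1}$ or $s^{j+2}$ with $j \geq 0$; this is consistent with $D(x,0)=0$. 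Differentiating gives $\partial_s D(x,s) = \sum_{\ell=0}^N c_\ell(x)\, s^\ell$ for some $N = N(n)$ and smooth coefficients $c_\ell \in C^\infty(\Sigma)$ independent of $s$, so that for $\delta \leq 1$ one has the crude bound $||\partial_s D||_{C^0(\Sigma \times [-\delta,\delta])} \leq \sum_{\ell=0}^N ||c_\ell||_{C^0(\Sigma)}$.

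The heart of the argument is a structural observation. Each of the functions $p^k_j$, $\Tilde{p}^k_j$, $\Tilde{Q}^{k,j}_1$ and $\Tilde{Q}^{k,j}_2$ is defined, via a structure equation of the form $(\,\cdot\,)\,\alpha_0 \wedge \Omega_0^n = (\text{top-degree form})$ as in \eqref{def_of_p_tilde}, \eqref{sigma_k_polynomial}, \eqref{part_3_final_1} and \eqref{part_3_final_2}, through a top-degree form that explicitly contains a factor of $\eta$ (and, for $p^k_j$ and $\Tilde{Q}^{k,j}_1$, an application of the dual endomorphism $\hat{\mathcal{F}}$ of \eqref{dual_of_endo_F}). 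Taking the exterior derivative cannot destroy this feature: when $d$ falls on $\eta$ it yields $d\eta$, and when it falls on $\alpha_0$, $d\alpha_0$ or $\Omega_0$ it yields $d\alpha_0$, $0$ or $0$ while leaving the explicit factor of $\eta$ intact. Since collecting powers of $s$ in $\partial_s D$ merely rescales the powers and leaves the coefficient forms alone, each product $c_\ell \cdot \alpha_0 \wedge \Omega_0^n$ is a finite sum of wedge products of the fixed forms $\alpha_0, d\alpha_0, \Omega_0$ with at least one factor among $\eta$, $d\eta$ and $\hat{\mathcal{F}}(\cdot)$.

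Next I would turn this into quantitative bounds. Dividing each structure equation by the nowhere-vanishing volume form $\alpha_0 \wedge \Omega_0^n$, whose pointwise norm is bounded below by compactness of $\Sigma$, identifies $c_\ell$ with the scalar coefficient of a top form and yields $||c_\ell||_{C^0} \leq C_\ell \max\{||\eta||_{C^1}, ||d\eta||_{C^1}, ||\mathcal{F}||_{C^1}\}$, where $C_\ell$ depends only on $n$ and on the fixed data $\alpha_0, \Omega_0$; the first-order norms appear because the definitions involve $d$ and $d\hat{\mathcal{F}}$. Inserting the normal form estimates \eqref{bounds_odd_symp} bounds the right-hand side by $C_\ell\, \omega_2(||\Omega - \Omega_0||_{C^2})$, so summing over $\ell$ gives $||\partial_s D||_{C^0(\Sigma \times [-\delta,\delta])} \leq C(n,\alpha_0,\Omega_0)\, \omega_2(||\Omega - \Omega_0||_{C^2})$ for every $\delta \leq 1$. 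As $\omega_2$ is continuous with $\omega_2(0)=0$, choosing $\delta_0>0$ small enough that this quantity falls below $\epsilon$ (and taking any $\delta \leq 1$) finishes the argument.

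I expect the only genuinely delicate step to be the structural bookkeeping of the second paragraph: one must verify that after all the exterior differentiations and the passage to $\hat{\mathcal{F}}$ in \eqref{sigma_k_polynomial}--\eqref{part_3_final_2} no term can shed its surviving factor of $\eta$, $d\eta$ or $\mathcal{F}$, since it is precisely this factor that makes $c_\ell$ small. A secondary point, and the reason $C^2$-closeness (rather than $C^1$-closeness) of $\Omega$ is the correct hypothesis, is that the $d$ and $d\hat{\mathcal{F}}$ operators push the $C^1$-norms of $\eta$ and $\mathcal{F}$ into the estimate, and by \eqref{bounds_odd_symp} these are controlled exactly by $||\Omega - \Omega_0||_{C^2}$.
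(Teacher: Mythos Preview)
Your proposal is correct and follows essentially the same strategy as the paper's own proof: both arguments reduce to showing that every coefficient function $p^k_j$, $\tilde p^k_j$, $\tilde Q^{k,j}_1$, $\tilde Q^{k,j}_2$ carries at least one factor of $\eta$, $d\eta$ or $\hat{\mathcal F}$, and then use the product structure together with the normal form bounds \eqref{bounds_odd_symp} at level $k=1$ (hence $C^2$-closeness of $\Omega$) to make these coefficients $C^0$-small. The only difference is presentational: the paper treats the four families of coefficients in four separate steps, writing out explicit bounds such as $\|p^k_j\|_{C^0}\lesssim \|\hat{\mathcal F}\|_{C^1}\|\gamma_{k,j}\|_{C^0}+\|\hat{\mathcal F}\|_{C^0}\|\gamma_{k,j}\|_{C^1}$ and $\|\tilde p^k_j\|_{C^0}\lesssim \|\hat\gamma_{k,j}\|_{C^1}$, whereas you package the same computation into a single structural observation and a uniform estimate $\|c_\ell\|_{C^0}\leq C_\ell\,\omega_2(\|\Omega-\Omega_0\|_{C^2})$.
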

\begin{proof}
\textbf{\underline{Step 1: The bounds on $p^k_j$}}\\
We need to show that given an $\epsilon>0$, there exist real constants $\delta, c>0$ such that $\vert\vert p^k_j\vert\vert_{C^0} < \epsilon$ if the following hold:
\begin{align}\label{est_bounds_basis_1}
  \max \bigg\{\vert\vert\eta\vert\vert_{C^0}, \vert\vert d\eta\vert\vert_{C^0}, \vert\vert\mathcal{F}\vert\vert_{C^0} \bigg\} < \delta \\\nonumber
\max \bigg\{\vert\vert\eta\vert\vert_{C^1}, \vert\vert d\eta\vert\vert_{C^1}, \vert\vert\mathcal{F}\vert\vert_{C^1} \bigg\} < c
  \end{align}
Since $\hat{\mathcal{F}}$ is the dual of $\mathcal{F}$, the bounds in \eqref{est_bounds_basis_1} imply that there exists a real constant $b_0 >0$ such that:
\begin{align}\label{F_dual_bounds}
    \vert\vert\hat{\mathcal{F}}\vert\vert_{C^0} < b_0 \delta \; \; \& \;\; \vert\vert\hat{\mathcal{F}}\vert\vert_{C^1} <  b_0 c
\end{align}
Now, recall that for each $1 \leq k \leq n-2$ and $0\leq j \leq n-1-k$ the functions $p^k_j$ satisfy:
\begin{align*}
     p^k_j \cdot \alpha_0 \wedge (\Omega_0)^{n-1} = \binom{n-2}{k} \binom{n-1-k}{j}  \Bigg[ d\hat{\mathcal{F}}(\alpha_0 \wedge d\alpha^{j+1}_0 \wedge \Omega^{n-1-k-j}_0 \wedge \eta \wedge d\eta^{k-2})\Bigg]
\end{align*}
Consider now the following differential form:
\begin{align}\label{def_of_gamma_k_j}
    \gamma_{k,j}:= \alpha_0 \wedge \eta \wedge d\alpha^j_0 \wedge \Omega^{n-1-k-j}_0 \wedge d\eta^{k-2}
\end{align}
The Libnitz formula along with \eqref{est_bounds_basis_1} implies that there exists a real number $b_1>0$ such that:
\begin{align}\label{gamma_c_0_bound}
    \vert\vert\gamma_{k,j}\vert\vert_{C^0} \leq b_1 \delta^{k-1} \; \;  \& \;\; \vert\vert\gamma_{k,j}\vert\vert_{C^1} \leq b_1 c^{k-1}
\end{align} 
Using the Leibnitz formula, \eqref{F_dual_bounds} and \eqref{gamma_c_0_bound} we see that the following is true:
\begin{align}\label{p_kj_bound_1}
    &\vert\vert d\hat{\mathcal{F}}[\alpha_0 \wedge \eta \wedge d\alpha^j_0 \wedge \Omega^{n-1-k-j}_0 \wedge d\eta^{k-2}]\vert\vert_{C^0} \\\nonumber \leq
     \vert\vert\hat{\mathcal{F}}&\vert\vert_{C^1} \cdot \vert\vert\gamma_{k,j}\vert\vert_{C^0} + \vert\vert\hat{\mathcal{F}}\vert\vert_{C^0} \vert\vert\gamma_{k,j}\vert\vert_{C^1} \leq b_0b_1 \delta^k + b_0b_1 c^k
\end{align} 
It is clear that by choosing $\delta, c$ in \eqref{est_bounds_basis_1} small enough we can make $\vert\vert p^k_{j}\vert\vert_{C^0} < \epsilon$ for any given $\epsilon >0$.\\
\\
\textbf{\underline{Step 2: The bounds on $\Tilde{p}^{k}_{j}$}}
\\
Recall from \eqref{def_of_p_tilde} that the functions $\Tilde{p}^k_j$ are defined by:
\begin{align*}
     \tilde{p}^k_j \alpha_0 \wedge (\Omega_0)^{n-1} = d\big(\alpha_0 \wedge d\alpha^j_0 \wedge (\Omega_0)^{n-1-k-j}\wedge \eta \wedge d\eta^{k-1}\big)
\end{align*}
In analogy with the previous step we make the following definition:
\begin{align}
    \hat{\gamma}_{k,j}:= \alpha_0 \wedge d\alpha^j_0 \wedge (\Omega_0)^{n-1-k-j}\wedge \eta \wedge d\eta^{k-1}
\end{align}
Now, it is clear from \eqref{est_bounds_basis_1} that there exists a real $d_1 >0$ such that:
\begin{align}
    \vert\vert\hat{\gamma}_{k,j}\vert\vert_{C^1} < d_1c^k
\end{align}
So, we now conclude that the following holds for each $1 \leq k \leq n-2$ and $0\leq j \leq n-1-k$:
\begin{align}
\vert\vert \tilde{p}^k_j\vert\vert_{C^0} \leq  \vert\vert\hat{\gamma}_{k,j}\vert\vert_{C^1} <  d_1c^{k}
\end{align}
\\
\textbf{\underline{Step 3: The bounds on $\tilde{Q}^{k,j}_1$}}
\\
First recall that the functions $\tilde{Q}^{k,j}_1$ are defined by the equations:
\begin{align}
         \tilde{Q}^{k,j}_1 \cdot \alpha_0 \wedge (\Omega_0)^{n-1} = d\hat{\mathcal{F}}[\alpha_0 \wedge \eta \wedge (\Omega_0)^{n-1-k-j} \wedge (d\alpha_0)^{j} \wedge d\eta^{k-1}]
\end{align}
So, if we write $\kappa_{k,j}:= \alpha_0 \wedge \eta \wedge (\Omega_0)^{n-1-k-j} \wedge (d\alpha_0)^{j} \wedge d\eta^{k-1}$. Then, by \eqref{est_bounds_basis_1} there exists a real constant $d_2>0$ such that the following hold:
\begin{align}
    \vert\vert\kappa_{k,j}\vert\vert_{C^0} \leq d_2 \delta^{k} \; \;  \& \;\; \vert\vert\kappa_{k,j}\vert\vert_{C^1} \leq d_2 c^{k}
\end{align}
This allows us to conclude as in step 1 above that there exists a $d_3>0$ such that:
\begin{align}
    \vert\vert\tilde{Q}^{k,j}_1\vert\vert_{C^0} \leq \vert\vert\hat{\mathcal{F}}\vert\vert_{C^0}\cdot \vert\vert\kappa_{k,j}\vert\vert_{C^1} + \vert\vert\hat{\mathcal{F}}\vert\vert_{C^1}\cdot \vert\vert\kappa_{k,j}\vert\vert_{C^0} \leq d_3(\delta^{k+1} + c^{k+1})
\end{align}
\\
\textbf{\underline{Step 4: The bounds on $\Tilde{Q}^{k,j}_2$}}
\\
\\
First, recall that for each $k$ and $j$ the functions $\Tilde{Q}^{k,j}_2$ are defined by the equations:
\begin{align}
        \tilde{Q}^{k,j}_2 \cdot \alpha_0 \wedge (\Omega_0)^{n-1} = d( \alpha_0 \wedge \eta  \wedge (\Omega_0)^{n-1-k-j} \wedge (d\alpha_0)^j \wedge  (d\eta)^{k})
    \end{align}

\begin{align}
    \vert\vert\tilde{Q}^{k,j}_2\vert\vert_{C^0} \leq a_1c^{k+1}
\end{align}
So, by choosing $c$ in \eqref{est_bounds_basis_1} small enough we can guarantee that for a given $\epsilon>0$ and for each $k$ and $j$ the functions $\Tilde{Q}^{k,j}_2$ satisfy the bounds:
\begin{align}
    \vert\vert\tilde{Q}^{k,j}_2\vert\vert_{C^0} < \epsilon
\end{align}
Since $D(x,s)$ is a polynomial in $s$ with coefficients given by $\tilde{P}$, $Q_1$ and $Q_2$ we see that if $\delta$ and $c$ in \eqref{est_bounds_basis_1} are small enough then $\vert\vert\frac{\partial D}{\partial s}\vert\vert_{C^0} < \epsilon$ as claimed. 
\end{proof}
\section{Hamiltonian dynamics on hypersurfaces}\label{dynamics_on_hyp_sec_main}
\subsection{Proof of Theorem \ref{systolic_ineq_for_hyp_sur}}\label{proof_of_hyp_sys_ineq_section}
Given a symplectic manifold $(W,\omega)$, the $2n$-form $\omega^{2n}$ defines an orientation on $W$, which in turn induces a natural orientation on an embedded hypersurface $\Sigma \hookrightarrow W$. The characteristic foliation of the odd-symplectic form $\omega\vert_{\Sigma}$ will be called the characteristic foliation of $\Sigma$. In this setting, we suppose that the symplectic manifold $(W,\omega)$ admits a Zoll hypersurface $\Sigma_0 \hookrightarrow W$, to this Zoll hypersurface, one can associate a smooth function $H \in C^{\infty}(A)$ where $A \subset W$ is a neighborhood of $\Sigma_0$ such that the following holds:
\begin{align}
    H^{-1}(0)= \Sigma_0
\end{align}
Given another embedded hypersurface $\tilde \Sigma \hookrightarrow W$, there similarly exists a smooth function $\tilde H \in C^{\infty}(\tilde A)$ where $\tilde A \subset W$ is a neighborhood of $\tilde \Sigma$. Recall that in the introduction, we defined $\tilde \Sigma$ as being $C^2$-close to $\Sigma$ if the two functions $H$ and $\tilde H$ are $C^2$-close; this definition is motivated by the observation \eqref{eta_taylor} below.\\
\\
We now fix a connected symplectic manifold $(W,\omega)$ of dimension $2n$ for $n\geq 2$ admitting a Zoll hypersurface $\Sigma_0 \hookrightarrow W$ and pick another closed, connected and embedded hypersurface $\Sigma \hookrightarrow W$ that is $C^2$-close to $\Sigma_0$. In this setting, there exists a smooth function $H_0 \in C^{\infty}(A)$ defined in a neighborhood $A \subset W$ of the union $\Sigma \cup \Sigma_0$ such that for some $a \in \mathbb{R}$ the following hold:
\begin{align}
    H^{-1}_0(0) = \Sigma_0 \qquad H^{-1}(a) = \Sigma
\end{align}
Since $\Sigma$ is assumed to be $C^2$-close to $\Sigma_0$, we can assume without any loss of generality that $H_0$ was chosen so that $a \in (-\epsilon_0,\epsilon_0)$ where $\epsilon_0>0$ was chosen so that the interval $(-\epsilon_0,\epsilon_0)$ contains no critical values of $H_0$. In this setting, we make the following definition:
\begin{align}\label{D(Sigma)_region_def}
  \tilde D(\Sigma):= \{x \in W\;\big|\; H(x)\in[0,a] \}
\end{align} 
For $ t \in (-\epsilon_0,\epsilon_0)$ we will denote by $\varphi^{t}$ the flow of the following vector field
\begin{align}
    X:= \frac{\nabla H}{\vert\vert \nabla H\vert\vert^2}
\end{align}
In particular, $H(\varphi^t)=t$ for $t \in (-\epsilon_0,\epsilon_0)$. This allows us to define  the following diffeomorphism:
\begin{align}\label{hyp_def_of_Phi}
   \Phi:  \Sigma_0 \times  [0,a] \longrightarrow \tilde D(\Sigma)\\\nonumber
    (x,t)\longmapsto \varphi^t(x)
\end{align}
Upon denoting by $\phi^t$ the pullback $\Phi^*(\varphi^t)$, we see that $\phi^t$ satisfies the following properties:
\begin{align}
    \phi^0\equiv \text{Id} \qquad \phi^t: \Sigma_0 \times\{0\} \rightarrow \Sigma_0\times \{t\}
\end{align}
It then follows that the following pullback is cohomologous with $\Omega_0:= \omega\big\vert_{\Sigma_0}$
\begin{align}
    \Omega_t:= \big(\phi^t\big)^*(\Phi^*\omega) \qquad \text{for each $t \in [0,a]$}
\end{align}
where by a slight abuse of notation, we denote by $\omega$ the restriction $\tilde D(\Sigma)\big\vert_{\omega}$. Indeed, a simple computation using Cartan's magic formula and \cite[Lemma 2.2.1]{ Geiges_book} shows that the following holds for each $t \in [0,a]$:
\begin{align}\label{eta_t_hyp_def_eqn}
    \Omega_t -\Omega_0 = \Phi^*\bigg( d\Big(\int^t_0 (\varphi^s)^* (\iota_{X} \omega) \; ds\Big)\bigg) = d\Big(\int^t_0 \eta_s \;ds\Big)
\end{align}
The above equation motivates us to define the following family of $1$-forms on $\Sigma_0$:
\begin{align}
   \alpha_t:= \int^t_0 \eta  \qquad \text{for $t \in [0,a]$}
\end{align}
So that \eqref{eta_t_hyp_def_eqn} can be expressed in the following succinct way:
\begin{align}
    \Omega_t -\Omega_0 = d\alpha_t
\end{align}
The following proposition shows that the Volume of the region $\tilde D(\Sigma)$ has a convenient expression in terms of the Volume functional defined in \eqref{Vol_function_def_eqn}:
\begin{proposition}\label{hyp_vol_prop}
    Let $(W,\omega)$ be a connected symplectic manifold of dimension $2n$ admitting a Zoll hypersurface $\Sigma_0 \hookrightarrow W$. Given another closed, connected and embedded hypersurface $\Sigma \hookrightarrow W$ that is $C^2$-close to $\Sigma_0$, the following holds:
    \begin{align}
        \int_{\Sigma_0 \times [0,a]} (\Phi^*\omega)^n =  nVol_{\Omega_0}(\Omega_a)
    \end{align}
    where $\Omega_0:= \omega\vert_{\Sigma_0}$, $\Omega_a = (\phi^a)^*\tilde \Omega_a$ where $\tilde \Omega_a:= \omega\vert_{\Sigma}$ and $\Phi$ is the diffeomrophism defined in \eqref{hyp_def_of_Phi}.
\end{proposition}
\begin{proof}
   On the product manifold $\Sigma_0 \times [0,a]$, the $2$-form 
   $\Phi^*\omega$ has the following expression:
   \begin{align}\label{Phi_omega_decomp}
       \Phi^*\omega = dt\wedge \eta_t+ \Omega_t
   \end{align}
As a consequence, the $2n$-form $\big(\Phi^*\omega\big)^n$ admits the following binomal expansion:
\begin{align}\label{phi_omega_n_binom}
    \big(\Phi^*\omega\big)^n = \big(dt\wedge \eta_t\wedge\Omega_t\big)^n = \sum^n_{k=0}\binom{n}{k}\big(dt \wedge \eta_t\big)^k \wedge \big(\Omega_t\big)^{n-k}
\end{align}
Since the $1$-form $dt \in \Omega^1([0,a])$, for any $k>1$, the following holds:
\begin{align}
    \big(dt\wedge\eta_t\big)^{k} =0
\end{align}
And since for each $t \in [0,a]$, the $2$-form $\Omega_t \in \Omega^2(\Sigma_0)$, the $2n$-form $\Omega_t^n =0$ because $\Sigma_0$ has dimension $2n-1$. Therefore, we can rewrite \eqref{phi_omega_n_binom} as the following:
\begin{align}
     \big(\Phi^*\omega\big)^n =n dt \wedge \eta_t\wedge \Omega^{n-1}_t
\end{align}
The required integral then has the following expression:
\begin{align}\label{hyp_vol_int_1}
    \int_{\Sigma_0\times [0,a]} \big(\Phi^*\omega\big)^n = \int_{\Sigma_0\times [0,a]} n dt \wedge \eta_t\wedge \Omega^{n-1}_t = n\int^a_0\int_{\Sigma_0}  \eta_t\wedge\Omega^{n-1}_t\;dt 
\end{align}
Computing further, we obtain the following:  
\begin{align}\label{hyp_vol_int_2}
 n\int^a_0\int_{\Sigma_0}  \eta_t\wedge \Omega^{n-1}_t\;dt  = n\int^a_0\frac{d}{dt}Vol_{\Omega_0}(\Omega_t) \; &dt=   n \Big(Vol_{\Omega_0}(\Omega_a) -Vol_{\Omega_0}(\Omega_0)\Big)\\\nonumber = &nVol_{\Omega_0}(\Omega_a)
  \end{align}
  Where the first equality follows form the fact that by \cite{B-K-Odd_Symp} the following is true:
  \begin{align}
      d_{\alpha_a}Vol_{\Omega_0}(\Omega_a)\cdot \beta = \int_{\Sigma_0} \beta \wedge \Omega_a^{n-1} 
  \end{align}
\end{proof}
We now define the action and systole of an arbitrary regular level set of $H$.  \\
\\
The Hamiltonian vector field of $H_0$ is the unique vector field satisfying the following differential equation:
\begin{align}
    \iota_{X_{H_0}}\omega = dH_0
\end{align}
We denote by $\tilde X_H$ the pullback: $\Phi^*(X_H)$. When restricted to $\Sigma_0 \times \{t\}$ for some $t \in [0,1] $, the flow of $\tilde X_H$ generates the characteristic distribution of $\tilde \Omega_t:= \Phi^*\omega\big\vert_{\Sigma_0 \times \{t\}}$.\\
\\
We now pick a closed leaf $\gamma$ of the characteristic foliation of $\tilde \Omega_a$ and since $\Sigma$ is assumed to be $C^2$-close to $\Sigma_0$, we can find a leaf $ \gamma_0$ of the characteristic foliation of $\Omega_0= \Phi^*\omega\big\vert_{\Sigma_0\times \{0\}}$ that is $C^1$-close to $\gamma$. In such a setting, we will denote by $\Gamma_{\gamma}$ a cylinder parameterizing a homotopy between $\gamma$ and $\gamma_0$ and we define the action of $\gamma$ to be the following:
\begin{align}
    \mathcal{A}_{\Sigma}(\gamma):= \int_{\Gamma_{\gamma}}\Phi^*\omega
\end{align}
Since $\omega$ is closed, the above integral is invariant under homotopies with a fixed boundary. We therefore choose a short cylinder in the following way: let $\tilde \Gamma_{\gamma}$ be a projection of $\Gamma_{\gamma}$ to $\Sigma_0$ and consider the cylinder defined to be the concatenation $\tilde \Gamma_{\gamma} \# (\phi^t)^*\gamma$. Using \eqref{Phi_omega_decomp}, we observe that the above integral has the following expression:
\begin{align}\label{hyp_action_decomp_1}
    \int_{\Gamma_{\gamma}}\Phi^*\omega = \int_{\Gamma_{\gamma}} dt \wedge \eta_t +  \int_{\Gamma_{\gamma}} \tilde \Omega_t   =\int^a_0\int_{(\phi^t)^*\gamma} \eta_t \; dt + \int_{\tilde \Gamma_{\gamma} \# (\phi^t)^*\gamma} \Omega_t
\end{align}
By definition of the $1$-form $\alpha_t$, the first factor in \eqref{hyp_action_decomp_1} can be expressed in the following form:
\begin{align}\label{hyp_action_decomp_2}
\int^a_0\int_{(\phi^t)^*\gamma} \eta_t \; dt = \int_{(\phi^a)^*\gamma} \alpha_a
\end{align}
Putting \eqref{hyp_action_decomp_1} and \eqref{hyp_action_decomp_2} together, we obtain the following expression for $\mathcal{A}_{\Sigma}(\gamma)$:
\begin{align}
    \mathcal{A}_{\Sigma}(\gamma) = \int_{\Gamma_{\gamma}}\Phi^*\omega = \int_{(\phi^a)^*\gamma} \alpha_a + \int_{\tilde \Gamma_{\gamma} \# (\phi^t)^*\gamma} \Omega_0 = \mathcal{A}_{\Omega_a}((\phi^a)^*\gamma)
\end{align}
where $\mathcal{A}_{\Omega_a}$ is the action defined in Section \ref{action_section}. Denoting by $\mathfrak{h} \in \pi_1(\Sigma_0)$ the homotopy class represented by the leaves of the characteristic foliation of $\Sigma_0$, we recall that the set of short loops in $\Sigma_0$ were denoted by $\Lambda_{\mathfrak{h}}(\Sigma_0)$ (c.f Section \ref{action_section}). We can now conveniently express the systole of $\Sigma$ in the following way:
\begin{align}\label{sys_in_odd-symp}
    \text{sys}(\Sigma)= \inf_{\bar \gamma \in \Lambda_{\mathfrak{h}}(\Sigma_0)} \mathcal{A}_{\Omega_a}(\bar \gamma)
\end{align}
Since the pair $(\Sigma_0,\Omega_0)$ is a Zoll odd-symplectic manifold, there exists an oriented $S^1$-bundle $p: \Sigma_0 \rightarrow M$ whose leaves are the leaves of characteristic foliation of $\Omega_0$. Given a connection $1$-form $\alpha_0 \in \Omega^1(\Sigma_0)$ for such an $S^1$-bundle, it is easy see that the following realtions are automatically satisfied
\begin{align}
  \alpha_0 \in \mathcal{C}(\Sigma_0,\Omega_0) \;\;\; \text{and} \;\;\;  \ker \Omega_0 \subset \ker d\alpha_0
\end{align}
Therefore, the odd-symplectic form $\Omega_0$ and $1$-form $\alpha_0$ determine a stable Hamiltonian structure on $\Sigma_0$ in the sense of \cite{CieliebakFrauenfelderPaternain2010}. As a consequence, there exists an $\tilde \epsilon_0 >0$ such that the neighborhood $\Sigma_0 \times (-\tilde\epsilon_0,\tilde \epsilon_0)$ is foliated by Zoll hypersurfaces. By re-normalizing if necessary we can parametrize the leaves of this foliation in the following way: we denote the leaves by $\hat{\Sigma}_t$ for $t \in (-\tilde\epsilon_0,\tilde \epsilon_0)$ such that the following holds for every $t \in  (-\tilde\epsilon_0,\tilde \epsilon_0)$:
\begin{align}
    \text{sys}(\hat{\Sigma}_t) = t = \mathcal{A}_{\hat{\Sigma}_t}(\tilde\gamma_t) \;\;\; \text{\small for any leaf $\tilde \gamma_t$ of the characteristic foliation of $\hat{\Sigma}_t$}
\end{align}
We can also assume with any loss of generality that $\tilde \epsilon_0$ was chosen so that $\text{sys}(\Sigma) \in [0,\tilde\epsilon_0)$. In such a situation, since the hypersurface $\hat{\Sigma}_{\text{sys}(\Sigma)}$ is Zoll, by Proposition \ref{hyp_vol_prop} the following holds:
\begin{align}
\text{Vol}\big(\tilde D(\hat{\Sigma}_{\text{sys}(\Sigma)})\big) = P\big(\text{sys}(\Sigma)\big)
\end{align}
where the volume of the manifold $\tilde D(\hat{\Sigma}_{\text{sys}(\Sigma)})$ is computed with respect to $\omega$. The $1$-form $\eta_a$ defined in \eqref{eta_t_hyp_def_eqn} has the following Taylor expansion:
\begin{align}\label{eta_taylor}
    \eta_a = \eta_0 + a\dot\eta_0 + \frac{a^2}{2}\ddot \eta_0 + \frac{a^3}{6} \partial_a\ddot \eta_0 + \mathcal{O}(a^4)
\end{align}
Therefore, by \eqref{eta_t_hyp_def_eqn}, the odd-symplectic form $\Omega_a$ will be $C^2$-close to $\Omega_0$ if $a>0$ chosen so that $a^3$ is close to zero. So, we can now use Theorem \ref{systolic_ineq} to obtain the following inequality:
\begin{align}
    Vol\big(\tilde D(\hat{\Sigma}_{\text{sys}(\Sigma)})\big) = P\big(\text{sys}(\Sigma)\big) \leq Vol_{\Omega_0}\big(\Omega_a\big) = Vol\big(\tilde D(\Sigma)\big)
\end{align}
with equality holding if and only if the action of every leaf of the characteristic foliation of $\Omega_a$ equals $\text{sys}(\Sigma)$, which happens if and only if $\Sigma$ is a Zoll hypersurface as required.  

\section{Magnetic dynamics on Riemannian manifolds}\label{mag_dyn_riem_section}
We begin this section with a justification of the fact that odd-symplectic forms model the dynamics of a magnetic system on a regular energy level.\\
\\
Observe that given a magnetic pair $(g,\sigma)$ on a Riemannian manifold $(W,g)$ of dimension $n\geq 3$, the trajectories of the associated magnetic geodesic flow $\Phi^t_{g,\sigma}$ on a regular level set $H^{-1}_{g}(k)$ can be viewed as the leaves of the characteristic foliation of the following odd-symplectic form defined on the manifold $S_g^*W \cong H^{-1}_{g}(\frac{1}{2})$:
\begin{align}\label{omega_s_def}
    \Omega^s_{g,\sigma}:= d\lambda-\pi^*\sigma_s \;\;\; \text{where $\sigma_s:= s\sigma$ and $s = \frac{1}{\sqrt{2k}}$}
\end{align}
The above odd-symplectic form is called the odd-symplectic form determined by the magnetic pair $(g,\sigma)$. It is also easy to see from \eqref{omega_s_def} that the odd-symplectic form defined above will be Zoll if and only if the magnetic system determined by the pair $(g,\sigma)$ is Zoll at strength $s$. In the sequel, if the level set of the metric Hamiltonian is obvious, in the interest of a more concise presentation of the formulas, we will drop the constant $s$ from the notation.\\
\\
In this section, we intend to compare the dynamics of the two different magnetic systems on a fixed odd-symplectic manifold. To do so, we begin by picking two magnetic pairs $(g,\sigma) , (\tilde g,\tilde \sigma) \in \mathcal{M}(W) \times \Omega^2(W)$ on the Riemannian manifold $(W,g)$, and define the following function:
\begin{align}\label{def_of_the_function_f}
      f_{\tilde g}:\;&T^*W \rightarrow \mathbb{R}\\\nonumber
      &(p,q)\mapsto \frac{\vert\vert p\vert\vert_{\tilde g}}{\vert\vert p\vert\vert_{g}}
\end{align}
On the unit sphere sub-bundle $S_g^*W$ of the Riemannian manifold $(W,g)$, the trajectories of the magentic geodesic flow $\Phi_{\tilde g,\tilde \sigma}$ are the leaves of the characteristic foliation of the following odd-symplectic form:
\begin{align}\label{odd-symp_determiend_by_triple}
  \tilde \Omega^{s}_{\tilde g,\tilde \sigma}:=  d(f_{\tilde g}\lambda)-s\pi^*\tilde\sigma \Big|_{S_g^*W}
\end{align}
On the manifold $S_g^*W$, the odd-symplectic form defined by the magnetic pair $(g,\sigma)$ has the form \eqref{omega_s_def} and that defined by the magnetic pair $(\tilde g,\tilde \sigma)$ has the form \eqref{odd-symp_determiend_by_triple}. Since the first summand in \eqref{odd-symp_determiend_by_triple} is exact, it is easy to see that for each fixed $s\in \mathbb{R}$, the two odd-symplectic forms determiend on the manifold $S_g^*W$ by the two magnetic pairs $(g,\sigma)$ and $(\tilde g,\tilde \sigma)$ are cohomologous if and only if $\sigma$ and $\tilde \sigma$ are.\\
\\
We now fix a closed and connected Riemannian manifold $(W,g)$ of dimension $n\geq 3$, a Zoll magnetic pair $(g,\sigma)$ and strength $s \in \mathbb{R}$ at which the pair is Zoll. We will also assume that the class $C_0 \in H^2(W,\mathbb{R})$ represented by $\sigma$ satisfies the following condition:
\begin{align}\label{cohomology_assumption_mag_systems}
   0=\pi^*\big[C^{n-1}_0\big] \in H^{2n-2}(S^*_gW,\mathbb{R})
\end{align}
We also fix an $S^1$-bundle associated with the Zoll odd-symplectic form determined by the fixed Zoll magnetic pair $(g,\sigma)$ of strength $s$ and denote by $\mathfrak{h} \in \pi_1(S_g^*W)$ the homotopy class represented by the fibers of this $S^1$-bundle.\\
\\
Moreover, observe that since $W$ has dimension $n\geq 3$, the fibers of the following bundle are contractible:
\begin{align}\label{S^m_fibration}
    S^{n-1}\hookrightarrow S_g^*W \xrightarrow{\pi} W
\end{align}
where by a slight abuse of notation, we denote by $\pi$ the restriction of foot-point projection to $S_g^*W$. As a consequence, using the long exact sequence for homotopy groups of the fibration \eqref{S^m_fibration}, the following holds:
\begin{align}
    \pi_1(S_g^*W) \cong \pi_1(W)
\end{align}
Motivated by the above observation, we will hence forth identify $ \pi_1(S_g^*W)$ with $ \pi_1(W)$.\\
\\
In this setting, using the odd-symplectic action defined in Section \ref{action_section}, the magnetic action of an arbitrary magnetic pair $(\tilde g,\tilde \sigma) \in \mathcal{M}(W) \times \Xi^2_{C_0}(W)$ is defined to be the following:
\begin{align}\label{mag_action_def_eqn}
\mathcal{A}^{\text{mag}}_{\tilde g,\tilde \sigma}:\; \Lambda_{\mathfrak{h}}(S^*&W) \rightarrow \mathbb{R}\\\nonumber
    &\gamma\mapsto \mathcal{A}_{\tilde \Omega_{\tilde g,\tilde \sigma}}(\gamma)
\end{align}
where $\tilde \Omega_{\tilde g,\tilde \sigma}:= d(f_{\tilde g}\lambda)-\pi^*\tilde\sigma\big|_{S_g^*W}$. We will also denote by $\tilde \chi^{s}(\tilde g, \tilde \sigma)$ the set of closed leaves of the characteristic foliation of $\tilde \Omega^{s}_{\tilde g,\tilde \sigma}$ that can be joined to the leaves of the characteristic foliation of the Zoll odd-symplectic form $\big(d\lambda-s\pi^*\sigma\big)\big\vert_{S_g^*W}$ by a short homotopy (c.f Section \ref{action_section}). Recall that in Section \ref{mag_sys_sec_intro}, we introduced the notation
$\chi^{s}(\tilde g,\tilde \sigma)$ for the set of closed magnetic geodesics of the
pair $(\tilde g,\tilde \sigma)$ of strength $s$ that are $C^2$-close to those of $(g,\sigma)$.
Since these geodesics are parametrized by arc length, there is a canonical lift map
\begin{align}
    L_{\tilde g,g}:\chi^{s}(\tilde g,\tilde \sigma)\longrightarrow
\tilde \chi^{s}(\tilde g,\tilde \sigma)
\end{align}
obtained by first lifting
\begin{align}
    \gamma \mapsto \big(t\mapsto \tilde g_{\gamma(t)}(\dot\gamma(t),\cdot)\big)
\end{align}
to $S^*_{\tilde g}W$ and then composing with the radial identification
$S^*_{\tilde g}W \rightarrow S^*_gW$ obtained via the function $f_{\tilde g}$. With this convention, the following holds:
\begin{align}
    \tilde \chi^{s}(\tilde g,\tilde \sigma)
=
L_{\tilde g,g}\big(\chi^{s}(\tilde g,\tilde \sigma)\big)
\end{align}
Thus, for $\gamma\in \chi^{s}(\tilde g,\tilde \sigma)$, we denote its \emph{canonical}
lift by $\tilde\gamma:= L_{\tilde g,g}(\gamma) \in \tilde \chi^{s}(\tilde g,\tilde \sigma)$. The justification of the fact that the set $\tilde \chi^{s}(\tilde g,\tilde \sigma)$ is non-empty when the magnetic pair $(\tilde g,\tilde \sigma)$ is sufficiently $C^3$-close to the pair fixed Zoll pair $(g,\sigma)$ is provided in Section \ref{mag_sys_ineq_general_proof}. Since the strength $s$ is fixed throughout and no ambiguity can arise, we will suppress it from the notation in the sequel. For the benefit of the reader, we recall from the introduction the following convention that was introduced: when picking a Zoll magnetic pair, we will always implicitly assume that its dynamics is being considered at a fixed strength where it is Zoll. In addition, in the sequel, we would like to compare the dynamics of such a pair with another appropriately chosen magnetic pair. In such a situation, we will always assume that both pairs are being considered at a fixed strength, where the Zoll magnetic pair is Zoll.\\
\\
In this context, the following proposition furnishes a convenient expression for the magnetic action.
\begin{proposition}\label{mag_sys_action_general_propsition}
    Let $(W,g)$ be a closed, connected Riemannian manifold of dimension at least three with the property that there exists a cohomology class $C_0 \in H^2(W,\mathbb{R})$ satisfying \eqref{cohomology_assumption_mag_systems} and admitting a representative $\sigma \in \Xi^2_{C_0}(W)$ such that the magnetic pair $(g,\sigma)$ is Zoll. Then, for any other magnetic pair $(\tilde g,\tilde\sigma) \in \mathcal{M}(W)\times \Xi_{C_0}^2(W)$ with the property that the set $\chi(\tilde g,\tilde\sigma)$ is non-empty, the following holds for any $\gamma \in \chi(\tilde g,\tilde\sigma)$:
    \begin{align}
\mathcal{A}^{\text{mag}}_{\tilde g,\tilde \sigma}(\gamma)=  \text{length}_{\tilde g}(\gamma) - \int_{\Gamma_{\gamma}} \tilde \sigma - \text{length}_{g}(\gamma_0) + \int_{\gamma_0}\eta
    \end{align}
    where $\gamma_0 \in \chi(g,\sigma)$ is a magnetic geodesic whose lift $\tilde \gamma_0 \in \tilde \chi(g,\sigma)$ is contained in the same good neighborhood of $\tilde \gamma$, $\Gamma_\gamma$ is a short homotopy between $\tilde \gamma$ and $\tilde \gamma_0$ and $\eta \in \Omega^1(W)$ is any $1$-form such that $\sigma_0 = \sigma+d\eta$.
\end{proposition}
\begin{proof}
    The odd-symplectic form defined on the manifold $S^*_gW$ by the magnetic pair $(g,\sigma)$ has the form \eqref{omega_s_def} and that defined by the magnetic pair $(\tilde g,\tilde \sigma)$ has the form \eqref{odd-symp_determiend_by_triple}.\\
    \\
    Let $(\tilde g, \tilde\sigma) \in \mathcal{M}(W) \times \Xi^2_{C_0}(W)$ be another magnetic pair such that the set $\chi(\tilde g,\tilde\sigma)$ is non-empty. The odd-symplectic form defined on the manifold $S_g^*W$ is of the form \eqref{odd-symp_determiend_by_triple}, using this fact and the definition of the magnetic action in \eqref{mag_action_def_eqn} and the discussion in Section \ref{action_section}, we see that the magnetic action $\mathcal{A}^{\text{mag}}_{\tilde g,\tilde\sigma}(\gamma)$ of a loop $\gamma \in \chi(\tilde g,\tilde\sigma)$ has the following expression:
\begin{align}\label{mag_action_proof_0}
    \mathcal{A}^{\text{mag}}_{\tilde g,\tilde\sigma}(\gamma) = \int_{\tilde {\gamma}_0} (f_{\tilde g}-1)\lambda - &\int_{\gamma_0}\pi^*\eta  - \int_{\Gamma_{\gamma}} d(f_{\tilde g}\lambda)-\pi^*\sigma \\\nonumber = \int_{\tilde {\gamma}_0} (f_{\tilde g}-1)\lambda - \int_{\gamma_0}\pi^*\eta + &\int_{\tilde {\gamma}_1}(f_{\tilde g}\lambda) - \int_{\tilde {\gamma}_0}(f_{\tilde g}\lambda) - \int_{\Gamma_{\gamma}} \pi^*\sigma \\\nonumber =\int_{\tilde {\gamma}_1}f_{\tilde g}\lambda - \int_{\Gamma_{\gamma}}\pi^* \sigma - &\int_{\tilde {\gamma}_0} \lambda -\int_{\gamma_0} \pi^*\eta  
   \end{align}
   where the second equality above follows from an application of Stokes' theorem and $\eta \in \Omega^1(W)$ is any $1$-from with the following property:
\begin{align}\label{def_of_eat_mag_sys_ineq_1}
\sigma = \sigma_0 + d\eta
   \end{align}
The following is the $\tilde g$-speed of $\gamma$
\begin{align}
    \big(f_{\tilde g}\lambda\big)(\tilde  \gamma)
\end{align}
Therefore, the following holds:
\begin{align}
   \int_{\tilde  \gamma} \big(f_{\tilde g}\lambda\big) = \text{length}_{\tilde g}(\gamma)
\end{align}
We can similarly conclude that the following holds:
\begin{align}
    \int_{\tilde \gamma_0} \lambda = \text{length}_{g}(\gamma_0)
\end{align}
To finish the proof, we need to show that the integral $\int_{\gamma_0}\pi^*\eta$ is independent of the choice of $1$-form satisfying \eqref{def_of_eat_mag_sys_ineq_1}. Towards this end, we begin by proving that the condition \eqref{cohomology_assumption_mag_systems} implies that the cohomology class $\tilde C_0:= \Big[d\lambda-\pi^*\sigma_0\big|_{S_g^*W}\Big] \in H^2(S^*_gW,\mathbb{R})$ satisfies the condition:
\begin{align}
    \tilde C^{n-1}_0 =0
\end{align}
Indeed, the following holds because the $2$-form $(d\lambda)|_{S_g^*W}$ is exact:
\begin{align}
    \tilde C_0 = \pi^*[\sigma]= \pi^*C_0
\end{align}
This inturn implies the following:
\begin{align}
    \Big[d\lambda-\pi^*\sigma_0\big|_{S_g^*W}\Big]^{n-1} = \big[\pi^* C_0\big]^{n-1} = \pi^*\big[ C_0\big]^{n-1} =0
\end{align}
where the final equality above follows from the condition \eqref{cohomology_assumption_mag_systems}.\\
\\
We now pick an arbitrary closed $1$-form $\theta \in \Omega^1(W)$, we can define $\eta':= \eta + \theta$ and observe that the following holds:
\begin{align}\label{int_eta_1}
       \int_{\tilde \gamma_0}\pi^*\eta' = \int_{\tilde \gamma_0} \pi^*\eta + \int_{\tilde \gamma_0}\pi^*\theta 
\end{align}
However, $\tilde  \gamma_0$ is the fiber of the $S^1$-bundle associated with the Zoll odd-symplectic form defined by the Zoll magnetic pair $(g,\sigma)$. And since $\tilde C^{n-1}_0 =0$, by \cite[Lemma 4.5]{B-K-Odd_Symp}, the homology class $[\tilde  \gamma_0] \in H_1(S^*_gW,\mathbb{R})$ vanishes. In particular, since the $1$-form $\theta$ in \eqref{int_eta_1} is closed, the following holds:
\begin{align}
    \int_{\tilde \gamma_0}\pi^*\theta = \langle [\tilde  \gamma_0], \theta \rangle =0
\end{align}
Therefore, the integral in \eqref{int_eta_1} is independent of the choice of closed $1$-form  $\theta$. This in turn implies that the last summand in the last line of \eqref{mag_action_proof_0} is independent of the choice of $\eta \in \Omega^1(W)$ satisfying \eqref{def_of_eat_mag_sys_ineq_1}.
\end{proof}
Before proceeding further, we recall some facts about the geometry of cotangent bundles of Riemannian manifolds. Given a Riemannian manifold $(W,g)$, a point $q \in W$ and some $p \in T_q^*W$, the tangent space $T_{p}TW$ can we written as the direct sum of two vector spaces $\mathcal{H}(p)$ and $\mathcal{V}(p)$ called the horizontal and vertical components respectively. This isomorphism is constructed by combining the following two endomorphisms:
\begin{align}
    pr_{\mathcal{H}}: &T_{p}T^*W \rightarrow T_q W \cong \mathcal{\mathcal{H}}(p)  \\\nonumber
   &\xi \mapsto \xi_{\mathcal{H}} := d_{p}\pi(\xi) 
\end{align}
\begin{align}
    pr_{\mathcal{V}}: &T_{p}T^*W \rightarrow T^*_q W \cong \mathcal{V}(p) \\\nonumber
   &\xi \mapsto \xi_\mathcal{V} := \nabla_{t}(Z(0)) 
\end{align}
where $Z:(-\epsilon, \epsilon) \rightarrow T^*W$ is a curve such that $Z(0) = p$ and $\Dot{Z}(0) = \xi$ and $\nabla_t$ represents the covariant derivative along the curve $(\pi \circ Z)$. We now define $\mathcal{H}(p)$ to be the set of all $\xi \in T_pT^*W$ such that $\xi_{\mathcal{V}} =0$ and $\mathcal{V}(p)$ to be the set of all $\xi \in T_{p}T^*W$ such that $\xi_{\mathcal{H}} =0$. We also have the following identifications given by the maps $pr_{\mathcal{H}}$ and $pr_{\mathcal{V}}$ respectively: $\mathcal{H}(p) \cong T_qW$ and $\mathcal{V}(p) \cong T^*_qW$. The Sasaki metric on $T^*W$ is defined with respect to this direct sum decomposition in the following way:
\begin{align}
    \Hat{g} : = g \oplus g
\end{align}
\begin{proposition}\label{vol_mag_sys_general}
        Let $(W,g)$ be a closed, connected Riemannain manifold of dimension at least three, with the property that there exists a cohomology class $C_0 \in H^2(W,\mathbb{R})$ satisfying \eqref{cohomology_assumption_mag_systems} and admitting a representative $\sigma \in \Xi^2_{C_0}(W)$ such that the magnetic pair $(g,\sigma)$ is Zoll. Then, for any other magnetic pair $(\tilde g,\tilde \sigma) \in \mathcal{M}(W) \times \Xi^2_{C_0}(W)$, the following holds:
        \begin{align}
            Vol_{\Omega_{g,\sigma}}(\tilde \Omega_{\tilde g, \tilde \sigma}) =     n\text{Vol}(B^n) \Big( \text{Vol}_{\tilde g}(W) - \text{Vol}_{g}(W) \Big)
        \end{align}
        where $\Omega_{g,\sigma}$ is the odd-symplectic form defined on the manifold $S_g^*W$ by the magnetic pair $(g,\sigma)$ (c.f \eqref{omega_s_def}) and $\tilde \Omega_{\tilde g,\tilde\sigma}$ is the odd-symplectic form on the same manifold defined by the magnetic pair $(\tilde g,\tilde \sigma)$ (c.f \eqref{odd-symp_determiend_by_triple}). In particular if $\text{Vol}_{\tilde g}(W) = \text{Vol}_{g}(W)$ then, the following holds: \begin{align}
            Vol_{\Omega_{g,\sigma}}(\tilde \Omega_{\tilde g, \tilde \sigma}) =0
        \end{align}
    \end{proposition}
    \begin{proof}
        We begin the proof by fixing some notation. We will denote by $n \in \mathbb{N}$ the dimension of the Riemannian manifold $(W,g)$ and we will fix a $1$-form $\eta \in \Omega^1(W)$ satisfying \eqref{def_of_eat_mag_sys_ineq_1}.\\
        \\
To avoid overloading the notation in this proof, we will suppress pullbacks by the foot-point
projection $\pi:S^*W\rightarrow W$ throughout the proof; in particular, forms on $W$
such as $\sigma$, $\tilde \sigma$ and $\eta$ are implicitly identified with their pullbacks to
$S^*W$.\\
\\
The integral being computed here is the following:
\begin{align}\label{kahler_vol_int_1}
\int^1_0\int_{S^*_gW} \big((f_{\tilde g}-1)\lambda - \eta\big) \wedge\bigg(\Omega_{g,\sigma} + d(rf_{\tilde g}-1)\lambda - rd\eta \bigg)^{n-1} \; dr
\end{align}
where $\eta \in $ $\Omega_{g,\sigma}$ is the following odd-symplectic form:
\begin{align}
   \Omega_{g,\sigma}:= d\lambda -\pi^*\sigma_0\Big|_{S_g^*W}
\end{align}
The expression for the $2$-form $\Omega_{g,\sigma} + d(rf_{\tilde g}-1)\lambda - rd\eta$ can be simplified in the following way when treating $r \in \mathbb{R}$ as a constant:
\begin{align}
    \Omega_{g,\sigma} + d\big((&rf_{\tilde g}-1)\lambda\big) - rd\eta \; = \; d\lambda -\sigma + d(r(f_{\tilde g}-1)\lambda - r\eta ) \\\nonumber = &d\big(\big((1-r)+rf_{\tilde g} \big)\lambda \big) - (rd\eta +\sigma) = d(f^r_{\tilde g}\lambda) + \sigma_r
\end{align}
where $f^r_{\tilde g}:= (1-r) + rf_{\tilde g}$ and $\sigma_r := -(\sigma + rd\eta)$. Using this simplified expression, the integral in \eqref{kahler_vol_int_1} takes the following form:
\begin{align}
    \int^1_0\int_{S^*_gW} \big((f_{\tilde g} -1)\lambda +\eta \big)\wedge \big(d(f^r_{\tilde g}\lambda) +\sigma_r \big)^{n-1}
\end{align}
Using the binomial theorem, we see that the $2n-1$-form $\big((f_{\tilde g} -1)\lambda +\eta \big)\wedge \big(d(f^r_{\tilde g}\lambda) +\sigma_r \big)^{n-1}$ has the following expression:
\begin{align}\label{kahler_vol_int_2}
    \sum^{n-1}_{k=0}  &\binom{n-1}{k} (f_{\tilde g}-1)\lambda \wedge \big(d(f^r_{\tilde g}\lambda)\big)^k \wedge (\sigma_r)^{n-1-k} \\\nonumber &+ \sum^{n-1}_{k=0} \binom{n-1}{k} \eta \wedge \big(d(f^r_{\tilde g}\lambda)\big)^k \wedge (\sigma_r)^{n-1-k}
\end{align}
We now claim that the integral of the second summand in \eqref{kahler_vol_int_2} over $S^*_gW$ is zero. We begin by considering the case $k=0$ in \eqref{kahler_vol_int_2}. In this case, the second summand can be written in the following form
\begin{align}\label{eta_wedge_vol_1}
    \eta \wedge \big(\sigma_r \big)^{n-1}
\end{align}
Since both $\eta$ and $\sigma_r= -(\sigma + rd\eta)$ are pullbacks to $S_g^*W$ of $2$-forms on $W$, they evaluate as non-zero only on vectors in the horizontal distribution of $TS_g^*W$. However, $TS^*_gW$ has only $2n$ linearly independent horizontal directions. Therefore, the above $2n-1$-form must vanish. Suppose now that $0< k< 2n-1$ then, the all of the vertical components in the following integral are supplied by the $2k$-form $\big(d(f^r_{\tilde g}\lambda)\big)^k$:
\begin{align}
    \binom{n-1}{k} \int^1_0\int_{S^*_gW} \eta \wedge \big(d(f^r_{\tilde g}\lambda)\big)^k \wedge (\sigma_r)^{n-1-k}
\end{align}
However, the tangent space $TS^*_gW$ has $2n-1$ linearly independent vertical directions and since we are only looking at the case where $k<2n-1$, the above integral does not have sufficient vertical terms to evaluate as non-zero, therefore, it must vanish. Finally, if $k=2n-1$ then, the required integral can be written in the following form using the fact that $d^2=0$ for the exterior derivative:
\begin{align}
    \int^1_0\int_{S^*_gW} \eta \wedge \big(d(f^r_{\tilde g}\lambda)\big)^{n-1}=
\int_{S^*_gW}
\eta\wedge
d\Bigl(
f^{r}_{\tilde g}\lambda\wedge d(f^{r}_{\tilde g}\lambda)^{n-2}
\Bigr)
\end{align}
Since $S^*_gW$ is closed, integrating by parts, we obtain the following:
\begin{align}\label{mag_vol_general_1}
    \int_{S_g^*W}
\eta\wedge d(f^{r}_{\tilde g}\lambda)^{n-1}
=
\int_{S_g^*W}
d\eta\wedge f^{r}_{\tilde g}\lambda\wedge d(f^{r}_{\tilde g}\lambda)^{n-2}
\end{align}
The form $d\eta$ only evaluates as non-zero on horizontal vectors in $TS^*_gW$. Moreover, the following $2n-3$-form has vertical degree at most $2n-4$, since $\lambda$ is purely horizontal, that is, it evaluates as non-zero only on horizontal vectors in $TS^*_gW$:
\begin{align}
    f^{r}_{\tilde g}\lambda\wedge d(f^{r}_{\tilde g}\lambda)^{n-2}
\end{align}
Therefore the integrand in \eqref{mag_vol_general_1} has vertical degree at most $2n-4$, which is again too small to contribute to a non-zero integral over the manifold $S_g^*W$. Consequently, the integral in \eqref{mag_vol_general_1} vanishes, as required.\\
\\
We can now conclude that the following integral vanishes for every $k$:
\begin{align}
    \binom{n-1}{k}\int_{S^*_gW} \eta \wedge \big(d(f_g^r\lambda) \big)^k \wedge (\sigma_r)^{n-1-k}  = 0 \;\; \forall k
\end{align}
Therefore, only the first summand in \eqref{kahler_vol_int_2} contributes to the integral \eqref{kahler_vol_int_1} i.e, the following holds:
\begin{align}\label{kahler_vol_3}
&\int^1_0\int_{S^*_gW} \big((f_{\tilde g}-1)\lambda - \eta\big) \wedge\bigg(\Omega_{g,\sigma} + d(rf_{\tilde g}-1)\lambda - rd\eta \bigg)^{n-1} \; dr \\\nonumber &= \sum^{n-1}_{k=0}  \binom{n-1}{k} \int^1_0\int_{S^*_gW} (f_{\tilde g}-1)\lambda \wedge \big(d(f^r_{\tilde g}\lambda)\big)^k \wedge (\sigma_r)^{n-1-k} \; dr
\end{align}
The expression for the $3$-form $\lambda \wedge \big(d(f^r_{\tilde g}\lambda)\big)$ can be simplified in the following way:
\begin{align}
   \lambda \wedge d\bigl(f_{\tilde g}^{\,r}\lambda\bigr)
=
\lambda \wedge
\left(df_{\tilde g}^{\,r}\wedge \lambda
+
f_{\tilde g}^{\,r}d\lambda\right)
=
f_{\tilde g}^{\,r}\lambda\wedge d\lambda.
\end{align}
where the final equality above follows from the fact that $\lambda \wedge \lambda =0$. As a consequence of the above observation, the following holds:
\begin{align}
    \lambda \wedge \bigl(d(f_{\tilde g}^{\,r}\lambda)\bigr)^{k}
=
\bigl(f_{\tilde g}^{\,r}\bigr)^{k}
\lambda \wedge (d\lambda)^{k}
\end{align}
To evaluate as non-zero, the $2$-form $d\lambda$ must be evaluated on one vector in the vertical distribution and one in the horizontal distribution. As a consequence, the $2k+1$-form $ \lambda \wedge \bigl(d(f_{\tilde g}^{\,r}\lambda)\bigr)^{k}$ needs to be evaluated on exactly $k$ vertical and $k+1$ horizontal vectors to be non-zero. This observation combined with the facts that $TS^*_gW$ has exactly $2n$ horizontal and $2n-1$ vertical linearly independent directions and the $2$-form $\sigma_r$ evaluates as non-zero only on two horizontal vectors, imply that the following integral is non-zero if and only if $k=n-1$:
\begin{align}
    \int_{S^*_gW} (f_{\tilde g}-1)\lambda \wedge \big(d(f^r_{\tilde g}\lambda)\big)^k \wedge (\sigma_r)^{n-1-k}
\end{align}
Therefore, the integral in \eqref{kahler_vol_3} simplifies to the following:
\begin{align}\label{Kahler_vol_4}
\int^1_0\int_{S^*_gW} \big((f_{\tilde g}-1)\lambda - \eta\big) \wedge\bigg(\Omega_{g,\sigma} + d(rf_{\tilde g}-1)\lambda - rd\eta \bigg)^{n-1} \; dr\\\nonumber = \int^1_0\int_{S^*_gW} (f_{\tilde g}-1)\lambda \wedge \big(d(f_{\tilde g}^r\lambda) \big)^{n-1}\; dr = \frac{1}{n} \int_{S^*_gW} (f^{n}_{\tilde g}-1)\lambda \wedge d\lambda^{n-1}
\end{align}
The evaluation of the integral \eqref{Kahler_vol_4} is now straightforward. By the Fubini-Tonelli theorem, it is the product of its horizontal and vertical components (c.f \cite[Pg.3]{Abbondandolo-Benedetti}). So, we obtain the following:
\begin{align}
    \int_{S^*_gW}
f_{\tilde g}^{\,n}\lambda\wedge(d\lambda)^{n-1}
=
\frac{n\pi^{n/2}}{\Gamma\left(\frac n2+1\right)}
\text{Vol}_{\tilde g}(W)
\end{align}
where $\text{Vol}_{\tilde g}(W)$ is the $\tilde g$-volume of $W$. Equivalently, we can write the above integral in the following form:
\begin{align}
    \int_{S^*_gW}
f_{\tilde g}^{\,n}\lambda\wedge(d\lambda)^{n-1}
=
n\,\text{Vol}(B^n)\text{Vol}_{\tilde g}(W)
\end{align}
where $\text{Vol}(B^n)$ is the euclidean volume of the unit ball of dimension $n$. Indeed, the following holds:
\begin{align}
    \text{Vol}(B^n)
=
\frac{\pi^{n/2}}{\Gamma\left(\frac n2+1\right)}
\end{align}
Similarly, we obtain:
\begin{align}
     \int_{S^*_gW}
\lambda\wedge(d\lambda)^{n-1}
=
\frac{n\pi^{n/2}}{\Gamma\left(\frac n2+1\right)}
\text{Vol}_{g}(W) = n \text{Vol}(B^n)\text{Vol}_g(W)
\end{align}
We thus obtain the following, as required:
\begin{align}
Vol_{\Omega_{g,\sigma}}(\tilde \Omega_{\tilde g, \tilde \sigma}) =     n\text{Vol}(B^n) \Big( \text{Vol}_{\tilde g}(W) - \text{Vol}_{g}(W) \Big)
\end{align}
    \end{proof}
\subsection{Proof of Theorem \ref{mag_sys_ineq_general}}\label{mag_sys_ineq_general_proof}
We begin by recalling the geometric setup of Theorem \ref{mag_sys_ineq_general}: $(W,g)$ is a closed and connected Riemannian manifold of dimension $n\geq 3$ and $C_0 \in H^2(W,\mathbb{R})$ is a cohomology class satisfying condition \eqref{cohomology_assumption_mag_systems} and admitting a representative $\sigma \in \Xi^2_{C_0}(W)$ with the property that the magnetic system determined by the pair $(g,\sigma)$ is Zoll; we now consider the following Zoll odd-symplectic form on the manifold $S_g^*W$, defined by the pair $(g,\sigma)$:
\begin{align*}
    \Omega_{g,\sigma} := d\lambda - \pi^*\sigma\big\vert_{S^*_gW}
\end{align*}
The odd-symplectic form defined on the manifold $S^*_gW$ by another magnetic pair $(\tilde g,\tilde \sigma) \in \mathcal{M}(W) \times \Xi^2_{C_0}(W)$ has the following expression:
\begin{align}
    \tilde \Omega_{\tilde g,\tilde \sigma}= d(f_{\tilde g}\lambda)-\pi^*\tilde\sigma \big\vert_{S_g^*W}
\end{align}
where $f_{\tilde g}$ is the function defined in \eqref{def_of_the_function_f}.\\
\\
We now set $\delta>0$ to be the following:
\begin{align}\label{def_delta_mag_ineq_gen}
\delta:=  \max\{\vert \vert g - \tilde g\vert\vert_{C^3}, \vert\vert \sigma -\tilde \sigma\vert\vert_{C^3}\}
\end{align}
And claim that the following holds, for some real constant $C>0$:
\begin{align}\label{mag_ineq_gen_1}
    \big\vert\big\vert \Omega^{s}_{g,\sigma} -  \tilde \Omega^{s}_{\tilde g,\tilde \sigma}\big\vert\big\vert_{C^2} < C\delta
\end{align}
To prove the above claim, we begin by considering the following difference:
\begin{align}
     \Omega_{g,\sigma} -  \tilde \Omega_{\tilde g,\tilde \sigma} = -d\big( (f_{\tilde g} -1) \lambda\big) - \pi^*\big(\sigma -\tilde \sigma\big)
\end{align}
Taking the $C^2$-norms, we obtain the following:
\begin{align}
    \big\vert\big\vert \Omega_{g,\sigma} -  \tilde \Omega_{\tilde g,\tilde \sigma}\big\vert\big\vert_{C^2} \leq C_1 \big\vert\big\vert f_{\tilde g}-1\big\vert\big\vert_{C^3} + C_2 \big\vert\big\vert \sigma -\tilde \sigma\big\vert\big\vert_{C^2}
\end{align}
The loss of one derivative above comes from applying the exterior derivative $d$ to the the $1$-form $(f_{\tilde g} -1) \lambda$. It remains to estimate the difference $f_{\tilde g} -1$ in terms of $\delta$. Towards this end, we observe that on $S_g^*W$, $\vert\vert p\vert\vert^2_g =1$, therefore, the following holds locally:
\begin{align}
    f_{\tilde g}(q,p)= \sqrt{\tilde g^{ij}(q)p_ip_j }
\end{align}
And the above map is smooth on compact subsets of $W$. Hence, by the mean value theorem in the $C^3$-norm, the following holds for some real constant $C_3>0$:
\begin{align}
    \big\vert\big\vert  f_{\tilde g} -1\big\vert\big\vert_{C^3} \leq  C_3\;  \big\vert\big\vert g -\tilde g\big\vert\big\vert_{C^3}
\end{align}
Therefore, we can conclude that there exists a positive real constant $C'>0$ such that the following holds:
\begin{align}
    \big\vert\big\vert \Omega_{g,\sigma} -  \tilde \Omega_{\tilde g,\tilde \sigma}\big\vert\big\vert_{C^2} \leq C'\big(  \big\vert\big\vert \tilde g-g \big\vert\big\vert_{C^3} + \big\vert\big\vert \sigma -\tilde \sigma\big\vert\big\vert_{C^3}\big)
\end{align}
By the definition of $\delta$ in \eqref{def_delta_mag_ineq_gen}, we now obtain the following:
\begin{align}
    \big\vert\big\vert \Omega_{g,\sigma} -  \tilde \Omega_{\tilde g,\tilde \sigma}\big\vert\big\vert_{C^2} \leq 2C'\delta 
\end{align}
setting $C:= 2C'$, we obtain bound claimed in \eqref{mag_ineq_gen_1}.\\
\\
Therefore, we can conclude that there exists a $C^3$-neighborhood $\tilde{\mathcal{U}} \subset \mathcal{M}(W) \times \Xi^2_{C_0}(W)$ of the pair $(g,\sigma)$ with the property that the odd-symplectic form defined by any magnetic pair $(\tilde g,\tilde \sigma) \in \tilde{\mathcal{U}}$ is contained in the neighborhood $\mathcal{U} \subset \Xi^2_{\tilde C_0}(S_g^*W)$ of $\Omega_{g,\sigma}$ satisfying the conditions of Theorem \ref{systolic_ineq} where $\tilde C_0 \in H^2(S_g^*W,\mathbb{R})$ is the cohomology class represented by the odd-symplectic form $\Omega_{g,\sigma} \in \Omega^2(S_g^*W)$ (c.f the proof of Proposition \ref{mag_sys_action_general_propsition}).\\
\\
We now pick a pair $(\tilde g,\tilde \sigma) \in \tilde{\mathcal{U}}$ such that the following holds:
\begin{align}\label{mag_sys_gen_vol_proof_1}
    \text{Vol}_g(W) = \text{Vol}_{\tilde g}(W)
\end{align}
And denote by $\tilde \Omega_{\tilde g,\tilde \sigma}$ the odd-symplectic form defined on $S^*_gW$ by the magnetic pair $(\tilde g,\tilde \sigma)$.\\
\\
Using the assumption \eqref{mag_sys_gen_vol_proof_1}, in Proposition \ref{vol_mag_sys_general}, we see that the following holds:
\begin{align}\label{mag_sys_gen_vol_proof_2}
    \text{Vol}_{\Omega_{g,\sigma}}(\tilde \Omega_{\tilde g,\tilde \sigma})=0
\end{align}
And since $(\tilde g,\tilde \sigma) \in \tilde{\mathcal{U}}$, by the discussion above, $\tilde \Omega_{\tilde g,\tilde \sigma} \in \mathcal{U}$. Using \eqref{mag_sys_gen_vol_proof_2}, we can conclude from Theorem \ref{systolic_ineq} that the following holds:
\begin{align}
    \inf\mathcal{A}_{\tilde \Omega_{\tilde g,\tilde \sigma}} \leq 0
\end{align}
By Proposition \ref{mag_sys_action_general_propsition}, the action of a loop $\gamma \in \chi(\tilde g,\tilde \sigma)$ has the following expression:
\begin{align}
    \text{length}_{\tilde g}(\gamma) - \int_{\pi_*\Gamma_{ \gamma}} \tilde\sigma - \text{length}_{g}(\gamma) + \int_{\gamma_0}\eta
\end{align}
where $\eta \in \Omega^1(W)$ is any $1$-form such that: $\sigma= \tilde \sigma + d\eta$.\\
\\
Taking the infumum over all loops in $\chi(\tilde g,\tilde \sigma)$, we obtain the following, as required:
\begin{align}\label{mag_sys_general_proof_1}
\inf_{\gamma \in \chi(\tilde g,\tilde \sigma)}\Big(\text{length}_{\tilde g}(\gamma)-\int_{\pi_*\Gamma_{\gamma}} \tilde \sigma\Big) \leq \text{length}_{g}(\gamma_0) - \int_{\gamma_0}\eta
\end{align}
with equality holding if and only if the pair $(\tilde g,\tilde \sigma)$ is Zoll.
\section{Magnetic dynamics on K\"ahler manifolds}\label{mag_dynamics_on_kahler_section}
To make this work self-contained, we begin with some relevant preliminaries about K\"ahler manifolds. A K\"ahler manifold is defined to be a triple $(M, \sigma_0, J)$ where $M$ is a smooth manifold, $\sigma_0 \in \Omega^2(M)$ a symplectic form and $J$ an integrable almost complex structure that is compatible with $\sigma_0$. Recall that an almost complex structure $J$ is said to be integrable if it induces complex charts on $M$ and it is compatible with $\sigma_0$ if the pair $(\sigma_0, J)$ determines a Riemannian metric on $M$ in the following way:
\begin{align}\label{kahler_metric}
 g_0(u,v):=   \sigma_0(u,Jv)  \;\;\; \forall\; u,v \in TM.
\end{align}
The Riemannian metric determined by the symplectic form and complex structure corresponding to a K\"ahler manifold will be called the associated K\"ahler metric. The holomorphic sectional curvature of the K\"ahler metric $g_0$ associated with a given K\"ahler manifold $(M,\sigma_0,J)$ is defined to be:
\begin{align}\label{def_of_hol_sectional_curvature}
&\text{Hol}: TM \backslash \{0\} \rightarrow \mathbb{R}\\\nonumber
    (q,v) &\longmapsto \frac{g_0(v, R(v,Jv)Jv)}{\vert\vert v\vert\vert_{g_0}^4}
\end{align}
where $R$ is the Riemannian curvature tensor of $g_0$. We observe here that if $u,v \in TM$ span the same complex line, then $\text{Hol}(u)= \text{Hol}(v)$. Therefore, the holomorphic sectional curvature can be considered a function from the complex projectivization $\mathbb{P}(TM)$ to the real line. In this section, when working with a K\"ahler manifold, we will always make use of the convention of denoting its dimension by $2n$.\\
\\
A recent result of J. Bimmermann \cite{johanna_negative_curvature} asserts the existence of Zoll magnetic systems on K\"ahler manifolds of constant holomorphic sectional curvature (c.f Theorem \ref{zollness_of_mag_systems_kahler}). We fix some notational conventions before we state the relevant version of Bimmermann's result here. Given a K\"ahler manifold $(M, \sigma_0, J)$ the following vector field generates the fiber-wise rotation on $T^*M$:
\begin{align}\label{angular_vf_cpn_def}
V_{(q,p)}:= (Jp)^{\mathcal{V}}
\end{align}
where the $\mathcal{V}$ in the superscript above denotes the lift of the relevant vector to the vertical distribution in $TT^*M$. We will denote by $\tau$ the dual to the above vector field with respect to the Sasaki metric. 
 \begin{align}
        \mathbb{D}_rW:=\{(q,p) \in TW\;\big\vert \;\ \vert\vert p\vert\vert^2_{g} = r\} \;\; \text{for $r \in \mathbb{R}_{+}$}
        \end{align}

In addition to this, to stress the strength of the magnetic system being considered, given some $s \in \mathbb{R}$, we will denote by $X^{s}_{g_0,\sigma} $ the vector field generating the magnetic geodesic flow of the magnetic pair $(g_0,\sigma_0)$ on the Riemannain manifold $(M,g_0)$ of strength $s$. The relevant version of the result of Bimmermann referenced above is the following:
\begin{theorem}\cite[Theorem A \& Theorem B]{johanna_negative_curvature}\label{zollness_of_mag_systems_kahler}
    Let $(M,\sigma_0, J)$ be a K\"ahler manifold of constant holomorphic sectional curvature $\kappa$. Then, there exists a monotonically decreasing function $a:\mathbb{R} \rightarrow \mathbb{R}_{>0}$ and the following symplectomorphism $\Psi$: 
\begin{align}\label{Bimmermann_symplectomorphism}
        \Psi: \bigg(\mathbb{D}_{1}M, d\lambda-s\pi^*\sigma_0\bigg) \rightarrow \bigg(\mathbb{D}_{ a(1)}M, \frac{d\tau}{2} -s\pi^*\sigma_0\bigg) \;\;\; \text{for all $s$ such that $s^2+ \kappa >0$}
    \end{align}
    whose composition with the inclusion $\iota: \mathbb{D}_{a(1)} \hookrightarrow \mathbb{D}_{1}$ is isotopic to the identity and the vector field $\Psi_*(X^{s}_{g_0,\sigma_0})$ is generated by the fiber-wise rotation: \\$e^{2\pi Jt}: T^*M \rightarrow T^*M$ up to time reparametrization. The $1$-form $\tau$ is the Sasaki-dual to the vector field defined in \eqref{angular_vf_cpn_def} and the function $a$ has the following form:
\begin{align}\label{def_of_a_mag_zoll}
        a(1) = \begin{cases}
            \bigg(\frac{2}{\vert\kappa\vert} \big(\sqrt{s^2+\kappa}-s \big)  \bigg)^{\frac{1}{2}}\;\;\; \text{if $\kappa \neq 0$} \\
            \\
            \frac{1}{\sqrt{s}} \;\;\; \text{if $\kappa =0$}
            \end{cases}
    \end{align}
\end{theorem}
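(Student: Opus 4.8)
The plan is to construct $\Psi$ directly as a fibrewise, radially-symmetric symplectomorphism, exploiting two features of the Kähler geometry of a complex space form: that the complex structure is parallel, $\nabla J = 0$, and that constant holomorphic sectional curvature makes the curvature tensor completely explicit. Since $\Psi$ is to be built fibrewise over $M$, the construction is local on the base and tensorial, so it is enough to work pointwise with the curvature tensor of a space of constant holomorphic sectional curvature $\kappa$,
\[
R(X,Y)Z = \frac{\kappa}{4}\Big(g_0(Y,Z)X - g_0(X,Z)Y + g_0(JY,Z)JX - g_0(JX,Z)JY - 2g_0(JX,Y)JZ\Big),
\]
the global models $\mathbb{CP}^n$, $\mathbb{C}^n$ and $\mathbb{CH}^n$ serving only as intuition.

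First I would fix the horizontal--vertical splitting of $TT^*M$ determined by the Levi--Civita connection and express both $d\lambda$ and $\tfrac12 d\tau$ in these terms. Because $J$ is parallel, the angular form $\tau$ --- the Sasaki dual of $V = (Jp)^{\mathcal{V}}$ --- differentiates cleanly: its vertical derivatives reproduce the fibrewise Kähler form, while its horizontal derivatives produce only curvature terms. Feeding in the explicit tensor above, the difference $d\lambda - \tfrac12 d\tau$ becomes an expression depending only on the radius $r = \|p\|_{g_0}$ and on the complex line spanned by $p$ and $Jp$, which is precisely the rigidity that a single radial rescaling can hope to absorb.

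Next I would compute the magnetic geodesic field $X^{s}_{g_0,\sigma_0}$, the Hamiltonian field of $H_{\text{kin}}$ for $d\lambda - s\pi^*\sigma_0$: its horizontal part is the geodesic spray and its vertical part is the Lorentz term built from $sJ$. With the constant-curvature tensor inserted, each orbit stays inside the complex geodesic tangent to its own complex line and rotates rigidly at a rate fixed by $s$, $\kappa$ and $r$. I would then take $\Psi$ to be the fibrewise rescaling $p \mapsto \tfrac{a(r)}{r}\,p$ with $r = \|p\|_{g_0}$; since $\pi\circ\Psi = \pi$, the terms $\pi^*\sigma_0$ match automatically, and requiring $\Psi^*\big(\tfrac12 d\tau - s\pi^*\sigma_0\big) = d\lambda - s\pi^*\sigma_0$ collapses to one relation for the radial profile $a$. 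Solving it produces the stated formula for $a$, and the demand that the profile be real and positive --- equivalently that the magnetic geodesics be elliptic rather than parabolic or hyperbolic --- is exactly the hypothesis $s^2 + \kappa\rho^2 > 0$.

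Finally, the computation in the proof of Theorem \ref{Examples_of_Zoll_odd-symplectic_forms} identifies the Hamiltonian field of $H_{\text{kin}}$ for the model form $\tfrac12 d\tau - s\pi^*\sigma_0$ with the generator of the fibrewise rotation $e^{2\pi J t}$; hence $\Psi_* X^{s}_{g_0,\sigma_0}$ is a time reparametrization of that rotation, which is the asserted Zollness, and a fibrewise radial rescaling is manifestly isotopic to the identity through rescalings, giving the last claim. I expect the main obstacle to be the curved cases $\kappa \neq 0$: one must verify that a single radius-dependent rescaling $a$ simultaneously intertwines the two symplectic forms and conjugates the flows, and it is here --- at the threshold $s^2+\kappa\rho^2 = 0$ separating closed from non-closed magnetic geodesics --- that the full force of constant holomorphic sectional curvature is needed.
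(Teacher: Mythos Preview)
This theorem is not proved in the paper at all: it is stated with the citation \cite[Theorem A \& Theorem B]{johanna_negative_curvature} and used as a black box. The paper introduces it with the phrase ``We fix some notational conventions before we state the relevant version of Bimmermann's result here,'' and then moves directly to using it in the computation of the Zoll polynomial (Proposition~\ref{Zoll_poly_cpn_proposition}) and the action (Proposition~\ref{action_of_fubini_study_mag_system}). There is therefore no proof in the paper to compare your proposal against.

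Your sketch is a reasonable outline of how such a result is typically established --- a fibrewise radial rescaling $p\mapsto \tfrac{a(r)}{r}p$ chosen so that the pullback of the model symplectic form $\tfrac12 d\tau - s\pi^*\sigma_0$ equals the twisted form $d\lambda - s\pi^*\sigma_0$, with the explicit curvature tensor of a complex space form making the calculation tractable --- and this is indeed the shape of Bimmermann's argument in the cited work. But since the present paper does not reproduce that argument, your proposal stands on its own and cannot be checked against anything here. If you intend to include a proof, you would need to carry out the full computation of $d\tau$ in horizontal--vertical coordinates (the paper quotes the result as equation~\eqref{dtau_decomop}, again citing Bimmermann) and verify that the ODE for $a$ integrates to the stated formula; your outline correctly identifies these as the key steps but does not execute them.
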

In the theorem above, if the K\"ahler manifold $M$ has constant negative holomorphic sectional curvature, then there exists a finite $s_c \in \mathbb{R}$ such that $s_c^2= -\kappa$ \cite{johanna_negative_curvature}. It is well known that at this strength, the magnetic geodesic flow can be conjugated to a Horocycle flow, see for instance \cite{johanna_negative_curvature}, \cite{CieliebakFrauenfelderPaternain2010} and \cite{paternain_horocycle}.\\
\\
By the discussion in Section \ref{mag_dyn_riem_section}, the odd-symplectic forms defined on the manifold $S^*_{g_0}M$ by Bimmerman's Zoll magnetic pairs $(g_0,\sigma_0)$ in Theorem \ref{zollness_of_mag_systems_kahler} above are Zoll. Therefore, by Theorem \ref{systolic_ineq}, these odd-symplectic forms satisfy a local systolic inequality. The purpose of this section is calculate this systolic inequality. We begin by fixing some notational conventions,
for the remainder of this section, we will fix a closed K\"ahler manifold $(M, \sigma_0,J)$ of dimension $2n$ for $n\geq 1$, with constant holomorphic sectional curvature $\kappa$ and denote the associated K\"ahler metric by $g_0$. As observed earlier, the pair $(M,g_0)$ determines a Riemannian manifold; we will use this fact in the sequel without any further qualification. In this setting, when talking about a Zoll magnetic pair $(g_0,\sigma_0)$ we will always assume that the resulting magnetic system is being considered at a fixed strength $s$ such that $s^2+\kappa >0$.\\
\\ 
By Bimmermann's theorem, on a K\"ahler manifold $(M,\sigma_0,J)$ of constant holomorphic sectional curvature $\kappa$, the magnetic geodesic flow of the magnetic pair $(g_0,\sigma_0)$ can be conjugated to the fiber-wise rotation $e^{2\pi Jt}: T^*M \rightarrow T^*M$ at appropriate strengths. We now claim that this fiber-wise rotation is an isometry for the Sasaki metric on $T^*M$. Indeed, observe that upon fixing a $q_0 \in M$ and $p_0 \in T_{q_0}^*M$, the horizontal lift of an arbitrary $u \in T^*M$ is defined by:
\begin{align}
   u^{\mathcal{H}}:= \frac{d}{dt}\Bigg|_{t=0} (q(t),v(t))
\end{align}
where $(q(t),v(t))$ is a path in $T^*M$ satisfying the following conditions:
\begin{align}
    q(0) = q_0& \;\;\;\;\;\; \dot{q}(0)=u \\\nonumber v(0) =p_0& \;\;\;\; \nabla_{t}v(0) =0
\end{align}
and $\nabla_{t}$ denotes the covarinat derivative of the K\"ahler metric $g_0$ along the curve $\pi \circ v(t)$. From the definition of the curve $(q(t),v(t))$, it is clear that 
$u^{\mathcal{H}} \in \mathcal{H}(p_0)$. Moreover, using the explicit 
expression for the flow $\Phi_{g_0,\sigma_0}^s$, we compute:
\begin{align}
    d \Phi_{g_0,\sigma_0}^s\big|_{(q(0),v(0)}  u^{\mathcal{H}} = \frac{d}{dt}\Bigg|_{(q(0),v(0))} \Phi_{g_0,\sigma_0}^s(q(t),v(t)) = \frac{d}{dt}\Bigg|_{(q(0),v(0))} (q(t), e^{2\pi J s}v(t))
\end{align}
Since the complex structure $J$ of a K\"ahler manifold is parallel with respect 
to the K\"ahler metric, that is, $\nabla J \equiv 0$, the endomorphism 
$e^{2\pi J s}$ is also parallel. As a consequence, the following holds:
\begin{align}
    \nabla_{t}\Big(e^{2\pi J s}v(t)\Big)
    =
    \Big(\nabla_t e^{2\pi J s}\Big)v(t)
    +
    e^{2\pi J s}\nabla_t v(t)
    =
    0.
\end{align}
Therefore, the curve 
\begin{align}
    t \mapsto (q(t), e^{2\pi J s}v(t))
\end{align}
represents a horizontal lift at the point 
$\Phi_{g_0,\sigma_0}^s\vert_{q(0),v(0)}$. Hence,
\begin{align}
    d \Phi_{g_0,\sigma_0}^s\big|_{(q(0),v(0))}  u^{\mathcal{H}} 
    \in 
    \mathcal{H}\big({\Phi_{g_0,\sigma_0}^s(q(0),v(0))}\big)
\end{align}
In particular, the flow $\Phi_{g_0,\sigma_0}^s$ preserves the horizontal distribution. Therefore, with respect to the orthogonal splitting $TT^*M \cong \mathcal{H} \bigoplus \mathcal{V}$ the map $d\Phi_{g_0,\sigma_0}^s$ can be written in the following way:
\begin{align}\label{rotation_is_killing}
    d\Phi_{g_0,\sigma_0}^s = \begin{bmatrix}
    Id & 0\\
    0 & e^{2\pi J s}
\end{bmatrix}
\end{align}
It is worth noting here that in \cite[Proposition 2.5]{johanna_negative_curvature} it is observed that upon denoting by $\eta:= J^*\lambda$, the $2$-form $d\tau$ has the following expression on $T^*M$:
\begin{align}\label{dtau_decomop}
    d\tau = +2\sigma_{0}^{\mathcal{V}} - \frac{\kappa}{2}\Bigg[\lambda \wedge \eta +  2H_{g_0}\pi^*(\sigma_{0})\Bigg]
\end{align}
where $\sigma_{\text{0}}^{\mathcal{V}}$ is the vertical lift of $\sigma_{\text{0}}$ and $H_{g_0}: T^*M \rightarrow \mathbb{R}$ is the metric Hamiltonian. We note here that the formula in \eqref{dtau_decomop} differs from the one appearing in \cite{johanna_negative_curvature} by a sign due to a difference in conventions with respect to the definition of curvature used.\\
\\ 
Since the Zoll magnetic pair $(g_0,\sigma_0)$ on the Riemannian manifold $(M,g_0)$ defines a Zoll odd-symplectic form on the manifold $S_{g_0}^*M$, after a picking an $S^1$-bundle associated with this Zoll odd-symplectic form, we can define a Zoll polynomial as in Section \ref{zoll_poly_section}. We will refer to the resulting Zoll polynomial as the Zoll polynomial of the magnetic pair $(g_0,\sigma_0)$. The following proposition furnishes a connivent way to compute this Zoll polynomial in Bimmermann's examples: 
 \begin{proposition}\label{Zoll_poly_cpn_proposition}
Let $(M, J, \sigma_0)$ be a closed, connected K\"ahler manifold such that the associated K\"ahler metric $g_0$ has constant holomorphic sectional curvature $\kappa$. Then, the Zoll polynomial of the magnetic pair $(g_0,\sigma_0)$ has the following expression:
\begin{align}\label{cpn_Zoll_polynomial_final_form}
 P(\tilde A) = \begin{cases}
   n!\pi^{n-1}[a(1)]^{2n-2}
    \;\text{Vol}_{g_0}(M)\; \big(A(\tilde A)C(\tilde A)\big)^{n-1}\bigg(C(\tilde A) -\frac{B(\tilde A)}{n} \bigg) \;\;\;&\text{if $\kappa \neq 0$} 
    \\
    \\
       (-s)^n A(\tilde A)^{n-1}\; n!\pi^{n-1}[a(1)]^{2n-2}
    \;\text{Vol}_{g_0}(M)\;\;\; &\text{if $\kappa =0$}
 \end{cases}  
 \end{align}
 where $s \in \mathbb{R}$ is any constant satisfying the condition of Theorem \ref{zollness_of_mag_systems_kahler}, $a$ is the function defined in that theorem and :
 \begin{align}\label{def_of_K_tilde}
   A(\tilde A)=1+\frac{\tilde A}{\pi a^2(1)},\qquad
B(\tilde A)=\frac{\kappa\bigl(\tilde A+\pi a^2(1)\bigr)}{4\pi a^2(1)},\qquad
C(\tilde A)=\frac{\kappa\bigl(\tilde A+\pi a^2(1)\bigr)}{4\pi}-s
 \end{align}
\end{proposition}
\begin{proof}
Since the Zoll polynomial is invariant under diffeomorphisms isotopic to the identity, due to Theorem \ref{zollness_of_mag_systems_kahler}, we consider the following Zoll odd-symplectic form:
\begin{align}\label{zoll_poly_mag_os_def}
    \Omega_0 = \bigg(\frac{d\tau}{2} - s\pi^*\sigma_0 \bigg) \bigg|_{\partial(\mathbb{D}_{a(1)}M)}
\end{align}
In the remainder of this proof, the strength $s$ of the magnetic system determined by the pair $(g_0,\sigma_0)$ is fixed to be one that satisfies the condition $s^2+\kappa>0$ from Theorem \ref{zollness_of_mag_systems_kahler}.\\
\\
By Theorem \ref{zollness_of_mag_systems_kahler}, the leaves of the characteristic foliation of the above odd-symplectic form are the fibers of the following $S^1$-bundle:
\begin{align}\label{s1_in_zoll_computation_def}
    \partial(\mathbb{D}_{a(1)}M) \xrightarrow{p} \mathbb{P}(T^*M)
\end{align}
It is also clear from Theorem \ref{zollness_of_mag_systems_kahler} the fibers of the above 
$S^1$-bundle are the fibers of a liner rescaling of the vector field \eqref{angular_vf_cpn_def}. In addition, as shown in \eqref{rotation_is_killing} the flow of this vector field leaves the Sasaki metric invariant. As a result, the connection $1$-form for the $S^1$-bundle \eqref{s1_in_zoll_computation_def} is the following:
\begin{align}
    \frac{\tau}{2\pi a^2(1)}
\end{align}
Therefore, there exists a closed $2$-form $\nu \in \Omega^2(\mathbb{P}(T^*M))$ such that $p^*\nu =  \frac{d\tau}{2\pi a^2(1)}$. The cohomology class $e_0 \in H^2(\mathbb{P}(T^*M), \mathbb{R})$ represented by $\nu$ is $-1$ times the real Euler class of the bundle \eqref{s1_in_zoll_computation_def}. Moreover, there exists a symplectic form $\omega_0 \in \Omega^2(\mathbb{P}(T^*M))$ that is uniquely determined by the relation $p^*\omega_0 = \Omega_0$ and we will denote by $c_0 \in H^2(\mathbb{P}(T^*M), \mathbb{R})$ the cohomology class represented by $\omega_0$.\\
\\
By \eqref{zoll_poly_mag_os_def}, the following cohomological equation, holds:
\begin{align}\label{c_0_cpn_1}
    c_0 = \pi a^2(1) e_0-{s}\pi^*[\sigma_{0}]
\end{align}
where $\pi:\mathbb{P}(T^*M) \rightarrow M$ is the standard foot-point projection and $[\sigma_{0}] \in H^2(M,\mathbb{R})$ is the cohomology class represented by the K\"ahler form $\sigma_0$.\\
\\
The Zoll polynomial we are trying to compute is then the following integral:
\begin{align}\label{cpn_kronocker_sum_0}
    P(\tilde A) = \int^{\tilde A}_0  \langle (c_0+te_0)^{2n-1},[\mathbb{P}(T^*M)]\rangle\; dt \;\;\; \text{for some $\tilde A \in \mathbb{R}$.} 
        \end{align}
where the orientation on the manifold $\mathbb{P}(T^*M)$ is that induced by the symplectic form $\omega_0$.\\
\\
By \eqref{c_0_cpn_1}, the sum $c_0 + te_0$ takes the form: 
\begin{align}\label{cpn_kronocker_sum_1}
    (t+a^2(1)\pi)e_0 - {s}\pi^*[\sigma_{0}]
\end{align}
For each $p \in \mathbb{P}(T^*M)$ the tangent space $T_p\mathbb{P}(T^*M)$ has the following direct sum decomposition:
\begin{align}
    T_p\mathbb{P}(T^*M) \cong \mathcal{H}(p) \bigoplus \tilde{\mathcal{V}}(p)
\end{align}
where $\mathcal{H}(p) \cong T_pM \subset \partial(\mathbb{D}_{a(1)}M)$ is (the restriction of) the horizontal distribution of $T_pT^*M$ and $\tilde{\mathcal{V}}(p)$ is the quotient of (the restriction of) the vertical distribution of $T_pT^*M$ by the relevant $S^1$-action. Using this information and \eqref{dtau_decomop}, we see that the cohomological equation \eqref{cpn_kronocker_sum_1} has a representative $\Delta$ on $\mathbb{P}(T^*M)$ given by:
\begin{align}\label{cpn_kronocker_sum_2}
 \Delta(t) =  \Bigg(\frac{t}{\pi a^2(1)}+1\Bigg)\sigma^{\mathcal{V}}_{0} -\Bigg(\frac{\kappa(t+a^2(1)\pi)}{4\pi a^2(1)} \Bigg) \lambda\wedge \eta + \Bigg(\frac{\kappa(t+a^2(1)\pi)}{4\pi}-s \Bigg)\sigma^{\mathcal{H}}_0
\end{align}
where $\sigma^{\mathcal{H}}_{0}$ is the horizontal lift of $\sigma_{0}$ that is, $\sigma^{\mathcal{H}}_{0} := \pi^*\sigma_{0}$. In order to see that formula  \eqref{cpn_kronocker_sum_2} follows from \eqref{dtau_decomop} and \eqref{cpn_kronocker_sum_1}, we observe that the following holds:
\begin{align}
    H^{-1}_{g_0}\Big( \frac{a^2(1)}{2}\Big) = \mathbb{D}_{a(1)}M
\end{align}
In the sequel, we compute the integral \eqref{cpn_kronocker_sum_0}. We will do so in two separate parts: by first considering the case $\kappa \neq 0$ and then looking at the situation with $\kappa =0$.\\
\\
\textbf{\underline{Case 1: $\kappa \neq 0$}}\\
\\
We want to evaluate $\Delta^{2n-1}$ with $\kappa \neq
 0$ which has the following binomial expansion:
\begin{align}\label{Z_2n-1_cpn}
   \Delta^{2n-1} (t) = \sum^{2n-1}_{i=0} & \binom{2n-1}{i} \Big( C(t) \sigma^{\mathcal{H}}_0 - B(t)\lambda\wedge\eta  \Big)^{i}  \wedge A(t)^{2n-1-i} \big(\sigma^{\mathcal{V}}_{0}\big)^{2n-1-i}
\end{align}
where 
\begin{align}
    A(t)=1+\frac{t}{\pi a^2(1)},\qquad
B(t)=\frac{\kappa\bigl(t+\pi a^2(1)\bigr)}{4\pi a^2(1)},\qquad
C(t)=\frac{\kappa\bigl(t+\pi a^2(1)\bigr)}{4\pi}-s
\end{align}
The first two summands in  \eqref{Z_2n-1_cpn} act only on the horizontal distribution in $T(\mathbb{P}(T^*M))$ and the third only on the vertical one, therefore the only summand in \eqref{Z_2n-1_cpn} that is non-zero is the one with $i=n$ because $\mathbb{P}(T^*M)$ has exactly $2n$ horizontal and $2n-1$ vertical directions. Therefore, the $4n-2$-form \eqref{Z_2n-1_cpn} takes the form: \begin{align}\label{cpn_kronocker_sum_3}
 \Delta^{2n-1} (t) = \sum^{2n-1}_{i=0} & \binom{2n-1}{i} \Big( C(t) \sigma^{\mathcal{H}}_0 - B(t)\lambda\wedge\eta  \Big)^{n}  \wedge A(t)^{n-1} \big(\sigma^{\mathcal{V}}_{0}\big)^{2n-1-i}
\end{align}
We begin by considering only the horizontal part and make use of the following notation:
\begin{align}\label{H_def_without_power}
    \mathfrak{H}:=  C(t) \sigma^{\mathcal{H}}_0 - B(t)\lambda\wedge\eta  
\end{align}
Therefore the $2n$-form $\mathfrak{H}^n$ can be written as:
\begin{align}
    \mathfrak{H}^n = \sum^n_{i=0} (-1)^{i}\; C(t)^{n-i}\;B(t)^{i}\;(\lambda\wedge\eta)^i \wedge \Big(\sigma^{\mathcal{H}}_0\Big)^{n-i}
\end{align}
Since $\lambda \wedge \lambda =0$, only the terms $i=0,1$ survive in the above expression. Therefore, we obtain:
\begin{align}\label{CPn_zoll_poly_Hn_def}
     \mathfrak{H}^n = C(t)^n &\big(\sigma^{\mathcal{H}}_0\big)^{n} - C(t)^{n-1}\;B(t)\;(\lambda\wedge\eta) \wedge \big(\sigma^{\mathcal{H}}_0\big)^{n-1}  \\\nonumber = 
     &C(t)^{n-1} \big(\sigma^{\mathcal{H}}_0\big)^{n} \bigg(C(t) -\frac{B(t)}{n} \bigg)
\end{align}
where the final equality is a consequence of the following fact: given some point $(q,p) \in \mathbb{P}(T^* M)$ and a unitary basis $\{v_i, Jv_i\}^{n}_{i=1}$ for $H(p) \cong T_qM$ with the property that $v_1$ is the metric dual to $p$, the following is true: 
\begin{align}
    \big(\lambda \wedge \eta \wedge (\sigma^{\mathcal{H}}_{0})^{n-1}\big)|_{(q,p)}(&v_1,Jv_1,\cdots,v_n,Jv_n) = \frac{(n-1)!}{n!}(\sigma^{\mathcal{H}}_{0})|_{(q,p)}^n(v_1,Jv_1,\cdots,v_n,Jv_n) \\\nonumber = &\frac{1}{n}(\sigma^{\mathcal{H}}_{0})|_{(q,p)}^n(v_1,Jv_1,\cdots,v_n,Jv_n)
\end{align}
the first equality above is a consequence of the fact that given the unitary basis $\{u_i\}^{2n}_{i=0}$ of $ T\mathbb{P}(T^* M)$, $[\lambda \wedge \eta](u_i,u_j) \neq 0$ if and only if $\{u_i,u_j\} = \{u_1, Ju_1\}$.\\
\\
Using the Fubini-Tonelli theorem, the Kronocker pairing $\langle (c_0+te_0)^{2n-1},[\mathbb{P}(T^*M)]\rangle$ in \eqref{cpn_kronocker_sum_0} takes the form:
\begin{align}\label{cpn_second_last_int}
  \big(A(t)C(t)\big)^{n-1}\bigg(C(t) -\frac{B(t)}{n} \bigg)  \int_{\mathbb{P}(T^*M)} (\sigma^{\mathcal{H}}_{0}\big)^n \wedge (\sigma^{\mathcal{V}}_{0})^{n-1}
\end{align}

Upon integration, \eqref{cpn_second_last_int} then becomes:
\begin{align}\label{zoll_poly_computation_final_step}
  \langle (c_0+te_0)^{2n-1},[\mathbb{P}(T^*M)]\rangle = n!\pi^{n-1}[a(1)]^{2n-2}
    \;\text{Vol}_{g_0}(M)\; \big(A(t)C(t)\big)^{n-1}\bigg(C(t) -\frac{B(t)}{n} \bigg) 
\end{align}
From which we then obtain the following expression for the Zoll polynomial $P(\tilde A)$:
\begin{align}
    P(\tilde A)= n!\pi^{n-1}[a(1)]^{2n-2}
    \;\text{Vol}_{g_0}(M)\; \big(A(\tilde A)C(\tilde A)\big)^{n-1}\bigg(C(\tilde A) -\frac{B(\tilde A)}{n} \bigg) 
\end{align}
\textbf{\underline{Case 2: $\kappa =0$}}\\
\\
Taking $\kappa =0$ in \eqref{cpn_kronocker_sum_2} we obtain:
\begin{align}
    \Delta(t) =  A(t)\sigma^{\mathcal{V}}_{0} -s \sigma^{\mathcal{H}}_0
\end{align}
Therefore, $\Delta^{2n-1}(t)$ has the following expression:
\begin{align}
    \Delta^{2n-1}(t) = (-s)^n A(t)^{n-1}\big(\sigma^{\mathcal{V}}_{0}\big)^{n-1} \wedge \big(\sigma^{\mathcal{H}}_{0}\big)^{n}
\end{align}
From which we obtain the following: 
\begin{align}
    P(\tilde A)
 =  (-s)^n A(\tilde A)^{n-1}\; n!\pi^{n-1}[a(1)]^{2n-2}
    \;\text{Vol}_{g_0}(M)
\end{align}
\end{proof}
Recall that the magnetic length of a magnetic geodesic $\gamma_0$ of the magnetic system determined by the Zoll pair $(g_0,\sigma_0)$ was defined in \eqref{mag_length_functional_def} to be the following:
\begin{align}\label{mag_length_kahler_def_Zoll}
    l^{g_0,\sigma_0}_{\text{mag}}(\gamma_{0}) = \text{length}_{g_0}(\gamma_{0}) - \int_{\mathbb{D}_{\gamma_{0}} } \sigma_{0} 
\end{align}
We now describe in more detail how the capping surface $\mathbb{D}_{\gamma_0}$ is obtained. We begin by observing that the magnetic geodesic flow of the Zoll magnetic pair $(g_0,\sigma_0)$ is contractible in the disk bundle $\mathbb{D}_1M$ by Theorem \ref{zollness_of_mag_systems_kahler}. And since the magnetic geodesics of the Zoll pair $(g_0,\sigma_0)$ are precisely the curves obtained by projecting to $M$ these contractible trajectories in $\mathbb{D}_1M$, the magnetic geodesics of the Zoll pair $(g_0,\sigma_0)$ are also null-homotopic in $M$. We now fix a universal covering $\tilde p: \tilde M \rightarrow M$ and pick a magnetic geodesic $\gamma_0 \in \chi(g_0,\sigma_0)$. Let $\tilde x \in \tilde M$ be a point such that $\tilde p(\tilde x) =\gamma_0(0)$. The magnetic geodesic $\gamma_0$ then has a unique lift $\tilde \gamma_0 \subset \tilde M$ such that $\tilde \gamma_0(0)=\tilde x$ and since the magnetic geodesic $\gamma_0$ is contractible, the curve $\tilde \gamma_0$ is closed. In addition, the homology class $[\tilde \gamma_0] \in H_1(\tilde M,\mathbb{Z})$ represented by the curve $\tilde \gamma_0$ is the trivial or zero class. This is because the universal cover $\tilde M$ is simply connected and by the Hurewicz theorem, the following holds:
\begin{align}
     H_1(\tilde M,\mathbb{Z}) \cong \pi_1(\tilde M)_{\text{ab}} \cong \{e\}
\end{align}
Therefore, by \cite[Proposition 3.4.8]{Geiges_book}, the curve $\tilde \gamma_0$ admits a capping disk $\mathbb{D}_{\tilde \gamma_0}$. The capping surface $\mathbb{D}_{\gamma_0}$ in \eqref{mag_length_kahler_def_Zoll} is then obtained by just projecting the disk $\mathbb{D}_{\tilde \gamma_0}$ to $M$. If $\kappa \leq 0$ then, $\pi_2(\tilde M) = \{e\}$ as a result, the following integral is independent of the choice of capping disk $\mathbb{D}_{\tilde \gamma_0}$
\begin{align}
    \int_{\mathbb{D}_{\tilde \gamma_0}} \tilde p^*\sigma_0
\end{align}
However if $\kappa >0$ then $\pi_2(\tilde M) \cong \mathbb{Z}$. So, in this case, we need to specify which disk we choose as a capping disk for the the lifted curve $\tilde \gamma_0$, the disk we wish to use is obtained in the following way: upon lifting a given magnetic geodesic $\gamma_0$ to the universal cover $\Tilde{M} \cong \mathbb{CP}^n$ of $M$ in the way specified above, as observed in \cite[Pg. 27]{johanna_projective}, there exists an immersion $\mathbb{CP}^1 \hookrightarrow \mathbb{CP}^n$ with the property that this projectivised complex line completely contains the lift of the chosen magnetic geodesic $\gamma_0$. In addition to this, there is a well-defined normal direction to the lifted magnetic geodesic $\tilde \gamma_0$ determined by the orientation on $\mathbb{CP}^1$ induced by the lift of $\sigma_0$. We now define the capping disk $\mathbb{D}_{\tilde \gamma_0}$ of the lifted curve $\tilde \gamma_0$ to be the one for which this normal vector points inward.\\
\\
The following proposition furnishes a convenient expression for the magnetic length considered in \eqref{mag_length_kahler_def_Zoll}.
\begin{proposition}\label{action_of_fubini_study_mag_system}
Let $(M,\sigma_0,J)$ be a closed and connected K\"ahler manifold of constant holomorphic sectional curvature $\kappa$ with associated K\"ahler metric $g_0$. Then, given any magnetic geodesic $\gamma_0$ of the Zoll magnetic pair $(g_0,\sigma_0)$, its magnetic length $l^{g_0,\sigma_0}_{\text{mag}}(\gamma_0)$ has the following expression:
\begin{align}
   l^{g_0,\sigma_0}_{\text{mag}}(\gamma_{0}) =  \pi a(1)^2
    \end{align}
    where $a$ is the function defined in
\eqref{def_of_a_mag_zoll}. 
\end{proposition}
\begin{proof}
\textbf{\underline{Case 1: $\kappa >0$}:}\\
\\
As observed earlier, we know from \cite[Pg. 27]{johanna_projective} that given a magnetic geodesic $\gamma_{0} \in \chi(g_{0}, \sigma_{0})$, there exists an immersion $\mathbb{CP}^1 \hookrightarrow M$ with the property that $\gamma_{0} \subset \mathbb{CP}^1$. Therefore, for any given magnetic geodesic $\gamma_{0}$, we can carry out the computations in the proof completely within such an embedded $\mathbb{CP}^1$.\\
\\
From \eqref{mag_length_functional_def}, the magnetic length of $\gamma_0$ is the following:
\begin{align}
    l^{g_0,\sigma_0}_{\text{mag}}(\gamma_{0}) = \text{length}_{g_0}(\gamma_{0}) - \int_{\mathbb{D}_{\gamma_{0}} } s\sigma_{0} 
\end{align}
The constant $s$ in the above equation is omitted in \eqref{mag_length_functional_def} and \eqref{mag_length_kahler_def_Zoll}, to make for a better presentation of the formula, since the strength of the magnetic system being considered there is clear. However, we reintroduce it here since it will play a crucial role in the calculations that follow.\\
\\
In \cite[Pg. 27]{johanna_projective}, the following equality was established:
\begin{align}\label{paradigm_1_lenght_FS}
\text{length}_{g_0}(\gamma_{0}) = \frac{2\pi}{\sqrt{s^2+ \kappa}}
\end{align}
To compute the integral $\int_{\mathbb{D}_{\gamma_0}} s\sigma_{\text{0}}$, we fix the universal cover $\tilde p: \mathbb{CP}^n \rightarrow M$ of $M$ and observe that the following holds:
\begin{align}
    \int_{\mathbb{D}_{\gamma_0}} s\sigma_{\text{0}} =\int_{\mathbb{D}_{\tilde \gamma_0}} s\tilde p^*\sigma_{\text{0}}
\end{align}
So, we loose no information by working on the universal cover. We will use polar coordinates $(\theta, \phi)$ on $\mathbb{CP}^1 \cong S^2 \hookrightarrow \mathbb{R}^3$. These polar coordinates can be described as follows: $ \theta \in (0, \pi)$ is the angle a vector in the standard $\mathbb{R}^3$ makes with the vertical $\text{z}$-axis and $\phi \in \mathbb{R} \backslash 2\pi \mathbb{Z}$ is the angular coordinate of the projection of the same vector to the $(x,y)$-plane. In these coordinates, the magnetic geodesic $\gamma_0$ has the following expression:
\begin{align}\label{mag_geodesic_cpn_eqn}
    \gamma_0(\theta) =\frac{s}{\kappa} \sin(\sqrt{\kappa}\theta) 
\end{align}
The K\"ahler form $\sigma_0$ has the following expression:
\begin{align}\label{kahler_form_pos_curvature}
    \sigma_0 = \frac{1}{\kappa}\sin{(\sqrt{\kappa}\theta)} d\theta \wedge d\phi
\end{align}
Therefore, the desired integral has the following expression:
\begin{align}\label{paradigm_1_action_2}
    s\int_{\mathbb{D}_{\gamma_0}} \sigma_0 =  \frac{s}{\kappa}\int^{2\pi}_0 \Big(\int^{\theta}_0  \sin (\sqrt{\kappa}t) \; dt \Big) d\phi = \frac{2\pi s}{\kappa}[1-\cos (\sqrt{\kappa}\theta)]
\end{align}
We know from the proof of \cite[Lemma 5.4]{johanna_projective} that the magnetic geodesic $\gamma_0 \in \chi(g_0,\sigma_0)$ has constant geodesic curvature equal to the strength $s$. In addition, it was proven in \cite[(10)]{johanna_projective} that the geodesic curvature $\kappa_{g_0}$ satisfies the following:
\begin{align}
    \kappa_{g_0} = \frac{\sqrt{\kappa}}{\tan (\sqrt{\kappa}\theta)}
\end{align}
Combining these two facts, we obtain the following:
\begin{align}\label{kahler_action_zoll_tan}
    \tan (\sqrt{\kappa}\theta) = \frac{\sqrt{\kappa}}{s}
\end{align}
combining \eqref{kahler_action_zoll_tan} with the identity $\cos(\sqrt{\kappa}\theta) = \frac{1}{\sqrt{1+\tan^2(\sqrt{\kappa}\theta)}}$ we see that we can rewrite \eqref{paradigm_1_action_2} in the following form:
\begin{align}\label{paradigm_1action_3}
s\int_{\mathbb{D}_{\gamma_0}} \sigma_{\text{0}} = \frac{2\pi}{\kappa}\Bigg[s -\frac{s^2}{\sqrt{s^2+\kappa}}\Bigg]
\end{align}  
Subtracting \eqref{paradigm_1action_3} from \eqref{paradigm_1_lenght_FS}, we obtain the following expression for $l^{g_0,\sigma_0}_{\text{mag}}(\gamma_{0})$:
\begin{align}\label{paradigm_1action_4}
    2\pi\Bigg[ \frac{1}{\sqrt{s^{2}+\kappa}} - \frac{s}{\kappa} + \frac{s^2}{\kappa\sqrt{s^{2}+\kappa}} \Bigg] = \pi \Bigg[ \frac{2}{\pi}\Big( \sqrt{\kappa +s^2} -s \Big)\Bigg] = \pi a^2(1)
\end{align}
The final equality in \eqref{paradigm_1action_4} above follows directly from the definition of the function $a$ in \eqref{def_of_a_mag_zoll}.\\
\\
\textbf{\underline{Case 2: $\kappa <0$}}\\
\\
By \cite{johanna_negative_curvature}, we know that the magnetic geodesic $\gamma_0 \in \chi(g_0,\sigma_0)$ is completely contained in an immersed surface $\Sigma_{g_0} \hookrightarrow M$ such that $\Sigma_{g_0}$ has constant Gaussian curvature equal to $\kappa$. As a consequence, we can carry out the following computation of the magnetic action evaluated at the magnetic geodesic $\gamma_0$ inside this surface $\Sigma_{g_0}$.\\
\\
The following formula was established in \cite{johanna_negative_curvature}:
\begin{align}\label{length_hyperbolic}
    \text{length}_{g_0}(\gamma_{0}) = \frac{2\pi}{\sqrt{s^2+ \kappa }}
\end{align}
To compute the integral $\int_{\mathbb{D}_{\gamma_0}} s\sigma_0$ we begin by fixing the universal cover $\tilde p: \mathbb{CH}^n \rightarrow M$ and pick a point $\tilde x \in \mathbb{CH}^n$ such that $\tilde p(\tilde x) = \gamma_0(0)$. The curve $\gamma_0$ then admits a unique lift $\tilde \gamma_0 \subset \mathbb{CH}^n$ such that $\tilde \gamma_0(0)=\tilde x$. From \cite{johanna_negative_curvature}, we know that $\mathbb{CH}^n$ is foliated by fully geodesic hyperbolic disks that is to say, there exists an embedding of a Poincar\'e disk  $\mathbb{D}_{\tilde \gamma_0} \hookrightarrow \mathbb{CH}^n$ of Gaussian curvature $\kappa$ with the property that $\mathbb{D}_{\tilde \gamma_0}$ is a capping disk for $\tilde \gamma_0$ and the surface $\Sigma_{g_0}$ mentioned in the previous paragraph is the projection of $\mathbb{D}_{\tilde \gamma_0}$ to $M$. In addition, the following holds:
\begin{align}
    \int_{\mathbb{D}_{\gamma_0}} s\sigma_{\text{0}} =\int_{\mathbb{D}_{\tilde \gamma_0}} s\tilde p^*\sigma_{\text{0}}
\end{align}
So, we once again loose no information by working on the universal cover $\mathbb{CH}^n$; we now compute the required integral on the Poincar\'e disk $\mathbb{D}_{\tilde \gamma_0} \subset \mathbb{CH}^n$. In the $(\theta, \phi)$ coordinates used in the case with $\kappa>0$ above, the magnetic geodesic $\gamma_0$ has the following expression:
\begin{align}\label{mag_geodesic_hyp_eqn}
    \gamma_0(\theta) =\frac{s}{\sqrt{-\kappa}} \sinh(\sqrt{-\kappa}\theta) 
\end{align}
And the metric induced on $\mathbb{D}_{\tilde \gamma_0}$ by the lift of $g_0$ to $\mathbb{CH}^n$ has the following expression:
\begin{align}\label{hyperboic_metric_equation}
    d\theta^2 + \frac{1}{-\kappa} \sinh^2(\sqrt{-\kappa}\theta) \; d\phi^2 =d\theta^2 + \big(\gamma_0(\theta)\big)^2 \; d\phi^2
\end{align}
The lift $\tilde p^* \sigma_0$ of the K\"ahler form $\sigma_0$ has the following expression:
\begin{align}\label{kahler_form_hyp_eqn}
    \tilde p^* \sigma_0 = \frac{1}{-\kappa} \sinh{(\sqrt{-\kappa}\theta)} \; d\theta \wedge d\phi
\end{align}
Using the above information, we obtain the following expression for the required integral: 
\begin{align}\label{disk_int_hyp}
   s\int_{\mathbb{D}_{\gamma_0}} \tilde p^*\sigma_0  = \frac{s}{-\kappa}\int^{2\pi}_0 \Big( \int^{\theta}_0 \sinh(\sqrt{-\kappa}t)\; dt \Big)d\phi \; = \; \frac{2s\pi}{-\kappa}[\cosh{\sqrt{-\kappa}\theta}-1] 
\end{align}
Using \eqref{mag_geodesic_hyp_eqn} and \eqref{hyperboic_metric_equation} we obtain the following formula for the geodesic curvature $\kappa_{g_0}$:
\begin{align}\label{hyp_geodesic_curvature}
    \kappa_{g_0} = \frac{\frac{d \gamma_0}{dt}}{\gamma_0} = \frac{\sqrt{-\kappa} \;\cosh(\sqrt{-\kappa}\theta)}{\sinh(\sqrt{-\kappa} \theta)} = \frac{\sqrt{-\kappa}}{\tanh{(\sqrt{-\kappa} \theta)}}
\end{align}
Combining \eqref{hyp_geodesic_curvature} with the equation $\kappa_{g_0} = s$ obtained from \cite[(10)]{johanna_projective} we obtain the following:
\begin{align}\label{tanh_final}
    \tanh{(-\kappa \theta)} = \frac{\sqrt{-\kappa} }{s}
\end{align}
Plugging \eqref{tanh_final} into  the identity $\cosh x= \frac{1}{\sqrt{1-\tanh^2x}}$ we obtain the following expression for  \eqref{disk_int_hyp}:
\begin{align}\label{disk_int_hyp_final}
s\int_{\mathbb{D}_{\gamma_0}} \tilde p^*\sigma_0 =  \frac{2\pi}{-\kappa}  \Bigg[s- \frac{s^2}{\sqrt{s^2+\kappa}}  \Bigg]
\end{align}
Subtracting \eqref{disk_int_hyp_final} from \eqref{length_hyperbolic} we obtain the following:
\begin{align}
    2\pi\Bigg[ \frac{1}{\sqrt{s^2+\kappa}} + \frac{s}{\kappa} - \frac{s^2}{\kappa\sqrt{s^2+\kappa}} \Bigg] = \pi \Bigg[ \frac{2}{-\kappa}\Big( \sqrt{\kappa +s^2} -s \Big)\Bigg] = \pi a^2(1)
\end{align}
The final equality above follows from the definition of the function $a$ in \eqref{def_of_a_mag_zoll} above.
\\
\\
\\
\textbf{\underline{Case 3: $\kappa =0$}}\\
\\
We begin with the observation that the right-hand side of the equation \eqref{length_hyperbolic} is continuous in the variable $\kappa$ and its Taylor expansion about $\kappa=0$ with $s,\kappa > 0$ is the following:
\begin{align}
 \frac{2\pi}{\sqrt{s^2+ \kappa }} =   \frac{2\pi}{s} \left[
1 - \frac{1}{2} \frac{\kappa}{s^2}
+ \frac{3}{8} \bigg( \frac{\kappa}{s^2} \bigg)^2
- \frac{5}{16} \bigg( \frac{\kappa}{s^2} \bigg)^3
+ \cdots
\right]
\end{align}
Evaluating the above expression at $\kappa=0$, we see that the length of a magnetic geodesic $\gamma_0 \in \chi(g_0,\sigma_0)$ has the following expression:
\begin{align}\label{torus_length}
    \text{length}_{g_0}(\gamma_0) = \frac{2\pi}{s}
\end{align}
Similarly, the right hand side of \eqref{disk_int_hyp_final} is a continuous function of $\kappa$ and it's Taylor expansion about $\kappa =0$ with $s,\kappa > 0$ has the following expression:
\begin{align}
 \frac{2\pi}{\kappa}  \Bigg[s - \frac{s^2}{\sqrt{s^2+\kappa}} \Bigg] =     \frac{\pi}{s} - \frac{3\pi}{4s^3} \kappa + \frac{5\pi}{8s^5} \kappa^2 - \cdots
\end{align}
Evaluating the above expression at $\kappa=0$ we see that, given any capping surface $\mathbb{D}_{\gamma_0}$ of the magnetic geodesic $\gamma_0$ we obtain the following expression for the integral of $\sigma_0$ over this surface:
\begin{align}\label{torus_disk_int}
    \int_{\mathbb{D}_{\gamma_0}} s\sigma_0 = \frac{\pi}{s}
\end{align}
Subtracting \eqref{torus_disk_int} from \eqref{torus_length} we obtain:
\begin{align}
    \frac{2\pi}{s} - \frac{\pi}{s} = \frac{\pi}{s} = \pi a^2(1)
\end{align}
\end{proof}
We will denote by $B_0 \in H^2(M,\mathbb{R})$ the cohomology class represented by the K\"ahler form $\sigma_0 \in \Omega^2(M)$ and we now pick an arbitrary pair $(g,\sigma) \in \mathcal{M}(M)\times \Xi^2_{B_0}(M)$. Recall that after fixing the Zoll magnetic pair $(g_0,\sigma_0)$ on the Riemannian manifold $(M,g_0)$, the set $\chi(g,\sigma)$ was introduced earlier to denote the set of closed magnetic geodesics of the magnetic pair $(g,\sigma)$ that are $C^2$-close to those of the Zoll pair $(g_0,\sigma_0)$.\\
\\
As explained in Section \ref{mag_dyn_riem_section}, the two magnetic pairs $(g_0,\sigma_0)$ and $(g,\sigma)$ define odd-symplectic forms on the manifold $(S^*_{g_0}M)$. These odd-sympelctic forms are the following:
\begin{align}
  \Omega_{g_0,\sigma_0}= \big(d\lambda-\pi^*\sigma_0\big)\vert_{S^*_{g_0}M} 
\end{align}
And,
\begin{align}
    \Omega_{g,\sigma}:= \big(d(f_g\lambda)-\pi^*\sigma\big)\vert_{S^*_{g_0}M} 
\end{align}
where $f_g$ is the function defined in \eqref{def_of_the_function_f}.\\
\\
Since the first summand in the definition of $\Omega_{g,\sigma}$ and $\Omega_{g_0,\sigma_0}$ is an exact $2$-form and $\sigma$ was chosen from the set $\Xi^2_{B_0}(M)$, we see that the odd-symplectic forms $\Omega_{g,\sigma}$ and $\Omega_{g_0,\sigma_0}$ and cohomologous. We can therefore define their magnetic action as in \eqref{mag_action_def_eqn}. The following proposition furnishes a connivent expression for the magnetic action $\mathcal{A}^{\text{mag}}_{{g,\sigma}}$ in terms of the magnetic length functional defined in \eqref{mag_length_functional_def}.
\begin{proposition}\label{cpn_action_paradigm_2}
Let $(M,g_0,\sigma_0)$ be a closed, connected K\"ahler manifold such that the associated K\"ahler metric $g_0$ has constant holomorphic sectional curvature $\kappa$ and denote by $B_0 \in H^2(M,\mathbb{R})$ the cohomology class represented by $\sigma_0$. Given a pair $(g,\sigma) \in \mathcal{M}(M) \times \Xi^2_{B_0}(M)$ such that the set $\chi(g,\sigma)$ is non-empty, its magnetic action $\mathcal{A}^{\text{mag}}_{{g,\sigma}}$ has the following expression:
\begin{align}
\mathcal{A}_{\text{mag}}^{{g,\sigma}}(\gamma) = l^{g, \sigma}_{\text{mag}}(\gamma) - \pi a^2(1) \qquad \text{for $\gamma \in \chi(g,\sigma)$}
\end{align}
where $a$ is the function defined in \eqref{def_of_a_mag_zoll}.
 \end{proposition}
\begin{proof}
Since $\sigma$ and $\sigma_{0}$ are cohomologous, there exists an $\eta \in \Omega^1(M)$ with the property that $\sigma = \sigma_{0} + d\eta$. The following is a direct consequence of this observation:
\begin{align}
    \Omega_{g,\sigma} = \Omega_{g_0,\sigma_0} + d\big((f_g-1)\lambda - s\pi^*\eta\big)
\end{align}
Since the strength $s$ is fixed, we will drop it from the notation in the sequel.\\
\\
Before we proceed further, we observe that a magnetic geodesic $\gamma \in \chi(\tilde g, \tilde\sigma)$ has a lift to the manifold $(S^*_{g_0}M)$ such that this lifted curve is a leaf of the characteristic foliation of the odd-symplectic form $\Omega_{\tilde g,\tilde \sigma}$. We will denote one such lift by $\tilde \gamma$. Since $\gamma$ is assumed to be $C^2$-close to a magnetic geodesic $\gamma_0$ of the Zoll magnetic pair $(g_0,\sigma_0)$, the curve $\tilde \gamma$ is contained in the same good chart on $S^*_{g_0}M$ as the lift $(\gamma_0,\dot \gamma_0)$; we will denote by $\Gamma_{\tilde \gamma}$ a short homotopy between these two lifts. In this setting, by \eqref{action_functional_eq}, \eqref{mag_action_def_eqn}, the magnetic action has the following expression:
\begin{align}\label{paradigm_2_1}
  \mathcal{A}_{\text{mag}}^{{\tilde g,\tilde \sigma}}(\gamma)   = \int_{\gamma_{0}}\big( (f_{\tilde g}-1)\lambda - \pi^*\eta\big) + \int_{\Gamma_{\tilde \gamma}} \Omega_{g_0,\sigma_0} + d((f_{\tilde g}-1)\lambda - \pi^*\eta) 
\end{align}
The integral \eqref{paradigm_2_1} then splits into the following two parts:
\begin{align}
 \text{I}: =   \int_{\gamma_{0}} \big((f_{\tilde g}-1)\lambda - \pi^*\eta\big) + \int_{\Gamma_{\tilde\gamma}} d((f_{\tilde g}-1)\lambda - \pi^*\eta) 
\end{align}
\begin{align}
    \text{II}:=  \int_{\Gamma_{\tilde \gamma}} d\lambda - \pi^*\sigma_{0}
\end{align}
By Stokes' theorem, the integral $\text{I}$ has the form:
\begin{align}
  \text{I} = \int_{\tilde \gamma} (f_{\tilde g}-1)\lambda - \int_{\mathbb{D}_{\gamma}} d\eta =  \text{length}_{\tilde g}(\gamma) - \text{length}_{g_0}(\gamma) - \int_{\mathbb{D}_{\gamma}} d\eta \\\nonumber 
\end{align}
where $\mathbb{D}_\gamma$ is a capping surface for $\gamma$ obtained by concatenating to the capping surface $\mathbb{D}_{\gamma_0}$ the projection of $\Gamma_{\tilde \gamma}$ to $M$ via the restriction of the foot-point projection.\\
\\
Using Stokes' theorem once again, we see that the integral $\text{II}$ has the following form:
\begin{align}
  &\text{II}=  \int_{\tilde \gamma}  \lambda - \int_{\dot \gamma_{0}}  \lambda - \int_{\Gamma_{\tilde \gamma}}  \pi^*\sigma_{0}\\\nonumber = 
 \text{length}_{g_0}&(\gamma) -  \text{length}_{g_0}(\gamma_{0}) - \int_{\mathbb{D}_{\gamma}} \sigma_{0} + \int_{\mathbb{D}_{\gamma_{0}}} \sigma_{0}  
\end{align}
Since $\sigma = \sigma_{0} + d\eta$, the integral $\text{I} + \text{II}$ can now be written in the form:
\begin{align}\label{action_para_2_1}
 \text{length}_{\tilde g}&(\gamma) - \int_{\mathbb{D}_{\gamma}} \sigma -  \text{length}_{g_0}(\gamma_{0}) + \int_{\mathbb{D}_{\gamma_0}} \sigma_{0} \\\nonumber
= l^{\tilde g,\tilde \sigma}_{\text{mag}}&(\gamma) - \text{length}_{g_0}(\gamma_{0}) + \int_{\mathbb{D}_{\gamma_{0}}} \sigma_{0}  
\end{align}
 In Proposition \ref{action_of_fubini_study_mag_system}, the following identity was established:
\begin{align}\label{action_para_2_2}
 \text{length}_{g_0}(\gamma_{0}) + s_0\int_{\mathbb{D}_{\gamma_{0}}} \sigma_{0} = \pi a^2(1)
\end{align}
Plugging \eqref{action_para_2_2} into \eqref{action_para_2_1}, we obtain the required expression for the action. 
\end{proof}
\subsection{Proof of Theorem \ref{mag_sys_ineq}}\label{mag_sys_ineq_proof}
Let $(M,\sigma_0,J)$ be a closed connected K\"ahler manifold such that the associated K\"ahler metric $g_0$ has constant holomorphic sectional curvature $\kappa$. By Theorem \ref{zollness_of_mag_systems_kahler}, the following is a Zoll odd-symplectic form when $s^2+\kappa>0$:
\begin{align}\label{kahler_sys_ineq_proof_os_1_def}
    \Omega_{g_0,\sigma_0}:= \big(d\lambda -s\pi^*\sigma_0\big)\big\vert_{S^*_{g_0}M}
\end{align}
In the sequel, we will fix one $s\in\mathbb{R}$ satisfying the condition $s^2+\kappa>0$ and drop it from the notation.\\
\\
Let $\tilde B_0 \in H^2(S_{g_0}^*M,\mathbb{R})$ be the cohomology class represented by the odd-symplectic form \eqref{kahler_sys_ineq_proof_os_1_def}. Then, by Theorem \ref{systolic_ineq}, there exists a $C^2$-neighborhood $\mathcal{U}\subset \Xi^2_{\tilde B_0}(S_{g_0}^*M)$ with the property that for any odd-symplectic form $\Omega \in \mathcal{U}$, a local systolic inequality of the form \eqref{systolic_ineq_eqn} holds.\\
\\
We now denote by $B_0 \in H^2(M,\mathbb{R})$ the cohomology class represented by the K\"ahler form $\sigma_0$, and pick a pair $(\tilde g,\tilde \sigma) \in \mathcal{M}(M) \times \Xi^2_{B_0}(M)$. As discussed in Section \ref{mag_dyn_riem_section}, the pair $(\tilde g,\tilde \sigma)$ defines the following odd-symplectic form on the manifold $S_{g_0}^*M$:
\begin{align}
    \tilde \Omega_{\tilde g,\tilde \sigma}:= \big(d(f_{\tilde g}\lambda) -\pi^*\tilde \sigma\big)\big\vert_{S^*_{g_0}M}
\end{align}
In Section \ref{mag_sys_ineq_general_proof}, it is shown that there exists a $C^3$-neighborhood $\tilde{\mathcal{U}} \subset \mathcal{M}(M) \times \Xi^2_{B_0}(M)$ of the pair $(g_0,\sigma_0)$ such that the odd-symplectic form defined by any pair $(\tilde g,\tilde \sigma) \in \tilde{\mathcal{U}}$ is contained in the neighborhood $\mathcal{U} \subset \Xi^2_{\tilde B_0}(S^*_{g_0}M)$. We now fix one such pair $(\tilde g,\tilde \sigma) \in \tilde{\mathcal{U}}$ and denote by $ \tilde \Omega_{\tilde g,\tilde \sigma}$ the odd-symplectic form it defines on the manifold $(S_{g_0}^*M)$. By Theorem \ref{systolic_ineq}, the following holds:
\begin{align}\label{mag_final_ineq_1}
    P\big(\inf_{\gamma \in \chi(\tilde g,\tilde \sigma)}\mathcal{A}^{\text{mag}}_{\tilde g,\tilde \sigma}(\gamma)\big) \leq \text{Vol}_{\Omega_{g_0,\sigma_0}}(\tilde \Omega_{\tilde g,\tilde \sigma})
\end{align}
with equality holding if and only if the magnetic system determined by the pair $(\tilde g,\tilde \sigma)$ is Zoll at strength $s$; where $\mathcal{A}^{\text{mag}}_{\tilde g,\tilde \sigma}$ is the magnetic action defined in \eqref{mag_action_def_eqn}. By Proposition \ref{cpn_action_paradigm_2}, the following holds:
\begin{align}\label{kahler_proof_action_final}
    \inf_{\gamma \in \chi(\tilde g,\tilde \sigma)}\mathcal{A}^{\text{mag}}_{\tilde g,\tilde \sigma}(\gamma) = \inf_{\gamma \in \chi(\tilde g,\tilde \sigma)} l_{\text{mag}}^{\tilde g,\tilde \sigma}(\gamma) - a^2(1)\pi  =l_{\text{min}}^{\tilde g,\tilde \sigma} - a^2(1)\pi
\end{align}
where $l_{\text{mag}}^{\tilde g,\tilde \sigma}$ is the magnetic length functional defined in \eqref{mag_length_functional_def}.\\
\\
By Proposition \ref{Zoll_poly_cpn_proposition}, the Zoll polynomial of the magnetic pair $(g_0,\sigma_0)$ has the following expression:
\begin{align}\label{zoll_poly_final_expression}
 P(\tilde A) = \begin{cases}
  n!\pi^{n-1}[a(1)]^{2n-2}
    \;\text{Vol}_{g_0}(M)\; \big(A(\tilde A)C(\tilde A)\big)^{n-1}\bigg(C(\tilde A) -\frac{B(\tilde A)}{n} \bigg) \;\;\;&\text{if $\kappa \neq 0$} 
    \\
    \\
       (-s)^n A(\tilde A)^{n-1}\; n!\pi^{n-1}[a(1)]^{2n-2}
    \;\text{Vol}_{g_0}(M)\;\;\; &\text{if $\kappa =0$}
 \end{cases}  
 \end{align}
 where $s \in \mathbb{R}$ is any constant satisfying the condition of Theorem \ref{zollness_of_mag_systems_kahler}, $a$ is the function defined in that theorem and :
 \begin{align}\label{def_of_K_tilde_final_proof}
   A(\tilde A)=1+\frac{\tilde A}{\pi a^2(1)},\qquad
B(\tilde A)=\frac{\kappa\bigl(\tilde A+\pi a^2(1)\bigr)}{4\pi a^2(1)},\qquad
C(\tilde A)=\frac{\kappa\bigl(\tilde A+\pi a^2(1)\bigr)}{4\pi}-s
 \end{align}
 We now abbreviate $\inf_{\gamma \in \chi(\tilde g,\tilde \sigma)}\mathcal{A}^{\text{mag}}_{\tilde g,\tilde \sigma}(\gamma)$ as $\inf \mathcal{A}^{\text{mag}}_{\tilde g,\tilde \sigma}$ and plug \eqref{kahler_proof_action_final} into \eqref{def_of_K_tilde_final_proof} to obtain the following:
 \begin{align}\label{ABC_final_proof}
     A(\inf \mathcal{A}^{\text{mag}}_{\tilde g,\tilde \sigma}) = \frac{l_{\text{min}}^{\tilde g,\tilde \sigma}}{\pi a^2(1)}, \qquad B(\inf \mathcal{A}^{\text{mag}}_{\tilde g,\tilde \sigma})= \frac{\kappa\big(l_{\text{min}}^{\tilde g,\tilde \sigma}\bigr)}{4\pi a^2(1)},\qquad C(\inf \mathcal{A}^{\text{mag}}_{\tilde g,\tilde \sigma})= \frac{\kappa\big(l_{\text{min}}^{\tilde g,\tilde \sigma}\bigr)}{4\pi} -s
 \end{align}
Plugging \eqref{ABC_final_proof} into \eqref{zoll_poly_final_expression}, we obtain the following expression for $P(\inf \mathcal{A}^{\text{mag}}_{\tilde g,\tilde \sigma})$ when $\kappa \neq 0$:
\begin{align}\label{zoll_poly_final_1}
    \frac{n!}{4^n\pi^n a^2(1)}
    \Bigg(
        l_{\text{min}}^{\tilde g,\tilde \sigma}
        \Big(
            \kappa l_{\text{min}}^{\tilde g,\tilde \sigma}
            -4\pi^2s a^2(1)
        \Big)
    \Bigg)^{n-1}
    \Bigg(
        \kappa l_{\text{min}}^{\tilde g,\tilde \sigma}
        \big(na^2(1)-1\big)
        -4\pi s a^2(1)
    \Bigg)
    \;\text{Vol}_{g_0}(M).
\end{align}
Similarly, when $\kappa =0$, we obtain the following expression for $P(\inf \mathcal{A}^{\text{mag}}_{\tilde g,\tilde \sigma})$:
\begin{align}
    (-s)^n \Bigg(&\frac{l_{\text{min}}^{\tilde g,\tilde \sigma}}{\pi a^2(1)} \Bigg)^{n-1}n!\pi^{n-1}[a(1)]^{2n-2}
    \;\text{Vol}_{g_0}(M) \\\nonumber = &(-s)^n \Big(l_{\text{min}}^{\tilde g,\tilde \sigma}\Big)^{n-1}n!
    \;\text{Vol}_{g_0}(M)
\end{align}
From Proposition \ref{vol_mag_sys_general}, the right hand side of \eqref{mag_final_ineq_1} has the following expression:
\begin{align}\label{vol_kahler_final_1}
Vol_{\Omega_{g_0,\sigma_0}}(\tilde \Omega_{\tilde g, \tilde \sigma}) =     2n\text{Vol}(B^{2n}) \Big( \text{Vol}_{\tilde g}(M) - \text{Vol}_{g_0}(M) \Big)
\end{align}
where $\text{Vol}(B^{2n})$ is the Euclidean volume of the $2n$-dimensional ball. It has the following expression:
\begin{align}
    \text{Vol}(B^{2n})
=
\frac{\pi^{n}}{(2n-1)!}
\end{align}
Plugging the above expression into \eqref{vol_kahler_final_1} and plugging the resulting expression into \eqref{mag_final_ineq_1} along with \eqref{zoll_poly_final_1}, we obtain the following systolic inequality when $\kappa \neq 0$:
\begin{align}
    \Bigg(
        l_{\text{min}}^{\tilde g,\tilde \sigma}
        \Big(
            \kappa l_{\text{min}}^{\tilde g,\tilde \sigma}
            -&4\pi^2s a^2(1)
        \Big)
    \Bigg)^{n-1}
    \Bigg(
        \kappa l_{\text{min}}^{\tilde g,\tilde \sigma}
        \big(na^2(1)-1\big)
        -4\pi s a^2(1)
    \Bigg) \\\nonumber \leq& \frac{2^{2n+1}\pi^{2n}a^2(1)}{n!(2n-1)!} \Bigg(\frac{\text{Vol}_{\tilde g}(M)}{\text{Vol}_{g_0}(M)}-1\Bigg)
\end{align}
with equality holding if and only if the magnetic system determined by the pair $(\tilde g,\tilde \sigma)$ is Zoll at strength $s$.\\
\\
Similarly, when $\kappa=0$, we obtain the following systolic inequality:
\begin{align}
    \Big(l_{\text{min}}^{\tilde g,\tilde \sigma}\Big)^{n-1} \leq \frac{\pi^n}{(-s)^n\; n!\;(2n-1)! }\Bigg(\frac{\text{Vol}_{\tilde g}(M)}{\text{Vol}_{g_0}(M)}-1\Bigg)
\end{align}
We now observe that if $\text{Vol}_{g_0}(M) = \text{Vol}_{\tilde g}(M)$ then, by \eqref{vol_kahler_final_1}, the following holds:
\begin{align}
    Vol_{\Omega_{g_0,\sigma_0}}(\tilde \Omega_{\tilde g, \tilde \sigma}) =0
\end{align}
Theorem \ref{systolic_ineq} then implies that the local systolic inequality \eqref{mag_final_ineq_1} reduces to the following:
\begin{align}
    \inf \mathcal{A}^{\text{mag}}_{\tilde g,\tilde \sigma} \leq 0
\end{align}
Plugging \eqref{kahler_proof_action_final} into the above equation, we then obtain the following:
\begin{align}
    l_{\text{min}}^{\tilde g,\tilde \sigma} \leq  a^2(1)\pi
\end{align}
with equality holding if and only if the magnetic system determined by the pair $(\tilde g,\tilde \sigma)$ is Zoll at strength $s$.

\section{Appendix: Some facts about differential forms}
\subsection{The size of a $k$-form in terms of its differential}
\begin{proposition}\label{elliptic_bounds_on_1_forms}
Let $(M,g)$ be a smooth closed Riemannian manifold of dimension $n$ and let $\Tilde{\alpha} \in \Omega^k(M)$ for $k < n$. Then, we can find an $\alpha \in \Omega^1(M)$ and a sequence $\{L_{k}\}_{k \in \mathbb{N}} \in \mathbb{R}_{>0}$ such that the following holds:
\begin{enumerate}
    \item $d\alpha = d\Tilde{\alpha}$
    \item $||\alpha||_{C^k} \leq L_k ||d\Tilde{\alpha}||_{C^k}$ $\forall k \in \mathbb{N}_{\geq 1}$. 
\end{enumerate}
\end{proposition}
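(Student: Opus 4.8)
The plan is to manufacture $\alpha$ as the \emph{co-exact} primitive of the exact form $\mu := d\tilde\alpha$ supplied by Hodge theory, and then to bound it by elliptic estimates. (I will write $m$ for the differentiability order in the $C^m$-norms, to keep it distinct from the form degree $k$; note also that the target space should read $\Omega^k(M)$ rather than $\Omega^1(M)$, the case $k=1$ being the one used later.) Let $\Delta = dd^* + d^*d$ be the Hodge--de Rham Laplacian, let $H$ denote orthogonal projection onto the finite-dimensional space of harmonic forms, and let $G$ be the associated Green operator, so that $\mathrm{Id} = H + \Delta G$ and $G$ commutes with $d$ and $d^*$. I would set
\[
\alpha := d^* G\, d\tilde\alpha \in \Omega^k(M).
\]

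First I would check that $\alpha$ has the right differential, which is purely formal. Applying the Hodge decomposition to $\mu$ gives $\mu = H\mu + dd^*G\mu + d^*dG\mu$. Since $\mu = d\tilde\alpha$ is exact it is $L^2$-orthogonal to the harmonic forms, so $H\mu = 0$; and since $d\mu = 0$ while $G$ commutes with $d$, the last term is $d^*G\, d\mu = 0$. Hence $\mu = dd^*G\mu = d\alpha$, i.e. $d\alpha = d\tilde\alpha$, which settles conclusion (1).

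The substance is the estimate. The operator $d^*G$ is a classical pseudodifferential operator of order $-1$ on the closed manifold $(M,g)$; equivalently, $\alpha$ solves the elliptic system $d^*\alpha = 0$, $\Delta\alpha = d^*\mu$, with $\alpha$ orthogonal to the harmonic $k$-forms. I would exploit that an order $-1$ operator gains a full derivative on the Hölder--Zygmund scale: for any fixed $\beta \in (0,1)$ and each integer $m\ge 1$ one has $\|\alpha\|_{C^{m,1-\beta}} \le c\,\|\mu\|_{C^{m-1,1-\beta}}$. Chaining this with the elementary embeddings $C^{m,1-\beta}\hookrightarrow C^m$ and $C^m \hookrightarrow C^{m-1,1-\beta}$ yields
\[
\|\alpha\|_{C^m} \le \|\alpha\|_{C^{m,1-\beta}} \le c\,\|\mu\|_{C^{m-1,1-\beta}} \le L_m\,\|\mu\|_{C^m} = L_m\,\|d\tilde\alpha\|_{C^m},
\]
with $L_m$ depending only on $(M,g)$ and $m$, which is conclusion (2). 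The lower-order term that usually clutters such estimates is handled by the boundedness of the Green operator, $\|\alpha\|_{L^2}\le c\|\mu\|_{L^2}\le c\|\mu\|_{C^0}$, and is absorbed into the right-hand side.

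The main obstacle is precisely this passage to \emph{integer} $C^m$-norms: Schauder-type estimates for $\Delta$ fail on $C^m$ without a loss of regularity, so one cannot bound $\|\alpha\|_{C^m}$ by $\|\mu\|_{C^m}$ through a naive elliptic inequality. What rescues the statement is that $\alpha$ is measured against $\mu = d\tilde\alpha$ rather than against $\tilde\alpha$ itself: the smoothing operator $d^*G$ has order $-1$ and therefore gains exactly one derivative, and this one-derivative slack is what upgrades the fractional Hölder estimate to the honest $C^m$ bound above. I would make the gain rigorous through the mapping properties of pseudodifferential operators on Hölder--Zygmund spaces $C^s_\ast$ (on which an order $-1$ operator sends $C^s_\ast$ to $C^{s+1}_\ast$ for every real $s$), which is cleaner than tracking Schauder constants by hand; alternatively one can interpolate the fixed-order elliptic estimates in $\beta$.
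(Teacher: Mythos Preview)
Your construction of $\alpha$ coincides with the paper's: both take the co-exact part of $\tilde\alpha$ in the Hodge decomposition, which is exactly $d^*G\,d\tilde\alpha$. The difference lies in how the estimate is obtained. The paper routes through Sobolev spaces, invoking Morrey's inequality to pass from $W^{p,2}$ to $C^q$, the Poincar\'e-type inequality for $\alpha\perp\ker\Delta$, and then the identity $\Delta\alpha=\delta d\alpha$ (since $\delta\alpha=0$) together with the local formula $\delta\beta=-\sum_i\nabla_{e_i}\iota_{e_i}\beta$ to land back on $\|d\alpha\|_{C^m}$. Your argument instead uses the mapping properties of the order $-1$ pseudodifferential operator $d^*G$ on H\"older--Zygmund spaces, sandwiched between the embeddings $C^m\hookrightarrow C^{m-1,1-\beta}$ and $C^{m,1-\beta}\hookrightarrow C^m$. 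Your route is arguably cleaner for integer $C^m$-norms: it makes explicit that the one derivative gained by the smoothing operator is exactly what is spent passing between integer and fractional H\"older scales, so no loss occurs, whereas the Sobolev route requires careful index bookkeeping to avoid an apparent derivative loss from the Morrey embedding. Both arguments ultimately rest on the same elliptic input; yours packages it through pseudodifferential calculus, the paper's through Sobolev regularity.
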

\begin{proof}
The idea of the proof is simple, we use the ellipticity of the Hodge Laplacian to obtain the required bounds after using the Hodge decomposition of $\Omega^k(M)$ to find the required $\alpha$.\\
\\
Towards that end, recall that the Hodge Laplacian is a formally self-adjoint elliptic operator defined in the following way(c.f \cite[10.1.22 \& 10.1.29]{Nicolaescu1996LecturesOT}:
\begin{align}\label{Hodge_lap_def}
    H_{\Delta,k}:= \delta_{k+1} \circ d_{k} + d_{k-1}\circ \delta_{k} : \Omega^k(M) \rightarrow \Omega^k(M) 
\end{align}
where $d_k: \Omega^k(M) \rightarrow \Omega^{k+1}(M)$ is the deRahm exterior derivative and $\delta_{k+1}: \Omega^{k+1}(M) \rightarrow \Omega^{k}M)$ it's formal adjoint with respect to the $L^2$-inner product. By the Hodge decomposition theorem \cite{Nicolaescu1996LecturesOT}[Theorem 10.4.30 (3)] we know that $\Omega^k(M)$ has the following orthogonal decomposition:
\begin{align*}
    \Omega^k(\Sigma)= \ker(H_{\Delta,k}) \bigoplus \Ima(d_{k-1}) \bigoplus \Ima (\delta_{k+1})
\end{align*}
We now define $\alpha$ to be the projection of $\Tilde{\alpha}$ onto the third factor in the above direct sum decomposition. This ensures that $d\alpha = d\Tilde{\alpha}$ because, clearly $\ker (H_{\Delta,k})= \ker (d_k) \cap \ker (\delta_{k})$. So, to finish the proof, we need only to establish the bounds in the statement of Proposition \ref{elliptic_bounds_on_1_forms}. \\
\\
Since $\alpha$ is orthogonal to the kernel of $H_{\Delta,k}$, the Poincar\'e inequality (\cite[Lemma 10.4.9]{Nicolaescu1996LecturesOT}) and Morrey's inequality \cite[10.2.36 (d)]{Nicolaescu1996LecturesOT} imply the following bounds:
\begin{align}\label{hodge_proof_chain_1}
   ||\alpha||_{C^q} \leq K_1 ||\alpha||_{W^{q+1,2}} \leq K_2 ||H_{\Delta,k}(\alpha)||_{W^{q+1,2}}
\end{align}
where $W^{q,2}$ is the corresponding Sobolev space and $q,K_1,K_2 \in \mathbb{R}_{>1}$ some constants. Using the Sobolev embedding theorem and the definition of the Hodge Laplacian in \eqref{Hodge_lap_def} we get the following chain of inequalities:
\begin{align}\label{hodge_proof_chian_2}
    ||H_{\Delta,k}(\alpha)||_{W^{q,2}} \leq C_1 ||\delta_{k-1}(d_k(\alpha))||_{C^q} \leq C_1||\Sigma^n_{i=1} \nabla_{e_i}\big(\iota_{e_i} d\alpha\big)||_{C^q} \leq C_2||d\alpha ||_{C^k} 
\end{align}
The last inequality in \eqref{hodge_proof_chian_2} follows from \cite[Chapter 4, Lemma 4.6]{lee2018introduction}. Combining \eqref{hodge_proof_chain_1} and \eqref{hodge_proof_chian_2} we obtained the required bounds on $\alpha$. 
\end{proof}

\subsection{The size of Reeb vector fields of odd-symplectic forms}
\begin{lemma}\label{Reeb_dist_lemma}
Let $(\Sigma,\Omega_0)$ be a closed and oriented odd-symplectic manifold. After picking an $\alpha_0 \in \mathcal{C}(\Sigma,\Omega_0)$, there exists a $C^1$-neighborhood $\mathcal{U} \subset \Omega^2(\Sigma)$ of $\Omega_0$ with the property that for any odd-symplectic form $\Omega \in \mathcal{U}$, $\alpha_0 \in \mathcal{C}(\Sigma,\Omega)$. For any such $\Omega \in \mathcal{U}$, there exists a monotonically increasing function $\sigma: \mathbb{R} \rightarrow \mathbb{R}_{>0}$ the following holds for all $k\geq 1$:
\begin{align}
     \vert\vert R_{\Omega} - R_{0}\vert\vert _{C^{1}} \leq \sigma(\vert\vert \Omega - \Omega_0\vert\vert _{C^{1}}) 
\end{align}
where $R_{\Omega}$ and $R_{0}$ are the Hamiltonian vector fields of $\Omega$ and $\Omega_0$ respectively defined with respect the $1$-form $\alpha_0$. 
\end{lemma}
\begin{proof}
In this proof, we will denote by $2n-1$ for $n\geq 2$ the dimension of $\Sigma$.\\
\\
Consider the following map:
\begin{align}\label{def_of_K_appendix}
    K_{\Omega_0}: TM \rightarrow \Omega^{2n-1}(M) \\\nonumber 
    K_{\Omega_0}(X) \mapsto \iota_X \big[\alpha_0  \wedge \Omega_0^{n-1} \big]
\end{align}
This map is smooth and invertible. We now use this inverse to define a vector field in the following way:
\begin{align*}
    \tilde{R}_{\Omega}:= K^{-1}_{\Omega_0}(\Omega^{n-1})
\end{align*}
We then obtain the following chain of equalities:
\begin{align*}
    \iota_{ \tilde{R}_{\Omega}} \Omega^{n-1} =  \iota_{ \tilde{R}_{\Omega}} K_{\Omega_0} (\tilde{R}_{\Omega}) = 
    \iota_{ \tilde{R}_{\Omega}}  \iota_{ \tilde{R}_{\Omega}} (\alpha_0 \wedge \Omega_0^{n-1}) =0
\end{align*}
The Hamiltonian vector field of the pair $(\alpha_0, \Omega)$ can be written in the following form:
\begin{align*}
    R_{\Omega} = \frac{\tilde{R}_{\Omega}}{\alpha_0(\tilde{R}_{\Omega_0})}  = \frac{K^{-1}(\Omega^{n-1})}{\alpha_0(\tilde{R}_{\Omega_0})}
\end{align*}
This in turn implies the following as required:
\begin{align*}
    \vert\vert R_{\Omega} - R_0\vert\vert_{C^1} = \Big|\Big|\frac{1}{\alpha_0}\Big|\Big|_{C^1} \vert\vert K^{-1}_{\Omega_0}(\Omega^{n-1}) - K^{-1}_{\Omega_0}(\Omega_0^{n-1})\vert\vert_{C^1} \leq \sigma(\vert\vert\Omega - \Omega_0\vert\vert_{C^{1}})
\end{align*}
where the last inequality follows from the smoothness of $K_{\Omega_0}$ defined in \eqref{def_of_K_appendix}.
\end{proof}

\bibliographystyle{alpha}
\bibliography{refs_mag_1}

\end{document}